\numberwithin{equation}{section}
\newtheorem{theorem}{Theorem}[section]
\newtheorem{lemma}[theorem]{Lemma}
\newtheorem{proposition}[theorem]{Proposition}
\newtheorem{corollary}[theorem]{Corollary}
\theoremstyle{definition}
\newtheorem{example}[theorem]{Example}
\newtheorem*{definition}{Definition}
\newtheorem{problem}[theorem]{Problem}
\newtheorem{remark}[theorem]{Remark}
\theoremstyle{remark}
\newenvironment{romenumerate}[1][0pt]{
\addtolength{\leftmargini}{#1}\begin{enumerate}
 }{\end{enumerate}}
\newcounter{oldenumi}
{\setcounter{oldenumi}{\value{enumi}}
\begin{romenumerate} \setcounter{enumi}{\value{oldenumi}}}
{\end{romenumerate}}
\newcounter{thmenumerate}
\newenvironment{thmenumerate}
{\setcounter{thmenumerate}{0}%
 \def\item{\par
 \refstepcounter{thmenumerate}\textup{(\roman{thmenumerate})\enspace}}
}
{}
\newcounter{romxenumerate}   
\newcounter{xenumerate}   
\newcommand\pfitem[1]{\par(#1):}
\newcommand{\refT}[1]{Theorem~\ref{#1}}
\newcommand{\refC}[1]{Corollary~\ref{#1}}
\newcommand{\refL}[1]{Lemma~\ref{#1}}
\newcommand{\refR}[1]{Remark~\ref{#1}}
\newcommand{\refS}[1]{Section~\ref{#1}}
\newcommand{\refP}[1]{Proposition~\ref{#1}}
\newcommand{\refE}[1]{Example~\ref{#1}}
\newcommand{\refand}[2]{\ref{#1} and~\ref{#2}}
\xdef\klockan{\the\count1.0\the\count255}
\xdef\klockan{\the\count1.\the\count255}\fi
\DeclareMathOperator*{\setdiff}{\triangle}
\newcommand{\sumin}{\sum_{i=1}^n}
\newcommand{\sumjn}{\sum_{j=1}^n}
\newcommand\set[1]{\ensuremath{\{#1\}}}
\newcommand\bigset[1]{\ensuremath{\bigl\{#1\bigr\}}}
\newcommand\Bigset[1]{\ensuremath{\Bigl\{#1\Bigr\}}}
\newcommand\bigpar[1]{\bigl(#1\bigr)}
\newcommand\Bigpar[1]{\Bigl(#1\Bigr)}
\newcommand\lrpar[1]{\left(#1\right)}
\newcommand\abs[1]{|#1|}
\newcommand\bigabs[1]{\bigl|#1\bigr|}
\newcommand\Bigabs[1]{\Bigl|#1\Bigr|}
\newcommand\biggabs[1]{\biggl|#1\biggr|}
\newcommand\lrabs[1]{\left|#1\right|}
\def\rompar(#1){\textup(#1\textup)}    
\newcommand\xfrac[2]{#1/#2}
\def\xexp(#1){e^{#1}}
\newcommand\ceil[1]{\lceil#1\rceil}
\newcommand\bigceil[1]{\bigl\lceil#1\bigr\rceil}
\newcommand\nutoo{\ensuremath{{\nu\to\infty}}}
\newcommand\ntoo{\ensuremath{{n\to\infty}}}
\newcommand\itoo{\ensuremath{{i\to\infty}}}
\newcommand\norm[1]{\|#1\|}
\newcommand\upto{\nearrow}
\newcommand\punkt{.\spacefactor=1000}    
\newcommand\iid{i.i.d\punkt}    
\newcommand\ie{i.e\punkt}
\newcommand\eg{e.g\punkt}
\newcommand\cf{cf\punkt}
\newcommand{\as}{a.s\punkt}
\newcommand{\aex}{a.e\punkt}
\newcommand\bbR{\mathbb R}
\newcounter{CC}
\newcounter{cc}
\newcommand\E{\operatorname{\mathbb E{}}}
\renewcommand\P{\operatorname{\mathbb P{}}}
\newcommand\ga{\alpha}
\newcommand\gf{\varphi}
\newcommand\gG{\Gamma}
\newcommand\gO{\Omega}
\newcommand\gs{\sigma}
\newcommand\eps{\varepsilon}
\newcommand\cA{\mathcal A}
\newcommand\cC{\mathcal C}
\newcommand\cF{\mathcal F}
\newcommand\cP{\mathcal P}
\newcommand\cS{{\mathcal S}}
\newcommand\cT{{\mathcal T}}
\newcommand\cW{\mathcal W}
\newcommand\ett[1]{\boldsymbol1\set{#1}}
\newcommand\etta{\boldsymbol1} 
\def\[#1]{[\![#1]\!]}
\newcommand\qw{^{-1}}
\newcommand\qww{^{-2}}
\newcommand\qwww{^{-3}}
\newcommand\qq{^{1/2}}
\renewcommand{\=}{:=}
\newcommand\intoi{\int_0^1}
\newcommand\intoooo{\int_{-\infty}^\infty}
\newcommand\oi{[0,1]}
\newcommand\oii{[0,1]^2}
\newcommand\setoi{\set{0,1}}
\newcommand\dd{\,\textup{d}}
\newcommand\cuoo{\mathcal U_{\infty}}
\newcommand\cum{\mathcal U_{\uparrow}}
\newcommand\cwm{\mathcal W_{\uparrow}}
\newcommand\qm{quasimonotone}
\newcommand\gn{G_\nu}
\newcommand\gnn{(\gn)}
\newcommand\gnx{G_n}
\newcommand\gnxx{(\gnx)}
\newcommand\normll[1]{\norm{#1}_{L^1}}
\newcommand\normlss[1]{\norm{#1}\qliss}
\newcommand\normoo[1]{\norm{#1}_{\infty}}
\newcommand\cn[1]{\norm{#1}\cut}
\newcommand\cnone[1]{\norm{#1}_{\square,1}}
\newcommand\cutnorm{\cn}
\newcommand\cut{_{\square}}
\newcommand\dcut{{\delta_{\square}}}
\newcommand\dcutone{{\delta_{\square,1}}}
\newcommand\dl{{\delta_{1}}}
\newcommand\sss{{\mathcal S}}
\newcommand\sssq{{\mathcal S^2}}
\newcommand\hsss{\widehat{\mathcal S}}
\newcommand\hprec{\widehat{\prec}}
\newcommand\liss{{L^1(\sss^2)}}
\newcommand\lioi{{L^1(\oi^2)}}
\newcommand\qliss{_{L^1(\sss^2)}}
\newcommand\qlis{_{L^1(\sss)}}
\newcommand\qlioi{_{L^1(\oi^2)}}
\newcommand\www{\cW}
\newcommand\bW{\overline W}
\newcommand\hW{\widehat{W}}
\newcommand\ints{\int_\sss}
\newcommand\mpp{measure-preserving}
\newcommand\sign{\operatorname{sign}}
\newcommand\wa{W_A}
\newcommand\wx[1]{W_{#1}}
\newcommand\mW{W_\sss}
\newcommand\ws{\wx{\sss}}
\newcommand\wis{\wx{1,\sss}}
\newcommand\wiis{\wx{2,\sss}}
\newcommand\wisi{\wis}
\newcommand\ops{ordered probability space}
\newcommand\ps{probability space}
\newcommand\pss{probability spaces}
\newcommand\wn[1]{w^{(n)}_{#1}}
\newcommand\sssqq{\sss_1\times\sss_2}
\newcommand\sssp{(\sss,\prec)}
\newcommand\sssmp{(\sss,\mu,\prec)}
\newcommand\precx{\prec^*}
\newcommand\gox{\gO^*}
\newcommand\goxa{\gO^*_0}
\newcommand\goxb{\gO^*_1}
\newcommand\goxx{\widetilde\gO^*}
\newcommand\oivalued{$0/1$-valued}  
\newcommand\de{d_{\mathrm{e}}}
\newcommand\wgn{W_{\gn}}
\newcommand\wgni{W_{\gn'}}
\newcommand\muu{\mu\times\mu}
\newcommand\noproof{\qed} 
\newcommand\be{{\overline e}}
\newcommand\ccc{\mathrm{c}}
\newcommand\gOa{\gO_0}
\newcommand\gOb{\gO_1}
\newcommand\gOc{\gO_2}
\newcommand\gOd{\gO_1'}
\newcommand\gOkb{\gO_1}
\newcommand\gOkc{\gO_2}
\newcommand\wv{W^{V}}
\newcommand{\Lovasz}{Lov\'asz}
\newcommand\REM[1]{{\raggedright\texttt{[#1]}\par\marginal{XXX}}}
\newenvironment{comment}{\setbox0=\vbox\bgroup}{\egroup} 
\newcommand\urladdrx[1]{{\urladdr{\def~{{\tiny$\sim$}}#1}}}
\begin{document}
\title
{Monotone graph limits and quasimonotone graphs}

\date{21 January, 2011}  

\author{B\'ela Bollob\'as}
\address{Department of Pure Mathematics and Mathematical Statistics,
University of Cambridge,
Wilberforce Road, Cambridge CB3 0WB, UK 
and
Department of Mathematical Sciences,
University of Memphis, Memphis TN 38152, USA}
\thanks{The first author's research was supported in part by 
NSF grants  CNS-0721983, CCF-0728928 and DMS-0906634, 
and ARO grant W911NF-06-1-0076.}
\email{b.bollobas@dpmms.cam.ac.uk}

\author{Svante Janson}
\address{Department of Mathematics, Uppsala University, PO Box 480,
SE-751~06 Uppsala, Sweden}
\email{svante.janson@math.uu.se}
\urladdrx{http://www.math.uu.se/~svante/}
\thanks{Part of this research was carried when
SJ visited the Isaac Newton Institute, Cambridge,
during the programme Stochastic Processes in Communication Sciences, 2010}

\author{Oliver Riordan}
\address{Mathematical Institute, University of Oxford, 24--29 St Giles',
  Oxford OX1 3LB, UK} 
\email{riordan@maths.ox.ac.uk}

\subjclass[2000]{05C99} 

\begin{comment}  
05 Combinatorics 
05C Graph theory [For applications of graphs, see 68R10, 90C35, 94C15]
05C05 Trees
05C07 Vertex degrees
05C35 Extremal problems [See also 90C35]
05C40 Connectivity
05C65 Hypergraphs
05C80 Random graphs
05C90 Applications
05C99 None of the above, but in this section 

\end{comment}

\begin{abstract} 
  The recent theory of graph limits gives a powerful framework for
  understanding the properties of suitable (convergent) sequences
  $(G_n)$ of graphs in terms of a limiting object which may be
  represented by a symmetric function $W$ on $[0,1]$, i.e., a {\em kernel} or {\em graphon}.
  In this
  context it is natural to wish to relate specific properties of the
  sequence to specific properties of the kernel. Here we show that the
  kernel is monotone (i.e., increasing in both variables) if and only
  if the sequence satisfies a `quasi-monotonicity' property defined by
  a certain functional tending to zero. As a tool we prove an
  inequality relating the cut and $L^1$ norms of kernels of the form
  $W_1-W_2$ with $W_1$ and $W_2$ monotone that may be of
  interest in its own right; no such inequality holds for general kernels.
\end{abstract}

\maketitle

\section{Introduction}\label{S:intro}

Recently, \citet{LSz} and Borgs, Chayes, Lov\'asz, S\'os and Vesztergombi
(see, e.g.,~\cite{BCLSV1}) developed a rich theory of \emph{graph limits},
associating limit objects to suitable sequences $(\gn)$ of (dense)
graphs with $|\gn|\to\infty$, where $|\gn|$ denotes the number of vertices
of $\gn$. The basics of this theory are outlined in \refS{Slimits} below;
see also \citet{SJ209}.
These graph limits (which are not themselves graphs) can be represented in
  several different ways; perhaps the most important is that every graph
  limit can be represented by a \emph{kernel} (or \emph{graphon}) on \oi,
  \ie, a symmetric measurable function $W:\oi^2\to\oi$. However,
this representation is in general not unique, see
\eg{} \cite{LSz,BCL:unique,SJ209,BR:metrics}.
More generally, kernels can be defined on any probability space, see
  \refS{Slimits}.

We use $\gG$ to denote an arbitrary graph limit, and write $\gG_W$ for the graph
  limit defined by a kernel $W$. We say that two kernels $W$ and $W'$ 
are \emph{equivalent} if they define the same graph limit, \ie, if
$\gG_W=\gG_{W'}$. We write $\gn\to\gG$ when the sequence $\gnn$ converges to
  $\gG$ (see \cite{LSz},  \cite{BCLSV1} and \refS{Slimits} below for
  definitions);  
if $\gG$ is represented by a kernel $W$, \ie, if $\gG=\gG_W$,
  we also  write $\gn\to W$.

Following \cite{SJ209}, we denote the set of all graph limits by $\cuoo$,
and note that $\cuoo$ is a compact metric space. 
Another version of the important compactness property for graph limits
is that every sequence $\gnn$ of graphs with 
$|\gn|\to\infty$ has a convergent subsequence, \ie, a subsequence converging
to some $\gG\in\cuoo$. 

Given a suitable class $\cF$ of graphs, it seems interesting to study
the {\em graph limits of $\cF$}, i.e.,
the set of graph limits arising as limits of sequences of graphs in $\cF$.
One interesting example is
the class of \emph{threshold graphs}, which has several different
characterizations, see \eg\ \cite{MP}. One of them is the monotonicity
property of the neighbourhoods $N(v)$ of the vertices: 
\begin{equation}\label{tg}
\vbox{\hsize=10cm\noindent 
There exists a
(linear) ordering $\prec$ of the vertices such that \\
if $v\prec w$, then 
$N(v)\setminus\set{v,w}\subseteq N(w)\setminus\set{v,w}$.}
\raisetag{2\baselineskip}
\end{equation}

The graph limits of threshold graphs were studied by \citet{SJ238} (see also
\cite{LSz:finitely}), who showed that they
are exactly the graph limits that can be represented by kernels $W$ that
take values in \setoi{} only and are \emph{increasing}, in that
\begin{equation}\label{q2}
W(x_1,y_1)\le W(x_2,y_2)
\qquad \text{if }
0\le x_1\le x_2\le 1,\,
0\le y_1\le y_2\le 1.
\end{equation}
In other words, $W$ is the indicator function of a symmetric increasing subset of
$\oii$. 
(In this paper, `increasing' should always be interpreted in the weak sense,
\ie, as `non-decreasing'.) 
Moreover, the representation by such a $W$ is unique, if, as is usual, we
identify functions that are equal a.e. 

Note that the monotonicity properties in \eqref{tg}
and \eqref{q2} are obviously related; this is perhaps best seen if
\eqref{tg} is rewritten as a monotonicity property of the adjacency matrix
of the graph (with some exceptions at the diagonal), so even without the
detailed technical study in \cite{SJ238}, the condition \eqref{q2} should
not be surprising.

Increasing and decreasing kernels define the same set of graph limits, by the change of
variables $x\mapsto1-x$. Hence we shall talk about \emph{monotone} kernels
rather than increasing kernels, but for simplicity (and without loss of generality)
we consider only increasing ones, so in this paper `monotone' is regarded
as synonymous with `increasing'. 

The main purpose of the present paper is to study the larger class of graph
limits represented by arbitrary monotone kernels (taking any values in
\oi, rather than just the values 0 and 1), and the corresponding sequences of graphs.
We shall also study analytic properties of monotone kernels themselves.
\begin{definition}
  Let $\cwm$ be the set of monotone kernels on $\oi$, \ie,
the set of all symmetric measurable functions $W:\oii\to\oi$ that satisfy \eqref{q2}.

Let $\cum$ be the
corresponding class of graph limits, \ie, the class of graph limits that can
be represented as $\gG_W$ for some $W\in\cwm$. We call these graph limits
\emph{monotone}. 
\end{definition}

By definition, every monotone graph limit can be represented by a monotone kernel $W$
on $\oi$, but note that a monotone graph limit 
may also have many representations by
non-monotone kernels. For example, a monotone kernel can be rearranged by
an arbitrary \mpp{} bijection from $\oi$ to itself, which will
in general destroy monotonicity.

The classes $\cwm$ of monotone kernels and $\cum$ of monotone graph limits
are studied in \refS{Smono}. We show there that $\cwm$ is a compact subset
of $L^1(\oii)$, and that $\cum$ is a compact subset of $\cuoo$. In addition, we
consider monotone kernels defined on other (ordered) probability
spaces, showing that each such kernel is equivalent to a monotone kernel on
$\oi$, so the class $\cum$ is not enlarged by allowing arbitrary \ps{s}.

\begin{definition}
A sequence $(\gn)$ of graphs with $|\gn|\to\infty$ is
\emph{quasimonotone} if it converges to the set $\cum$, in the sense that each
convergent subsequence has as its limit a graph limit in $\cum$. 
In this case we will also say that $(\gn)$ is a sequence of quasimonotone graphs.
\end{definition}
In particular, a sequence $(\gn)$ converging
to a graph limit in $\cum$ is quasimonotone.
Note that it makes no formal sense to ask whether an individual graph
is quasimonotone; just as for quasirandomness, quasimonotonicity
is a property of sequences of graphs.

\begin{example}[Threshold graphs are quasimonotone]
  As noted above, each convergent sequence of threshold graphs converges to
  a limit represented by a \oivalued{} 
kernel $W\in\cwm$. Hence every
  sequence of threshold graphs (with orders tending to $\infty$) is \qm.
\end{example}

\begin{example}[Quasirandom graphs are quasimonotone]
  Quasi\-random graphs were introduced by \citet{Thomason87a,Thomason87b}
as sequences $(\gn)$ of graphs that have certain properties typical of random
graphs. A number of different such properties turn out to be equivalent, and
there are thus many equivalent characterizations, see \citet{ChungGW:quasi}.
Another characterization, found by \citet{LSz},
is that a sequence $(\gn)$ is quasirandom if and only if it converges to a graph limit 
represented by a constant kernel $W(x,y)=p$, for some $p\in\oi$. 
(See also \cite{LSos} and \cite{SJ234}.)
Since a
constant function is monotone, $W\in\cwm$, and thus every 
quasirandom sequence of graphs is \qm.
\end{example}

\begin{example}[Random graphs are quasimonotone]
  The sequence of random graphs  $G(\nu,p)$ 
with some fixed $p\in\oi$ and $\nu=1,2,\dots$ (coupled in
  the natural way for different $\nu$) is \as{}
  quasirandom, and thus \as{} \qm.
\end{example}

Our main result (\refT{TQ} below)
is that \qm{} graphs can be characterized by a weakening of
\eqref{tg}. As is typical for conditions concerning convergence to graph limits,
this weakening involves taking
averages over subsets of the vertex set $V$, rather than imposing
a condition for all vertices, and allows for a small `error',
making the condition asymptotic.

Given a graph $G$ with vertex set $V=V(G)$, a vertex $v$ of $G$ and a subset $A$ of $V$, let
\begin{equation*}
 e(v,A)\=| N(v)\cap A|=|\set{w\in A:w\sim v}|
\end{equation*}
denote the number of edges from $v$ to $A$.

Let $x_+$ denote the {\em positive part} of $x$, i.e., $\max\{x,0\}$.
Writing $n\=|G|=|V|$, given
a (linear) order $\prec$ on $V$ and a subset $A\subseteq V$, define
\begin{align}
  \gOa(G,\prec,A)
&\=
\frac1{n^3}\sum_{v\prec w}\bigpar{e(v,A\setminus\set{w})
 -e(w,A\setminus\set{v})}_+
\label{go1<a1}
\\
&\phantom:=
\frac1{n^3}\sum_{v\prec w}\bigpar{e(v,A\setminus\set{v,w})
  -e(w,A\setminus\set{v,w})}_+
,\label{go1<a2}
\\
  \gOa(G,\prec) &\=\max_{A\subseteq V}   \gOa(G,\prec,A), \text{\ \ and}
\label{go1<}\\
  \gOa(G) &\=\min_{\prec}   \gOa(G,\prec).
\label{go1}
\end{align}
In the last line the minimum is taken over all $n!$ orders on $V$.
The normalization by $n^3$ ensures that $0\le\gOa<1$. In fact, $\gOa<1/2$, and
this bound can be improved further, but this is not important for our purposes
since we are interested in small values of $\gOa$.

Note that $\gOa(G)=0$ if and only if there exists an order $\prec$ such
that $\gOa(G,\prec,A)=0$ for every $A$, \ie, 
$e(v,A\setminus\set{v,w})\le e(w,A\setminus\set{v,w})$ for all $A$ and $v\prec
w$, which easily is seen to be equivalent to \eqref{tg}, giving the following
result.
\begin{proposition}\label{Pgo10}
A graph $G$ is a threshold graph if and only if\/ 
  $\gOa(G)=0$.\noproof
\end{proposition}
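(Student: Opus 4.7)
The plan is to show that the condition $\gOa(G)=0$ unpacks, essentially by tautology, into exactly the monotonicity statement \eqref{tg} defining a threshold graph. The proof should proceed in three easy steps, together with verification of one direction of \eqref{tg} using a cleverly chosen test set $A$.

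First I would observe that the minimum in \eqref{go1} is over the finite set of $n!$ orderings of $V$, so $\gOa(G)=0$ if and only if some order $\prec$ achieves $\gOa(G,\prec)=0$. Since each summand $\bigpar{e(v,A\setminus\set{v,w})-e(w,A\setminus\set{v,w})}_+$ in \eqref{go1<a2} is non-negative, the condition $\gOa(G,\prec)=\max_A\gOa(G,\prec,A)=0$ is equivalent to the statement that
\begin{equation*}
e(v,A\setminus\set{v,w})\le e(w,A\setminus\set{v,w})
\end{equation*}
holds for every $v\prec w$ and every $A\subseteq V$.

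For the forward implication, if $G$ is a threshold graph then, by \eqref{tg}, there exists an order $\prec$ such that $N(v)\setminus\set{v,w}\subseteq N(w)\setminus\set{v,w}$ whenever $v\prec w$. Intersecting both sides with any $A\subseteq V$ and taking cardinalities yields $e(v,A\setminus\set{v,w})\le e(w,A\setminus\set{v,w})$, so by the preceding paragraph $\gOa(G,\prec)=0$, hence $\gOa(G)=0$.

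For the reverse implication, suppose $\gOa(G)=0$ and let $\prec$ be the order witnessing this, so that $e(v,A\setminus\set{v,w})\le e(w,A\setminus\set{v,w})$ for all $A$ and all $v\prec w$. To recover \eqref{tg}, fix $v\prec w$ and any $u\in N(v)\setminus\set{v,w}$, and apply the inequality with $A=\set{u}$: then $e(v,A\setminus\set{v,w})=1$, forcing $e(w,\set{u})\ge 1$, i.e.\ $u\sim w$, so $u\in N(w)\setminus\set{v,w}$. Thus \eqref{tg} holds and $G$ is a threshold graph. There is no real obstacle here: the proof is a direct unpacking of the definitions, with the only mild subtlety being the choice of the singleton test set $A=\set{u}$ in the reverse direction.
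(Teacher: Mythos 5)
Your proof is correct and matches the paper's approach: the paper states the result without a formal proof, justifying it by exactly the same reduction, namely that $\gOa(G)=0$ holds iff some order $\prec$ gives $e(v,A\setminus\set{v,w})\le e(w,A\setminus\set{v,w})$ for all $A$ and $v\prec w$, which is "easily seen" to be equivalent to \eqref{tg}. Your singleton test set $A=\set{u}$ is precisely the natural way to make that last equivalence explicit.
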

Note that $\gOa$ is not intended as a measure of how far a
graph
is from being a threshold graph (for such a measure, see \refS{Sqt}).
Rather, we may think (informally!) of a typical quasimonotone graph
as being similar to a random graph in which edges are independent, and the probability
$p_{ij}$ of an edge $ij$ is increasing in $i$ and in $j$. In such a graph,
one cannot expect the neighbourhoods of different vertices to be even approximately
nested. But one can expect that for all `large' sets $A$ of vertices,
for most $i<j$, $e(i,A)$ will be smaller than (or at least not much larger than)
$e(j,A)$. The idea is that a small value of $\gOa(G)$ detects
this phenomenon, without relying on any given labelling of the vertices.

Some variations of the functional $\gOa$ will be defined
in \refS{SB}, where we shall show that they are asymptotically equivalent
for our purposes.

Our main result is the following, proved 
in \refS{SpfTQ}.
(All unspecified limits in this paper are taken as \nutoo.)
\begin{theorem}
  \label{TQ}
Let $(\gn)$ be a sequence of graphs with $|\gn|\to\infty$. Then $(\gn)$ is
\qm{} if and only if\/ $\gOa(\gn)\to0$.
\end{theorem}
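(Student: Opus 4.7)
My plan is to prove \refT{TQ} by lifting the functional $\gOa$ to the space of graph limits, where it becomes a nonnegative, cut-norm-continuous functional that vanishes precisely on $\cum$, and then argue by compactness. For a kernel $W$ on $\oii$ and a measurable set $A\subseteq\oi$ I would introduce the continuous analog
\begin{equation*}
  \omega(W,A)\=\int\!\!\int_{x<y}\bigpar{f_A(x)-f_A(y)}_+\,dx\,dy,
  \qquad f_A(x)\=\int_A W(x,z)\,dz,
\end{equation*}
together with $\gOa(W)\=\sup_A\omega(W,A)$ and a rearrangement-invariant version $\widetilde\gOa(W)$ obtained by taking an infimum over kernels equivalent to $W$. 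A direct calculation translating sums over pairs of vertices of $\gn$ into integrals over the cells of the step kernel $W_{\gn}$ yields $\gOa(\gn)=\widetilde\gOa(W_{\gn})+O(1/|\gn|)$, reducing the theorem to the corresponding statement at the level of kernels and graph limits.

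I would then establish two analytic facts about $\gOa$. First, $W\mapsto\gOa(W)$ is Lipschitz in the cut norm: the pointwise bound $|(a-b)_+-(a'-b')_+|\le|a-a'|+|b-b'|$ reduces matters to $\sup_A\int|f_A^W-f_A^{W'}|$, and splitting by the sign of $f_A^W-f_A^{W'}$ shows the latter is at most $2\cn{W-W'}$. Second, $\gOa(W)=0\iff W\in\cwm$: the easy direction is immediate, while for the other, $\omega(W,A)=0$ for every $A$ forces $\int_A W(x,\cdot)\le\int_A W(y,\cdot)$ for $x<y$, and choosing $A=\set{z:W(x,z)>W(y,z)+\eps}$ yields pointwise monotonicity in the first variable and, by symmetry, in the second. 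Together these show that $\widetilde\gOa$ is a nonnegative functional on $\cuoo$ vanishing exactly on $\cum$, and the forward direction of \refT{TQ} follows at once: if $(\gn)$ is quasimonotone, each subsequential limit $\Gamma$ lies in $\cum$, admits a monotone representative $W\in\cwm$, and one can relabel vertices of $\gn$ so that its step kernel is cut-norm close to $W$; Lipschitzness then yields $\gOa(\gn)\to0$ along the subsequence, and hence along the whole sequence.

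The converse direction is where the real work lies. If $\gOa(\gn)\to0$, take a convergent subsequence with limit $\Gamma$ and orderings of each $V(\gn)$ almost minimizing $\gOa(\gn,\prec)$; the resulting step kernels $V_\nu$ satisfy $\gOa(V_\nu)\to0$ and still represent the same graph limits as the $\gn$. The obstacle is that we need to produce a \emph{specific} kernel representing $\Gamma$ that is monotone: cut-metric convergence $V_\nu\to\Gamma$ only supplies rearrangements of $V_\nu$ close to \emph{some} representative of $\Gamma$, whereas we need a convergent sequence of genuinely monotone kernels. This is precisely where the cut--$L^1$ inequality for differences of monotone kernels (advertised in the abstract) enters the argument: it permits one to upgrade the near-monotonicity $\gOa(V_\nu)\to0$ into cut-norm proximity of each $V_\nu$ to a genuine monotone kernel $V_\nu^\ast\in\cwm$ (obtained, e.g., by sorting rows by row sum), after which the $L^1$-compactness of $\cwm$ established in \refS{Smono} produces a limit $V^\ast\in\cwm$ representing $\Gamma$, placing $\Gamma\in\cum$ and completing the proof.
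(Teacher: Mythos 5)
Your overall architecture (lift $\gOa$ to kernels, show a zero-set characterization, and use compactness plus a continuity property) is the right shape, and your forward direction is essentially sound: relabelling so that $\cn{W_{\gn}-W}\to0$ for a monotone representative $W$, plus the cut-norm Lipschitz bound for the order-fixed functional (the paper's \refL{LD1}(iii)) and the graph--kernel identity for vertex-induced orders (\refL{LD2}), does give $\gOa(\gn)\to0$. But the converse contains a genuine gap exactly at the step you flag as "where the real work lies". Your plan is to upgrade $\gO_1(V_\nu,<)\to0$ into cut-norm proximity of $V_\nu$ to an honest monotone kernel $V_\nu^\ast\in\cwm$, citing \refT{TV3} and suggesting $V_\nu^\ast$ be obtained by sorting rows by row sums. \refT{TV3} cannot do this: its hypothesis is that \emph{both} kernels are monotone, so it says nothing about a kernel that is merely "nearly monotone" in the $\gO$ sense; and sorting by row sums does not produce a monotone kernel (sort $K_{m,m}$ by degrees, for instance), nor is it clear that the sorted kernel is cut-close to one. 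The statement you need --- that small $\gO_1(\cdot,<)$ forces small cut-distance to $\cwm$ --- is true, but every proof I can see of it runs through precisely the machinery the paper builds for this purpose and which your outline omits: the \emph{symmetrized} functional $\gOc$/$\gOkc$, the fact that ordering by degrees/marginals is exactly optimal for it (\refL{LB3X}, \refL{LA3}), the lexicographic-order comparison across couplings (\refL{LX}), and the resulting $\dcut$-continuity (\refL{Ljeppe}), after which one passes to the limit and applies the zero-case characterization \refT{TM2}. The paper's \refR{Rjeppe} shows the unsymmetrized functional you work with is \emph{not} $\dcut$-continuous in general, so the symmetrization is not a cosmetic choice.

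A secondary problem: your reduction step asserts $\gOa(\gn)=\widetilde\gOa(W_{\gn})+O(1/|\gn|)$, where $\widetilde\gOa$ takes an infimum over all kernels equivalent to $W_{\gn}$. A direct cell-by-cell computation only handles orders induced by vertex orderings, i.e.\ it gives $\widetilde\gOa(W_{\gn})\le\gOa(\gn)+O(1/|\gn|)$; the reverse inequality is exactly what \refR{RD3} says the authors could not prove beyond a factor $2$ (and $K_{m,m}$ with $m$ odd shows strict inequality does occur). Fortunately you only ever use the easy direction, so this is a misstatement rather than a load-bearing error; likewise your derivation of "$\gOa(W)=0\iff W\in\cwm$" by choosing $A$ depending on the pair $(x,y)$ has an a.e.\ issue (the null set depends on $A$), which the paper's \refT{TM} circumvents by averaging over sets and step approximations. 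The essential missing ingredient, however, is a correct substitute for \refL{Ljeppe} and \refT{TM2} in the converse direction.
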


We state a special case separately.

\begin{theorem}
  \label{TQ1}
Let $(\gn)$ be a sequence of graphs with $|\gn|\to\infty$, and suppose that 
$(\gn)$ is convergent, i.e., $\gn\to \gG$ for some graph limit $\gG\in\cuoo$.
Then $\gG\in\cum$ if and only if\/ $\gOa(\gn)\to0$.
\end{theorem}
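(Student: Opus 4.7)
The theorem is an immediate consequence of \refT{TQ}: by definition $(\gn)$ is quasimonotone exactly when every convergent subsequence has its limit in $\cum$, which for a sequence already converging to $\gG$ just reads $\gG\in\cum$. I therefore sketch the proof of \refT{TQ}, whose two directions are rather different in flavour.

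For the easier direction $\gG\in\cum\Rightarrow\gOa(\gn)\to0$, take a monotone representative $W\in\cwm$ of $\gG$. The natural plan is to exhibit good orderings $\prec_n$ on $V(\gn)$ through a sampling picture: the random graph $H_\nu:=G(\nu,W)$ on vertices $X_1,\dots,X_\nu$ drawn i.i.d.\ uniformly on $\oi$ and ordered by the $X_i$'s has edge probabilities $W(X_i,X_j)$ monotone in both coordinates, so a standard concentration argument yields $\gOa(H_\nu,\prec)\to 0$ almost surely. To transfer this to a deterministic $\gn\to\gG$, I would express $\gOa(G,\prec)$, up to $O(1/n)$ terms from the excluded vertices $\{v,w\}$, as a discrete analogue of the kernel functional
\begin{equation*}
D(W) := \sup_{A\subseteq\oi} \int\int_{x<y} \left(\int_A\bigl(W(x,z)-W(y,z)\bigr)\dd z\right)_+ \dd x\,\dd y ,
\end{equation*}
for which the elementary Lipschitz bound $|D(W)-D(W')|\le 2\normll{W-W'}$ holds. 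Since $W\in\cwm$, the paper's main inequality upgrades the cut-distance convergence of the step-kernels to $L^1$-convergence, so $D$ propagates and $\gOa(\gn,\prec_n)\to 0$.

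For the harder direction, fix for each $n$ a nearly optimal order $\prec_n$ and represent $\gn$ by the step-kernel $W_n$ on $\oi$ placing vertices in equal intervals in the order $\prec_n$; the hypothesis then becomes $D(W_n)\to 0$. Observe that $D(W)=0$ forces $W$ to be monotone almost everywhere: testing against a countable dense family of measurable sets yields, for a.e.\ $x<y$, that $\int_A(W(x,z)-W(y,z))\dd z\le 0$ for every $A$, and taking $A=\{z:W(x,z)>W(y,z)\}$ gives $W(x,z)\le W(y,z)$ for a.e.\ $z$. The goal is therefore to extract a subsequential $L^1$-limit $W^*\in\cwm$ of $(W_n)$ that represents $\gG$: then $D(W^*)=0$ by the Lipschitz bound, so $W^*$ is monotone and $\gG=\gG_{W^*}\in\cum$.

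The main obstacle is this last step. Cut-distance convergence of $(W_n)$ to a representative of $\gG$ permits arbitrary measure-preserving rearrangements, which could destroy the natural order upon which $D$ depends. The plan is to construct for each $n$ a monotone surrogate $\tilde W_n\in\cwm$ with $\cn{W_n-\tilde W_n}$ small (possible because $D(W_n)\to 0$), invoke compactness of $\cwm$ together with the paper's cut/$L^1$-equivalence on $\cwm$ to extract an $L^1$-limit $W^*\in\cwm$ along a subsequence, and finally use $\cn{W_n-\tilde W_n}\to 0$ to conclude that $W^*$ represents $\gG$ in cut distance, giving $\gG\in\cum$.
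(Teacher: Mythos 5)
Your reduction of \refT{TQ1} to \refT{TQ} is legitimate (it reverses the paper's logic, which deduces \refT{TQ} from \refT{TQ1} by passing to subsequences, but that is harmless); the problem is that your sketch of \refT{TQ} has a genuine gap in each direction. In the direction $\gG\in\cum\Rightarrow\gOa(\gn)\to0$, the step ``the paper's main inequality upgrades the cut-distance convergence of the step-kernels to $L^1$-convergence'' fails: \refT{TV3} requires \emph{both} kernels to be monotone, and $\wgn$ is not. Indeed no such upgrade can exist: if $(\gn)$ is quasirandom with limit the constant (hence monotone) kernel $W\equiv\tfrac12$, then $\dcut(\wgn,W)\to0$ while $\normll{\wgn-W}=\tfrac12$ for every $\nu$, so your $L^1$-Lipschitz bound on $D$ is never fed a quantity tending to $0$. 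What makes this direction work is that the functional is Lipschitz in the \emph{cut norm} for a fixed order (\refL{LD1}(ii),(iii): the set $A$ and the sign of the inner integral play the role of the test functions in \eqref{cutnorm}), combined with a device for choosing the order on $V(\gn)$ compatibly with the rearrangement hidden in $\dcut$; the paper uses the symmetrized functional $\gOc$, degree sorting (\refL{LB3X}, \refL{LA3}) and the coupling \refL{LX} to obtain \refL{Ljeppe}, after which $\gOc(\gn)=\gOkc(\wgn)\le\gOkc(W)+2\dcut(\wgn,W)\to0$. (Alternatively one can choose labellings with $\cn{\wgn-W}\to0$ as in \cite[Lemma 5.3]{BCLSV1} and then apply \refL{LD1}(iii); your sampling picture for $G(\nu,W)$ does not substitute for either device.)

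In the converse direction, the pivotal step ``construct a monotone surrogate $\tilde W_n\in\cwm$ with $\cn{W_n-\tilde W_n}$ small (possible because $D(W_n)\to0$)'' is asserted, not proved, and it is essentially a stability version of \refT{TM}, i.e.\ the real content of the theorem; as stated it is close to circular. The paper never needs such a statement: it transfers smallness in the opposite direction, using \refL{LD3} and the continuity of $\gOkc$ along the cut metric (\refL{Ljeppe}) to get $\gOkc(W)\le\gOc(\gn)+2\dcut(\wgn,W)\to0$ for a kernel $W$ representing $\gG$, and only then invokes the exact ($=0$) characterizations in \refT{TM} and \refT{TM2}. Note also that this continuity is delicate: it is proved for the symmetrized $\gOkc$, and the analogue for $\gOkb$ can fail in the presence of atoms (\refR{Rjeppe}), so an appeal to ``rearrangement does not matter'' needs justification. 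Your endgame (compactness of $\cwm$ and the cut/$L^1$ equivalence from \refT{Tcwm}, then identifying $W^*$ as a representative of $\gG$) is fine, but without a proof of the surrogate step the argument does not close.
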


We give several results on monotone graph limits in 
Sections \ref{Smono}--\ref{Spfmono2}. These include a characterization 
in terms of a functional $\gO(W)$ for kernels, analoguous to $\gOa$ for
graphs. Along the way we prove some results about monotone kernels
that may be of interest in their own right. For example,
on functions that may be written as the difference between
two monotone kernels, the $L^1$ norm and the cut norm may be
bounded in terms of each other; see \refT{TV3}.

\begin{remark}\label{RLSz}
  \citet{LSz:topology} have studied the class of graph limits represented by 
\oivalued{} kernels (and the corresponding graph properties); with a slight
  variation of their terminology
we call such graph limits \emph{random-free}. 
In contrast to the monotone case, it can be shown that {\em every}
representing kernel of a random-free limit is \aex{}
\oivalued; see \cite{SJ249}.
It follows that the graph
limits that are both monotone and random-free are exactly the threshold
graph limits.
\end{remark}

In \refS{Sqt}, we consider the functional obtained by taking the supremum
over $A$ inside the sum in \eqref{go1<a1} instead of outside as in
\eqref{go1<}. We shall show that this stronger functional characterizes
convergence to threshold graph limits instead of monotone graph limits;
we call the corresponding sequences of graphs \emph{quasithreshold}.

\subsection{A problem}

The convergence $\gn\to\gG$ of a sequence $(\gn)$ of graphs to a graph limit
$\gG$ can be expressed using the \emph{homomorphism numbers} $t(F,\cdot)$:
$\gn\to\gG$ if and only if
$t(F,\gn)\to t(F,\gG)$ for every fixed graph $F$; see \eg{} 
\cite{LSz},
\cite{BCLSV1},
\cite{SJ209}
for definitions and further results.
In particular, the graph limit $\gG$ is characterized by the family
$(t(F,\gG))_F$.
The
families $(t(F,\gG))_F$ that appear are characterized algebraically by
\citet{LSz}. 

\begin{problem}
  Characterize the families $(t(F,\gG))_F$ that appear for $\gG\in\cum$.
\end{problem}

\medskip

The rest of this paper is organized as follows. In the next section we
review some basic properties of the cut metric that we shall
rely on throughout the paper. In \refS{SB} we introduce some variants
of the functional $\gOa$ for graphs. In \refS{Smono} we define
analogous functionals for kernels and state several key properties;
these are proved in the next two sections, and
then our main results are deduced in \refS{SpfTQ}. Finally,
in \refS{Sqt} we discuss related functionals characterizing quasithreshold graphs.

\section{Kernels and graph limits}\label{Slimits}

We state here some standard definitions and results that we shall use
later in the paper. For proofs and further details, see \eg{} \citet{BCLSV1},
\citet{BR:metrics}, or \citet{SJ210,SJ249}.

Let $(\sss,\cF,\mu)$ be a probability space;
for simplicity, we will usually abbreviate the notation to $\sss$ or
$(\sss,\mu)$.

A \emph{kernel} (or \emph{graphon}) on $\sss$ is a symmetric measurable
function $\sssq\to\oi$. We let $\www(\sss)$ denote the set of all kernels on
$\sss$. 

If $W$ is an integrable function on $\sssq$, we define its 
\emph{cut norm} 
by 
\begin{equation}\label{cutnorm}
 \cn W \= \sup_{\normoo{f},\normoo{g}\le1}
  \Bigabs{\int_{\sss^2} W(x,y)f(x)g(y)\dd\mu(x)\dd\mu(y)},
\end{equation}
where $\normoo \cdot$ denotes the norm in $L^\infty$.
In other words, the supremum in \eqref{cutnorm}
is taken over all (real-valued) functions $f$ and $g$ with
values in $[-1,1]$.
(Several other versions exist, which are equivalent within constants.)
By considering the supremum over $f$ with $g$ fixed,
and vice versa, it is easy to see that the supremum is unchanged
if we restrict $f$ and $g$ to take values in $\{\pm 1\}$, so we have
\begin{equation}\label{cutnormpm}
 \cn W = \sup_{f,g:\sss\to\{\pm 1\}}
  \Bigabs{\int_{\sss^2} W(x,y)f(x)g(y)\dd\mu(x)\dd\mu(y)}.
\end{equation}
This norm defines a metric $\cn{W_1-W_2}$ for kernels on the same probability
space $\sss$; as usual, we identify kernels that are equal a.e.

The cut norm may be used to define another (semi)metric $\dcut$, the
\emph{cut metric},
as  follows.
If $\gf:\sss_1\to\sss_2$ is a \mpp{} map between two probability
spaces and $W$ is a kernel on $\sss_2$, we let $W^\gf$ be the kernel on
$\sss_1$ defined by $W^\gf(x,y)\=W\bigpar{\gf(x),\gf(y)}$.
Let $W_1$ be a kernel on a \ps{} $\sss_1$ and $W_2$ a kernel on a possibly
different \ps{} $\sss_2$. Then
\begin{equation}\label{dcut}
\dcut(W_1,W_2)
\=\inf_{\gf_1,\gf_2}\cutnorm{W_1^{\gf_1}-W_2^{\gf_2}},
\end{equation}
where the infimum is taken over all \emph{couplings} $(\gf_1,\gf_2)$ of $\sss_1$ and
$\sss_2$, \ie, over all pairs of \mpp{} maps $\gf_1:\sss_3\to\sss_1$ and
$\gf_2:\sss_3\to\sss_2$ from a third \ps{} $\sss_3$.
It is not difficult to verify that $\dcut$ satisfies the triangle
inequality (see \eg{} \cite{SJ249}), but note that $\dcut(W_1,W_2)$ may be 0
even if $W_1\neq W_2$, 
for example if $W_1=W_2^\gf$ for some \mpp{}
$\gf:\sss_1\to\sss_2$. Hence, $\dcut$ is really a semimetric (but is
usually called a metric for simplicity).

Note that $\dcut(W_1,W_2)$ is defined for kernels on different
spaces. Moreover, it is invariant under \mpp{} maps: 
$\dcut(W_1^{\gf_1},W_2^{\gf_2})=\dcut(W_1,W_2)$ for any \mpp{} maps
$\gf_k:\sss'_k\to\sss_k$, $k=1,2$. 

Although we allow couplings $(\gf_1,\gf_2)$ defined on an arbitrary third
space $\sss_3$, in \eqref{dcut} it suffices to consider the case when
$\sss_3=\sss_1\times\sss_2$, with a measure $\mu$ having marginals $\mu_1$
and $\mu_2$, taking for $\gf_1$ and $\gf_2$ the
projections $\pi_k:\sss_1\times\sss_2\to\sss_k$, $k=1,2$. In fact, for an
arbitrary coupling $(\gf_1,\gf_2)$ defined on a space $(\sss_3,\mu_3)$, the
mapping
$(\gf_1,\gf_2):\sss_3\to\sss_1\times\sss_2$ maps $\mu_3$ to a measure $\mu$
on $\sss_1\times\sss_2$ with the right marginals, and it is easily seen that 
$\cn{W_1^{\gf_1}-W_2^{\gf_2}}=\cn{W_1^{\pi_1}-W_2^{\pi_2}}$.

Although this will be of much lesser importance, we also define
the corresponding
rearrangement-invariant version of the $L^1$ distance:
\begin{equation}\label{dl}
\dl(W_1,W_2)\=\inf_{\gf_1,\gf_2}\norm{W_1^{\gf_1}-W_2^{\gf_2}}_{L^1(\sss_3^2)}.
\end{equation}

The coupling definition \eqref{dcut} of the cut metric
is valid for all $\sss_1$ and $\sss_2$,
but in common special cases it is possible, and often convenient, to use other,
equivalent, definitions.
For example, if $\sss_1=\sss_2=\oi$ (equipped with the Lebesgue
measure, as always), then
as shown by \citet[Lemma 3.5]{BCLSV1},
\begin{equation}\label{dcut2}
\dcut(W_1,W_2)
\=\inf_{\gf}\cutnorm{W_1-W_2^{\gf}},
\end{equation}
taking the infimum  over all \mpp{} bijections $\oi\to\oi$.

We say that two kernels $W_1$ and $W_2$ are \emph{equivalent} if
$\dcut(W_1,W_2)=0$. 
The set of equivalence classes is thus a metric space with the metric
  $\dcut$. 
A central result \cite{LSz,BCLSV1} is that these equivalence classes are in
one-to-one correspondence with the graph limits.
In other words, each kernel $W$ defines a graph limit $\gG_W$, every graph
limit can be represented by a kernel in this way, and two kernels define the same
graph limit if and only if they are equivalent.
Thus, the cut metric defines the same notion of equivalence as the one 
mentioned in the introduction.
Furthermore, $W_1$ and $W_2$ are equivalent if and only if $\dl(W_1,W_2)=0$, see
e.g.\ \cite{SJ249}.

Every kernel is equivalent to a kernel on $\oi$, so it suffices to
consider such kernels. (We shall not use this restiction in the present
paper, however.)

One manifestation of the connection between graph limits and kernels is the
following:
If $G$ is a graph with vertices labelled 1,2,\dots,$n$, let
$A_G(i,j)\=\ett{i\sim j}$ define its adjacency matrix, and let
\begin{equation*}
  W_G(x,y)\=A_G\bigpar{\ceil{nx},\ceil{ny}}.
\end{equation*}
This defines a kernel $W_G$ on $\oi$ (or rather on $(0,1]$, which is equivalent).
A sequence of graphs with $|\gn|\to\infty$ converges to the graph limit
$\gG=\gG_W$ if and only if $\dcut(W_{\gn},W)\to0$.

Note that $W_G$ depends on the labelling of the vertices of $G$, but only in
a rather trivial way, and different labellings yield equivalent kernels.
Here, in the study of monotone kernels, the ordering is relevant.
If $G$ is a graph with a given order $\prec$ on $V$, we therefore
define $W_G=W_{G,\prec}$ as above, but using the labelling of the vertices 
with $1\prec2\prec\cdots$, ignoring the original labelling, if any.

\section{Further measures of quasimonotonicity}\label{SB}

In \refS{S:intro} we defined a functional $\gOa$ that measures,
in an averaged sense, how far the adjacency matrix of a graph is from being monotone.
There are several natural variations of the definition; we shall concentrate on
two.

Firstly, in \eqref{go1<a1} and \eqref{go1<a2}, we were careful to exclude $v$ and $w$
from the set $A$; this had the advantage of making $\gOa(G)$ exactly zero when $G$
is a threshold graph. But most of the time it is more convenient not to do this.
Instead, we consider
\begin{equation}\label{go3<a}
  \gOb(G,\prec,A) \=
\frac1{n^3}\sum_{v\prec w}\bigpar{e(v,A) -e(w,A)}_+,
\end{equation}
which differs from \eqref{go1<a2} in that we count all edges
into $A$, and not just the edges into $A\setminus\set{v,w}$. This changes
each edge count by at most 1, so
\begin{equation}\label{goi}
| \gOa(G,\prec,A)-  \gOb(G,\prec,A)| <1/n.
\end{equation}

As in \eqref{go1<} and \eqref{go1}, we set
\begin{align}
  \gOb(G,\prec) &\=\max_{A\subseteq V}   \gOb(G,\prec,A), \text{\ \ and}
\label{goj<}
\\
  \gOb(G) &\=\min_{\prec}   \gOb(G,\prec).
\label{goj}
\end{align}

Before turning to our second variant, let us note a basic property of $\gOa$.
Let $\be(v,A)$ denote the number of edges from $v$ to $A$ in the complement
$G^\ccc$ of $G$. If $v\notin A$, then $\be(v,A)=|A|-e(v,A)$.
Hence, for any $v$, $w$ and $A$,
\begin{equation*}
 \be(w,A\setminus\set{v,w})-\be(v,A\setminus\set{v,w}) = 
 e(v,A\setminus\set{v,w})-e(w,A\setminus\set{v,w}).
\end{equation*}
From \eqref{go1<a2} it follows that $\gOa(G^\ccc,\succ,A)=\gOa(G,\prec,A)$,
where, naturally, $\succ$ denotes the reverse of the order $\prec$.
Thus $\gOa(G^\ccc,\succ)=\gOa(G,\prec)$ and $\gOa(G^\ccc)=\gOa(G)$.

For $\gOb$ one can show similarly, or deduce using \eqref{goi}, that $|\gOb(G^\ccc)-\gOb(G)|\le 2/n$,
say.

Despite the above symmetry property of $\gOa$,
the following `locally symmetrized'
version of the definition turns out to have technical advantages.
Given a graph $G$, an order $\prec$ on $V(G)$, and $A\subseteq V(G)$, set
\begin{equation}\label{goj'}
 \gOc(G,\prec,A)\=\gOb(G,\prec,A)+\gOb(G,\prec,V\setminus A),
\end{equation}
\begin{equation}\label{goj'a}
 \gOc(G,\prec)\=\max_{A\subseteq V} \gOc(G,\prec,A)
\end{equation}
and
\begin{equation}\label{goj'b}
  \gOc(G)\=\min_\prec \gOc(G,\prec).
\end{equation}
Of course, we could define a corresponding symmetrization of $\gOa$, but we shall not bother.

It is easily seen that all our functionals $\gO_j$ take values in 
$[0,1]$ (in  fact, in $[0,\frac12)$).
We have the following relations.

\begin{lemma}\label{LB1}
If\/ $G$ is a graph with $|G|=n$, then
\begin{equation}\label{lb1a}
| \gOa(G)-  \gOb(G)| <1/n, 
\end{equation}
and
\begin{equation}\label{goj'd}
  \gOb(G)\le\gOc(G)\le2\gOb(G).
\end{equation}
Consequently, if $\gnn$ is a sequence of graphs with $|\gn|\to\infty$, then
$\gO_j(\gn)\to0$ for some $j$ if and only if this holds for all $j=0,1,2$.
\end{lemma}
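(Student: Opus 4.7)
The plan is to obtain all three claims from elementary manipulations of max and min, starting from pointwise estimates on $\gO_j(G,\prec,A)$ and then lifting them to $\gO_j(G)$.

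For \eqref{lb1a}, I would start from the pointwise bound \eqref{goi}, which gives $|\gOa(G,\prec,A) - \gOb(G,\prec,A)| < 1/n$ for every choice of $\prec$ and $A$. Since the maximum of two bounded functions on a common domain differs by at most the pointwise $L^\infty$ difference, taking $\max_{A\subseteq V}$ preserves this, so $|\gOa(G,\prec) - \gOb(G,\prec)| < 1/n$. Applying the same observation to $\min_\prec$ then yields \eqref{lb1a}.

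For \eqref{goj'd}, observe that $\gOb(G,\prec,A') \ge 0$ for every $A'\subseteq V$, so
\begin{equation*}
\gOc(G,\prec,A) = \gOb(G,\prec,A) + \gOb(G,\prec,V\setminus A) \ge \gOb(G,\prec,A).
\end{equation*}
Taking $\max_A$ gives $\gOc(G,\prec) \ge \gOb(G,\prec)$, and then $\min_\prec$ gives $\gOc(G) \ge \gOb(G)$. For the upper bound, both $\gOb(G,\prec,A)$ and $\gOb(G,\prec,V\setminus A)$ are at most $\gOb(G,\prec)$ by definition, so $\gOc(G,\prec,A) \le 2\gOb(G,\prec)$ for every $A$; taking $\max_A$ and then $\min_\prec$ yields $\gOc(G) \le 2\gOb(G)$.

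Finally, the consequence follows immediately: \eqref{lb1a} shows that $\gOa(\gn)\to 0$ iff $\gOb(\gn)\to 0$ (since $|\gn|\to\infty$ implies $1/|\gn|\to 0$), and \eqref{goj'd} shows that $\gOb(\gn)\to 0$ iff $\gOc(\gn)\to 0$ (both inequalities being multiplicative by a bounded constant). Honestly there is no real obstacle here — the lemma amounts to organising three short comparisons — so the only care needed is to make sure the pointwise estimates are uniform in $\prec$ and $A$ before passing to the max and min.
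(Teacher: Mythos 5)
Your proof is correct and follows essentially the same route as the paper: \eqref{lb1a} is read off from the pointwise bound \eqref{goi} by passing to $\max_A$ and $\min_\prec$, and \eqref{goj'd} comes from the order-wise inequality $\gOb(G,\prec)\le\gOc(G,\prec)\le2\gOb(G,\prec)$, which is exactly the intermediate step the paper records before minimizing over $\prec$. The only extra care you add (finiteness of the sets of $A$ and $\prec$ preserving the strict inequality, and min/max preserving pointwise inequalities) is implicit in the paper's ``immediate'' argument, so there is nothing to change.
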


\begin{proof}
The inequality \eqref{lb1a} is immediate from \eqref{goi}.

The definition \eqref{goj'} implies that
\begin{equation}\label{goj'c}
  \gOb(G,\prec) \le \gOc(G,\prec) \le 2\gOb(G,\prec),
\end{equation}
which in turn implies \eqref{goj'd}.
\end{proof}

\begin{remark}
Instead of summing in \eqref{go1<a2} or \eqref{go3<a},
in analogy with the standard definition of $\eps$-regular partitions 
(see \eg{} \cite[Section IV.5]{B:MGT}), we may
count the number of `bad' pairs $(v,w)$ of
vertices $v\prec w$ where the difference 
$e(v,A) -e(w,A)$ is larger than $\eps n$, for some small $\eps$.
This suggests the following definition:
with $\prec$ an order on the vertex set $V$, $n\=|V|$, and
$A$ a subset of $V$, set
\begin{equation*}
  \gOd(G,\prec,A)
\=
\inf\Bigset{\eps>0:
 \bigabs{
 \bigset{v\prec w :
 e(v,A) >e(w,A)+\eps n
}}
\le\eps n^2},
\end{equation*}
and define $\gOd(G)$ by taking the maximum over $A$ with $\prec$ fixed, and then
minimizing over $\prec$.
It is a standard observation that if $x_1,\ldots,x_a$ take values in $[0,b]$,
then $\sum_i x_i\ge \eps ab$ implies that there are at least $\eps a/2$ of the $x_i$
that are at least $\eps b/2$, and that if at least $\eps a$ of the $x_i$
are at least $\eps b$, then the sum is at least $\eps^2 ab$.
Using this it is easy to check that $\gOb$ and $\gOd$ are bounded by
suitable functions of each other. In fact, it turns out that
\begin{equation*}
  \tfrac12\gOb(G)\le\gOd(G)\le\gOb(G)\qq.
\end{equation*}
We can also define corresponding modifications of the other $\gO_j$.
\end{remark}

\begin{remark}
\refP{Pgo10} says that
a graph $G$ is a threshold graph if and only if  $\gOa(G)=0$.
This does not hold for $\gOb$; in fact, if $G$ contains an
edge $vw$, with $v\prec w$, then 
$\gOb(G,\prec,\set w)\ge n\qwww e(v,\{w\})=n\qwww$ by
\eqref{go3<a}; hence $\gOb(G)\ge n\qwww$ unless $G$ is empty.
Consequently, $\gOb(G)>0$  for every non-empty graph $G$.
On the other hand, \refP{Pgo10} and \refL{LB1} show that
$\gOb(G)\le 1/n$ for every threshold graph.  
\end{remark}

We defined each $\gO_j(G)$ by
taking the minimum of $\gO_j(G,\prec)$ over all possible orderings $\prec$ of
the vertices. As the next lemma shows, for $\gOc$, ordering the vertices by
their degrees $d(v)\=e(v,V)$ (resolving ties arbitrarily) is optimal.
This is the main reason for considering $\gOc$.

\begin{lemma}\label{LB3X}
Let $<$ be an order on $V$ such that $v<w \implies d(v)\le d(w)$.
Then 
$\gOc(G)= \gOc(G,<)$.
\end{lemma}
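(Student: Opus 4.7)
The plan is to show that swapping two adjacent vertices in the order $\prec$ changes $\gOc(G,\prec,A)$ by a simple quantity depending only on the degree difference, so that bubble-sorting by degree only ever weakly decreases $\gOc(G,\prec,A)$ for every $A$ simultaneously.

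\textbf{Key identity.} Fix $A\subseteq V$ and suppose that $v,w$ are adjacent in $\prec$, with $v$ immediately preceding $w$. Let $\prec'$ denote the order obtained by swapping $v$ and $w$, and set
\[
\alpha\=e(v,A)-e(w,A),\qquad \gd\=d(v)-d(w).
\]
Since $e(v,V\setminus A)-e(w,V\setminus A)=\gd-\alpha$, the pair $\{v,w\}$ contributes
$\alpha_++(\gd-\alpha)_+$ to $n^3\gOc(G,\prec,A)$ and $(-\alpha)_++(\alpha-\gd)_+$ to $n^3\gOc(G,\prec',A)$. Using the elementary identity $x_+-(-x)_+=x$ twice,
\[
\bigpar{\alpha_++(\gd-\alpha)_+}-\bigpar{(-\alpha)_++(\alpha-\gd)_+}
=\alpha+(\gd-\alpha)=\gd.
\]
For every third vertex $u$, the relative order of $u$ with each of $v$ and $w$ is unchanged by the swap, so the contribution to $\gOc$ from any pair other than $\{v,w\}$ is identical under $\prec$ and $\prec'$. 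Therefore
\[
\gOc(G,\prec,A)-\gOc(G,\prec',A)=\frac{d(v)-d(w)}{n^3}.
\]

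\textbf{Bubble sort.} Starting from an arbitrary order $\prec$, apply adjacent transpositions as in bubble sort so that after finitely many swaps the order agrees with some $<$ satisfying $v<w\Rightarrow d(v)\le d(w)$; each swap involves an adjacent pair $v,w$ with $v$ before $w$ but $d(v)\ge d(w)$. By the identity above, each such swap changes $\gOc(G,\cdot,A)$ by $-(d(v)-d(w))/n^3\le0$, and this holds \emph{for every $A$ simultaneously}. Consequently, for every $A\subseteq V$,
\[
\gOc(G,<,A)\le \gOc(G,\prec,A).
\]
Taking the maximum over $A$ yields $\gOc(G,<)\le \gOc(G,\prec)$; minimising over $\prec$ gives $\gOc(G,<)\le\gOc(G)$. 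The reverse inequality is immediate from the definition \eqref{goj'b}, so $\gOc(G)=\gOc(G,<)$.

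\textbf{Anticipated difficulty.} There is no serious obstacle; the argument is really the identity $x_++(\gd-x)_+-(-x)_+-(x-\gd)_+=\gd$ combined with the observation that an adjacent transposition affects the contribution of only one pair. The only point worth emphasising is that the bound $\gOc(G,<,A)\le\gOc(G,\prec,A)$ is uniform in $A$, which is precisely what lets us commute the ``sort by degree'' argument with the maximum over $A$ in the definition of $\gOc(G,\prec)$.
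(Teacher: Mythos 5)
Your proof is correct, and it takes a somewhat different route from the paper's, though the heart is the same. The paper argues in one shot: setting $g(v,w,A)=\bigpar{e(v,A)-e(w,A)}_+ +\bigpar{e(v,V\setminus A)-e(w,V\setminus A)}_+$, it proves the inequality $g(v,w,A)\le g(w,v,A)$ whenever $d(v)\le d(w)$, and then compares $\sum_{v<w}g(v,w,A)$ with $\sum_{v\prec w}g(v,w,A)$ directly, re-indexing the pairs on which the two orders disagree, for each fixed $A$. Your adjacent-transposition computation is exactly the sharp form of that pairwise inequality: $g(v,w,A)-g(w,v,A)=\alpha+(\gd-\alpha)=d(v)-d(w)$, so the change under a swap is independent of $A$. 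This is a genuine refinement: it shows that passing between any two orders shifts $\gOc(G,\cdot,A)$ by a quantity determined by the degrees and the inverted pairs alone, uniformly in $A$, whereas the paper only needs (and only proves) an inequality. The bubble-sort packaging buys you this exact bookkeeping, at the cost of an induction over swaps that the paper's one-shot re-indexing avoids; in both arguments the key mechanism is identical, namely that symmetrizing with the complementary set $V\setminus A$ makes the contribution of a pair $\{v,w\}$ controlled by $d(v)-d(w)$ alone. One small point to tighten: as written, you sort $\prec$ into \emph{some} degree-monotone order, which may break ties among equal-degree vertices differently from the given $<$ in the statement. Either bubble-sort towards the specific target $<$ (an adjacent pair that is out of order with respect to $<$ automatically satisfies $d(v)\ge d(w)$ by the hypothesis on $<$), or observe that swapping adjacent vertices of equal degree changes $\gOc(G,\cdot,A)$ by $0$, so all degree-monotone orders give the same value of $\gOc(G,\cdot,A)$ for every $A$; with either remark the argument is complete.
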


\begin{proof}
The inequality $\gOc(G)\le \gOc(G,<)$ is immediate
from the definition \eqref{goj'b}, so it suffices
to prove the reverse inequality.

Let $\prec$ be any order on $V$. If $v<w$, then
$e(v,V)=d(v)\le d(w)=e(w,V)$ and thus,
for
$A\subseteq V$,
\begin{equation}\label{evw}
  \begin{split}
e(v,A)-	e(w,A)
&=
e(v,V)-	e(w,V) + e(w,V\setminus A)-	e(v,V\setminus A)
\\&
\le e(w,V\setminus A)-	e(v,V\setminus A).
  \end{split}
\end{equation}
Let $f(v,w,A)\=\bigpar{e(v,A)-	e(w,A)}_+$ and
$g(v,w,A)\=f(v,w,A)+f(v,w,V\setminus A)$. By \eqref{evw}, if $v<w$,
then
$f(v,w,A)\le f(w,v,V\setminus A)$ and thus
\begin{equation}\label{gvw}
  g(v,w,A)
\le f(w,v,V\setminus A)+f(w,v,A) = g(w,v,A).
\end{equation}

Using \eqref{gvw} for $v<w$ with $v\succ w$, we obtain
\begin{equation*}
  \begin{split}
\gOc(G,<,A)
&\=
\frac1{n^3}\sum_{v< w}g(v,w,A)
\\&
=
\frac1{n^3}\sum_{\substack{v< w\\v\prec w}}g(v,w,A)
+
\frac1{n^3}\sum_{\substack{v< w\\v\succ w}}g(v,w,A)
\\&
\le
\frac1{n^3}\sum_{\substack{v< w\\v\prec w}}g(v,w,A)
+
\frac1{n^3}\sum_{\substack{w> v\\w\prec v}}g(w,v,A)
\\&
=\frac1{n^3}\sum_{v\prec w}g(v,w,A)
= \gOc(G,\prec,A).
  \end{split}
\end{equation*}
Hence, by \eqref{goj'a}, 
$\gOc(G,<)\le \gOc(G,\prec)$. Since $\prec$ is arbitrary,
this yields $\gOc(G,<)=\gOc(G)$.
\end{proof}

As an immediate consequence of Lemmas \refand{LB3X}{LB1}, we have the following
result for $\gOb$.

\begin{corollary}\label{CLB3}
Let $<$ be an order on $V$ such that $v<w \implies d(v)\le d(w)$.
Then 
$\gOb(G)\le \gOb(G,<)\le 2\gOb(G)$.
\end{corollary}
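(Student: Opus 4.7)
The plan is essentially to chain together the two cited lemmas, so the main work is to identify which bounds go in which direction.

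First, the left inequality $\gOb(G) \le \gOb(G,<)$ is immediate from the definition \eqref{goj} of $\gOb(G)$ as a minimum over all orders on $V$, since $<$ is one particular order.

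For the right inequality, I would proceed in three steps. First, apply the left-hand estimate from \eqref{goj'c} (established inside the proof of \refL{LB1}) with $\prec$ replaced by $<$, yielding $\gOb(G,<) \le \gOc(G,<)$. Second, invoke \refL{LB3X}, which asserts exactly that the degree-ordering $<$ is optimal for $\gOc$, so $\gOc(G,<) = \gOc(G)$. Third, apply the right-hand estimate from \eqref{goj'd} of \refL{LB1}, namely $\gOc(G) \le 2\gOb(G)$. Concatenating these gives
\begin{equation*}
\gOb(G,<) \le \gOc(G,<) = \gOc(G) \le 2\gOb(G),
\end{equation*}
as required.

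There is no real obstacle here: the corollary genuinely is a one-line composition of the two preceding lemmas, so the only care needed is to track that \refL{LB3X} gives equality of the $\gOc$ values (so we can pass from the fixed ordering $<$ to the global minimum), while the pointwise comparisons between $\gOb$ and $\gOc$ hold at the level of a single order $\prec$ and can therefore be applied at $\prec = {<}$.
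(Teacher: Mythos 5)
Your proof is correct and is essentially identical to the paper's: the authors also chain $\gOb(G) \le \gOb(G,<) \le \gOc(G,<) = \gOc(G) \le 2\gOb(G)$, citing \eqref{goj'c}, \refL{LB3X} and \eqref{goj'd}. Your bookkeeping of which comparisons hold for a fixed order versus after minimization matches the intended argument exactly.
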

\begin{proof}
By \eqref{goj'c}, \refL{LB3X} and \eqref{goj'd},
\begin{equation*}
 \gOb(G) \le \gOb(G,<) \le \gOc(G,<) = \gOc(G) \le 2\gOb(G).
\end{equation*}
(Alternatively, one can use a simplified version of the proof of \refL{LB3X}.)
\end{proof}

Using a symmetrized version of $\gOa$, or otherwise, it is easy to prove the 
corresponding 
result for $\gOa$.

\begin{remark}
  \label{RBreg}
If $G$ is regular, then any order $<$ satisfies the condition
of \refL{LB3X} and \refC{CLB3}, so these results show that
$\gOc(G,<)$ is the same for all orders, and
$\gOb(G,<)$ is the same for all orders  within a factor of 2;
the latter holds also for $\gOa$.
\end{remark}

The factor 2 in \refC{CLB3} is
annoying but not really harmful for our purposes.
It is best possible, as shown
by the following example.

\begin{example}\label{EB}
Consider a balanced complete bipartite graph $G=K_{m,m}$ (so $n=2m$), with
bipartition $(V_1,V_2)$.
Given an order $\prec$ on the vertex set $V_1\cup V_2$, let 
$N_{ij}\=\bigabs{\bigset{(x,y)\in V_i\times V_j: x\prec y}}$.
Note that
\begin{equation}  \label{n12+n21}
N_{12}+N_{21}=\bigabs{V_1\times V_2}=m^2.
\end{equation}

Let $A\subseteq V=V_1\cup V_2$ and let $a_i=|A\cap V_i|$, $i=1,2$.
Then $e(v,A)=a_2$ if $v\in V_1$ and $e(v,A)=a_1$ if $v\in V_2$.
Hence,
\begin{equation}\label{eb+}
  \begin{split}
n^3\gOb(G,\prec,A)
&=\sum_{v\prec w}\bigpar{e(v,A)-e(w,A)}_+	
\\&
=N_{12}(a_2-a_1)_++N_{21}(a_1-a_2)_+.
  \end{split}
\end{equation}
Since $a_1$ and $a_2$ can be freely chosen in \set{0,\dots,m}, we have
$a_1-a_2\in\set{-m,\dots,m}$, and maximizing over $A$ yields
\begin{equation}\label{mN}
  n^3\gOb(G,\prec)=m\max\{N_{12},N_{21}\}.
\end{equation}
If $\prec_1$ is an order with all elements of $V_1$ coming first,
then $N_{12}=m^2$ and $N_{21}=0$, and thus
\begin{equation*}
  \gOb(G,\prec_1)=m^3/n^3=1/8.
\end{equation*}
On the other hand, if $m$ is even and $\prec_2$ is an order which starts
with $m/2$
elements of $V_1$, continues with all of $V_2$, and finishes with the remaining half of $V_1$,
then $N_{12}=N_{21}=m^2/2$, and thus
\begin{equation}\label{1/16}
  \gOb(G,\prec_2)=\tfrac12m^3/n^3=1/16.
\end{equation}
Thus $\gOb(G,\prec_1)=2\gOb(G,\prec_2)$ although $G$ is regular and
\refC{CLB3} applies to every order.

For $\gOa$, the ratio between $\gOa(G,\prec_1)$ and $\gOa(G,\prec_2)$
is $2-O(1/n)$ by \eqref{goi}.

Note that for any order $\prec$, \eqref{n12+n21} implies
$\max\set{N_{12},N_{21}}\ge m^2/2$, and
thus \eqref{mN} yields
\begin{equation}\label{mN2all}
 \gOb(G) \ge n^{-3}m^3/2 = 1/16.
\end{equation}
Consequently, if $m$ is even, then \eqref{1/16} shows that
\begin{equation}
  \label{mN2even}
\gOb(G)=\gOb(G,\prec_2)=1/16
\qquad\text{($m$ even)}.
\end{equation}

On the other hand,
if $m$ is 
odd, then since $N_{12}+N_{21}=m^2$ is odd,
for any order $\prec$ we have $\max\{N_{12},N_{21}\}\ge (m^2+1)/2$,
and this is attained for some $\prec$.
Thus \eqref{mN} now yields
\begin{equation}\label{mN2odd}
 \gOb(G) = n^{-3}m(m^2+1)/2 > 1/16
\qquad\text{($m$ odd)}.
\end{equation}
We thus have
\begin{equation}\label{mN2both}
 \begin{cases}
 \gOb(K_{m,m})
 =  1/16,  & \text{$m$ even},\\
 \gOb(K_{m,m})
=(1+m^{-2})/16
>  1/16,  & \text{$m$ odd}.
 \end{cases}
\end{equation}

For $\gOc$, the situation is simpler.
It follows from \eqref{eb+} that
$n^3\gOb(G,\prec,\allowbreak V\setminus A)=N_{12}(a_2-a_1)_-+N_{21}(a_1-a_2)_-$,
and thus, using \eqref{n12+n21}, 
\begin{equation}
 n^3\gOc(G,\prec, A)
=N_{12}|a_2-a_1|+N_{21}|a_1-a_2|
=m^2|a_1-a_2|.
\end{equation}
Maximizing over $A$ we find $\gOc(G,\prec)=m^3/n^3=1/8$ for every order
$\prec$,
\cf{} \refR{RBreg},
and thus $\gOc(G)=1/8$.

If we modify $G$ by adding a perfect matching inside $V_2$ (assuming $m$ is even)
then every order $<$ satisfying the condition of \refC{CLB3} is of the type
$\prec_1$. The added edges change each $e(v,A)$ by at most 1, and thus each
$\gO_j(G,\prec,A)$ is changed by at most $1/n$. Hence this yields an
example where $\gO_j(G,<)=(2-O(1/n))\gO_j(G)$ for $j=0,1$, for every order $<$ considered in
\refC{CLB3}. 
\end{example}

\section{Monotone kernels and graph limits}\label{Smono}

We begin by extending the definition of monotone kernels to other
probability spaces.

\begin{definition}
  An 
\emph{ordered probability space} $(\sss,\prec )=(\sss,\cF,\mu,\prec )$
  is a  probability space $(\sss,\cF,\mu)$ with a (linear) order $\prec $ that
  is   measurable, \ie, \set{(x,y):x\prec y} is a measurable subset of
  $\sss\times\sss$. 
\end{definition}
Note that it follows that
\set{(x,y):x\succ y} and \set{(x,y):x= y} are measurable.

All orders considered in this paper are assumed to be measurable, even if we
only sometimes say so explicitly. Similarly, we only consider
subsets and functions that are  measurable.

The standard example of an ordered probability space
is $\oi$ with Lebes\-gue measure and the standard
order. $\oi$ is always equipped with these unless we say otherwise.

\begin{definition}
Let $(\sss,\prec)$ be an ordered probability space.
A \emph{monotone kernel} on $(\sss,\prec)$ is a kernel $W:\sssq\to\oi$ such that
\begin{equation}\label{q22}
W(x_1,y_1)\le W(x_2,y_2)
\qquad \text{if }
 x_1\preceq x_2,\,
 y_1\preceq y_2.
\end{equation}
\end{definition}

Let $\cwm(\sss,\prec)$ be the set of monotone kernels on $(\sss,\prec)$,
noting that $\cwm=\cwm(\oi)$. 
We shall prove the following properties of $\cwm(\sss,\prec)$ in
Sections \ref{Spfmono1} and \ref{Spfmono2}.

\begin{theorem}\label{Tcwm}
Let $\sssp$ be an
ordered
probability space.
  \begin{thmenumerate}
	\item\label{tcwm1}
$\cwm(\sss,\prec)$ is a compact subset of $L^1(\sssq)$.
\item\label{tcwm2}
Two kernels in $\cwm(\sss,\prec)$ are equivalent if and only if they are \aex{}
equal. 
\item\label{tcwm3}
The metrics $\normll{W_1-W_2}$, $\dl(W_1,W_2)$, $\cn{W_1-W_2}$,
and $\dcut(W_1,W_2)$ 
are equivalent on $\cwm(\sss,\prec)$, i.e., induce the same topology.
  \end{thmenumerate}
\end{theorem}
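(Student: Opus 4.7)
The plan is to prove the three parts in sequence, with parts~(i) and~(ii) together yielding~(iii) via a soft topological argument; the bulk of the work lies in part~(ii). For part~(i), I would first reduce to the case $\sss=\oi$ via an order- and measure-preserving map from $(\sss,\prec)$ into $\oi$, built from a quantile-type construction (with care at atoms), which identifies $\cwm(\sss,\prec)$ with an $L^1$-closed subspace of $\cwm(\oi)$. On $\oi$, I would invoke a two-variable Helly selection theorem: any uniformly bounded sequence of coordinate-monotone functions on $\oii$ has a subsequence converging pointwise almost everywhere to a monotone limit, and dominated convergence then upgrades this to $L^1$-convergence. Convexity of $\cwm(\oi)$ is immediate, and $L^1$-closedness follows by passage to almost everywhere convergent subsequences.

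For part~(ii), let $W_1,W_2\in\cwm(\sss,\prec)$ be equivalent. The key observation is that the ``degree'' $d_W(x)\=\int_\sss W(x,y)\dd\mu(y)$ of a monotone kernel $W$ is itself an increasing function of $x$ with respect to $\prec$. Equivalent kernels share the same degree distribution on $\oi$, so $d_{W_1}$ and $d_{W_2}$ are monotone functions on $(\sss,\prec,\mu)$ with the same distribution, which (after pushing forward to $\oi$ by the \CDF) forces $d_{W_1}=d_{W_2}$ almost everywhere. To upgrade this to almost everywhere equality of the kernels themselves, I would slice: each level set $\{W>t\}$ is again a monotone kernel and takes only the values $0$ and $1$, \ie{} it is a threshold graphon, and equivalent threshold graphons are known to coincide almost everywhere by the earlier work of \cite{SJ238}. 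The layer-cake formula $W=\int_0^1\mathbf{1}\{W>t\}\dd t$ then reconstructs each $W_i$ from its level sets and yields $W_1=W_2$ almost everywhere.

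For part~(iii), the elementary bounds $\cn{W_1-W_2}\le\normll{W_1-W_2}$ and $\dcut(W_1,W_2)\le\dl(W_1,W_2)\le\normll{W_1-W_2}$ show that the $L^1$-topology is the finest of the four. By~(i), $\cwm(\sss,\prec)$ is $L^1$-compact; by~(ii) combined with these inequalities, each of the three coarser semi-metrics separates points of $\cwm(\sss,\prec)$ and is therefore a genuine metric on it. Since a continuous bijection from a compact space to a Hausdorff space is a homeomorphism, all four topologies must coincide on $\cwm(\sss,\prec)$.

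The main obstacle is the slicing step in~(ii): one has to verify that the equivalence $W_1\sim W_2$ at the graph-limit level really does descend to equivalence of the level sets $\{W_i>t\}$ as threshold graphons for almost every $t$. This is where the given order $\prec$ genuinely bites, because any \mpp{} map witnessing $\dcut(W_1,W_2)=0$ must be forced, by monotonicity in both arguments, to act in an order-compatible way on the essential support of the variation of the kernels — without this, the level-set comparison is not available and one cannot reduce to the threshold case of~\cite{SJ238}.
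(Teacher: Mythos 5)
Your parts (i) and (iii) are fine in outline, and genuinely different from the paper: the paper proves compactness by showing $\cwm(\sss,\prec)$ is closed and totally bounded in $L^1$ (approximating every monotone kernel by step kernels over a partition into $n$ equal-measure downset-differences), and it gets (iii) not by a soft argument but from explicit inequalities; your Helly-selection argument on $\oii$ plus the ``continuous bijection from a compact space to a Hausdorff space'' device would work instead, although your reduction of a general $\sssp$ to $\oi$ as an $L^1$-isometric copy inside $\cwm(\oi)$ is itself nontrivial and needs essentially the downset/step-kernel machinery (Lemmas \ref{Lo} and \ref{LV2}) that the paper builds, with atoms handled by blowing up to $\sss\times\oi$ rather than by a quantile map (which is not \mpp{} at an atom).

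The genuine gap is the one you flag yourself: the slicing step in (ii). From $\dcut(W_1,W_2)=0$ you cannot conclude that the level sets $\set{W_1>t}$ and $\set{W_2>t}$ are equivalent, because thresholding is not continuous in the cut metric, and equivalence does not supply a single \mpp{} map $\gf$ with $W_1=W_2^\gf$ \aex{} (let alone an order-compatible one); in general one only has couplings with cut norm tending to $0$. The only known way to make your slicing work is to first invoke the uniqueness theorem for graph limits (\cite{BCL:unique}, or the coupling form in \cite{SJ249}): after reducing to $\oi$, $\dcut(W_1,W_2)=0$ gives \mpp{} maps $\gs_1,\gs_2:\oi\to\oi$ with $W_1^{\gs_1}=W_2^{\gs_2}$ \aex, whence the level sets are equivalent \oivalued{} monotone kernels and \cite{SJ238} applies; but you do not invoke this, and the heuristic you offer in its place (``the map witnessing $\dcut=0$ is forced to be order-compatible'') is not a correct statement as phrased. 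Note also that your degree-function observation only yields $d_{W_1}=d_{W_2}$ \aex, which is far from equality of the kernels. The paper closes exactly this hole with elementary quantitative estimates instead of a deep uniqueness theorem: the rearrangement inequality (\refL{LW4}) gives $\dl(W_1,W_2)=\normll{W_1-W_2}$ and $\dcut(W_1,W_2)\ge\cn{W_1-W_2}/4$ for monotone kernels (Lemmas \ref{LW6}, \ref{LW6cut}), and \refT{TV3} gives $\normll{W_1-W_2}\le 10\cn{W_1-W_2}^{2/3}$ via step-kernel approximation and the Littlewood--Khintchine--Szarek inequality; part (ii) is then the special case $\dcut=0$, and (iii) follows with explicit moduli rather than by compactness. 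Since these inequalities are also needed later (e.g.\ \refL{LW7}), your route, even if repaired by citing \cite{BCL:unique}, would not replace them.
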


Recall that $\cum$ denotes the set of monotone graph limits, i.e., the
class of graph limits that can be represented as $\gG_W$ for some $W\in\cwm=\cwm(\oi)$.

\begin{corollary}\label{Ccum}
Each monotone graph limit has a
representation as $\gG_W$ for some
$W\in\cwm=\cwm(\oi)$ with $W$
unique up to equality a.e.
Furthermore, there is a homeomorphism between $\cum$ and $\cwm(\oi)$,
regarded as 
a subset of $\lioi$.
\end{corollary}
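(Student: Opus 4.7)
My plan is to deduce \refC{Ccum} directly from \refT{Tcwm}, treating the corollary essentially as a repackaging of that theorem. The only real content beyond \refT{Tcwm} is assembling the homeomorphism, and the only potential subtlety is being careful about what "unique" means (equivalence classes in $L^1$ versus genuine equality).

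First I would dispose of the existence-and-uniqueness clause. Existence is immediate from the definition of $\cum$. For uniqueness, if $W_1,W_2\in\cwm(\oi)$ satisfy $\gG_{W_1}=\gG_{W_2}$, then $W_1$ and $W_2$ are equivalent in the sense of \refS{Slimits}, i.e., $\dcut(W_1,W_2)=0$. By \refT{Tcwm}\ref{tcwm2}, this forces $W_1=W_2$ a.e.\ on $\oii$, giving the asserted uniqueness.

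Next I would construct the homeomorphism. Define
\begin{equation*}
\Phi:\cwm(\oi)\to\cum,\qquad \Phi(W)\=\gG_W,
\end{equation*}
where as usual we identify kernels equal a.e., so that $\cwm(\oi)$ is regarded as a subset of $\lioi$. The map $\Phi$ is well-defined (monotone kernels yield monotone graph limits by the very definition of $\cum$), surjective (again by definition), and injective by the uniqueness step just established. Continuity of $\Phi$ is built into graph-limit theory: the topology on $\cuoo$ is induced by $\dcut$, and \refT{Tcwm}\ref{tcwm3} shows that the $L^1$ metric on $\cwm(\oi)$ is equivalent to the $\cutnorm\cdot$ metric, which in turn dominates $\dcut$; so $L^1$-convergence of monotone kernels entails convergence of the corresponding graph limits.

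Finally, for the homeomorphism I would invoke compactness. By \refT{Tcwm}\ref{tcwm1}, $\cwm(\oi)$ is a compact subset of $\lioi$; since $\cuoo$ is a compact (in particular Hausdorff) metric space, the standard fact that a continuous bijection from a compact space to a Hausdorff space is automatically a homeomorphism finishes the job. Alternatively, one can read the conclusion off directly from \refT{Tcwm}\ref{tcwm3}: on $\cwm(\oi)$ the metrics $\normll{W_1-W_2}$ and $\dcut(W_1,W_2)$ induce the same topology, and $\Phi$ is by construction an isometry for $\dcut$ onto $\cum$, so it is a homeomorphism onto its image $\cum$. The only step requiring care is the injectivity-via-a.e.-equality point, which is exactly what \refT{Tcwm}\ref{tcwm2} supplies and where the monotone structure (as opposed to general kernels, which admit rearrangements by \mpp{} maps) is genuinely used.
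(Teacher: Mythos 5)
Your argument is correct and takes essentially the same route as the paper, whose proof simply declares the corollary immediate from Theorem~\ref{Tcwm} together with the fact that the metric on graph limits is equivalent to $\dcut$ on representing kernels; you have just spelled out the details (uniqueness via part (ii), topology comparison via part (iii), and the compactness of $\cwm(\oi)$ from part (i)). The only minor imprecision is calling $\Phi$ a $\dcut$-isometry, since the metric on $\cuoo$ is only \emph{equivalent} to $\dcut$ on representatives rather than literally equal to it, but your main argument (continuity plus a continuous bijection from a compact space to a Hausdorff space being a homeomorphism) does not rely on that.
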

\begin{proof}
Immediate from \refT{Tcwm} and the fact that the
  metric on the set of graph limits is equivalent to $\dcut$ on the
  corresponding kernels.
\end{proof}

In \refS{S:intro} we defined $\cum$ as the set of graph limits
that can be represented by some $W\in\cwm(\oi)$. The following theorem shows
that we may allow monotone kernels on arbitrary ordered probability spaces
without changing $\cum$,
\ie, 
$$
\cum=\set{\gG: \exists\, \sssp \text{ and } W\in\cwm(\sss,\prec) 
\text{ such that }\gG=\gG_W}.
$$
This version of the definition is perhaps more natural than considering
$\oi$ only; on the other hand, it is often convenient to use $\oi$.

\begin{theorem}\label{Tmono}
Let $\sssp$ be an ordered probability space, and let $W\in\cwm(\sss,\prec)$.
Then there
is a monotone kernel $W'\in\cwm(\oi)$ that is equivalent to $W$.
Equivalently, $\gG_W\in\cum$.
\end{theorem}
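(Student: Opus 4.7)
My plan is to transport $W$ to an equivalent monotone kernel on $\oi$ via the cumulative-distribution function of $\prec$, after first enlarging $\sss$ so that $\mu$ becomes non-atomic. First, I would pass to $\tilde\sss:=\sss\times\oi$ with product measure $\tilde\mu:=\mu\times\lambda$ and lexicographic order $\prec'$: $(x,s)\prec'(y,t)$ iff $x\prec y$, or $x=y$ and $s<t$. The pull-back $\tilde W:=W\circ\pi^{\otimes 2}$ along the measure-preserving projection $\pi\colon\tilde\sss\to\sss$ lies in $\cwm(\tilde\sss,\prec')$ and is equivalent to $W$ (exhibited by the coupling through $\pi$), while $\tilde\mu$ assigns measure zero to every singleton. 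So I may henceforth assume $\mu$ itself is non-atomic. Now define $\Phi\colon\sss\to\oi$ by $\Phi(x):=\mu(\{y\prec x\})$: measurability follows from Fubini on $\{(x,y):y\prec x\}$, and $\Phi$ is non-decreasing in $\prec$. The central technical input is $\Phi_*\mu=\lambda$, which I would verify by a moment computation: for $X\sim\mu$ and iid copies $Y_1,\dots,Y_k$ independent of $X$, non-atomicity kills ties, so
\[
\E\bigsqpar{\Phi(X)^k} = \P(Y_1\prec X,\dots,Y_k\prec X) = \tfrac{1}{k+1}
\]
by exchangeability of $(X,Y_1,\dots,Y_k)$, matching the moments of the uniform distribution on $\oi$.

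Next, disintegrating $\mu=\int\mu^u\,d\lambda(u)$ along $\Phi$ with $\mu^u$ a probability concentrated on $\Phi^{-1}(u)$, I would set
\[
W'(u,v):=\iint W(x,y)\,d\mu^u(x)\,d\mu^v(y).
\]
Symmetry and $\oi$-valuedness are inherited from $W$. Monotonicity is automatic: for $u_1\le u_2$ and $v_1\le v_2$, any representatives $x_i\in\Phi^{-1}(u_i)$, $y_i\in\Phi^{-1}(v_i)$ satisfy $x_1\preceq x_2$ and $y_1\preceq y_2$ (since $\Phi$ is order-preserving), so $W(x_1,y_1)\le W(x_2,y_2)$ by monotonicity of $W$, and averaging preserves the inequality. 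Hence $W'\in\cwm(\oi)$.

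Finally, equivalence would follow from the coupling $(\mathrm{id}_\sss,\Phi)$ on $\sss_3=\sss$, giving
\[
\dcut(W,W') \le \cn{W-W'\circ\Phi^{\otimes 2}},
\]
where $W'\circ\Phi^{\otimes 2}=\E[W\mid\sigma(\Phi)^{\otimes 2}]$. The main obstacle is showing that this conditional expectation equals $W$ $\mu^2$-almost everywhere, which is what makes the cut norm vanish. The key input is a one-dimensional lemma: any bounded monotone function $f\colon\sss\to\mathbb{R}$ on the non-atomic ordered probability space $(\sss,\prec,\mu)$ agrees $\mu$-a.e.\ with $\tilde f\circ\Phi$ for some non-decreasing $\tilde f\colon\oi\to\mathbb{R}$ (intuitively, $\Phi$ encodes the $\prec$-rank of each point under $\mu$, the fibres $\Phi^{-1}(u)$ are $\prec$-convex $\mu$-null sets by non-atomicity of $\Phi_*\mu=\lambda$, and monotonicity of $f$ forces $f$ to depend only on rank up to null variation). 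Applying this coordinatewise to $W$ gives the required $\sigma(\Phi)^{\otimes 2}$-measurability modulo $\mu^2$-null sets, whence $W=W'\circ\Phi^{\otimes 2}$ $\mu^2$-a.e., the cut-norm on the right vanishes, and $W'\in\cwm(\oi)$ is equivalent to $W$, as required.
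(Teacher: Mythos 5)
Your overall strategy (push $W$ forward along the rank function $\Phi(x)=\mu\{y:y\prec x\}$) is sound, and several steps are fine: the reduction to the atomless case, measurability and order-preservation of $\Phi$, the moment computation giving $\Phi_*\mu=\lambda$ (this is exactly the paper's proof of \refL{Lo}), and your one-dimensional lemma, which can indeed be proved by noting that the oscillation intervals of a monotone $f$ over distinct fibres $\Phi^{-1}(u)$ are disjoint, so only countably many fibres carry positive oscillation, and their union is $\mu$-null. The genuine gap is the final step, ``applying this coordinatewise'': that phrase is doing all the work and is not a proof. Freezing $y$ gives an exceptional null set of $x$'s depending on $y$ (harmless by Fubini), but in the second application the frozen parameter is $u=\Phi(x)$, which ranges over a continuum, so the exceptional null sets cannot simply be united; worse, the natural candidate $\tilde f_y(u)=\sup\{W(x,y):\Phi(x)=u\}$ is an uncountable supremum whose joint measurability in $(u,y)$ is unclear, so Fubini cannot even be invoked, nor is it clear that the would-be factorized kernel $W'$ is measurable. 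The factorization $W=W'\circ\Phi^{\otimes2}$ \aex{} is in fact true, but it needs a genuinely two-dimensional argument, e.g.: the set of $(u,v)$ over whose fibre product $W$ oscillates by more than $\eps$ contains no chain of length roughly $1/\eps$ for the strict product order, hence is a finite union of antichains, each of Lebesgue outer measure zero (countably many vertical segments plus the graph of a non-increasing function), after which one still has to produce a measurable monotone version of the fibrewise supremum; alternatively one conditions on the partitions $\Phi^{-1}\bigl(((i-1)/n,i/n]\bigr)$ and uses an $O(1/n)$ bound plus martingale convergence --- but that is essentially the paper's own machinery (\refL{LV2} and the proof of \refT{TM}), which your route was meant to bypass.

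A secondary problem: your construction of $W'$ uses a disintegration $\mu=\int\mu^u\,\dd\lambda(u)$, i.e.\ regular conditional probabilities, which exist on standard Borel spaces but not on an arbitrary probability space --- and the theorem (and the paper, \cf{} \refE{EK}) allows arbitrary ordered probability spaces. This is avoidable (define $W'$ via conditional expectation with respect to $\sigma(\Phi)\otimes\sigma(\Phi)$ together with the Doob--Dynkin lemma, or as a monotone measurable extension of the fibrewise averages once the factorization is proved), but as written it is a real restriction. There is also a small slip in the monotonicity check: for $u_1=u_2$ two representatives of the same fibre need not satisfy $x_1\preceq x_2$; monotonicity of $W'$ still holds, but one must treat the equal-coordinate case separately rather than appeal to order-preservation of $\Phi$. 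For comparison, the paper avoids all of this by approximating $W$ in $L^1$ by monotone step kernels built from downsets (\refL{LV2}), transferring these step kernels to $\oi$, and invoking closedness of $\cwm(\oi)$ in $L^1$ from \refT{Tcwm}; your approach is genuinely different and could be made to work, but only after the two-variable factorization lemma is actually proved.
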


We shall next define two quantitative measures of how far a kernel
is from being monotone,
in analogy with \eqref{go1<a1}--\eqref{go1} (or, more closely,
\eqref{go3<a}, \eqref{goj<} and \eqref{goj}),
and \eqref{goj'}--\eqref{goj'b}.

Given $W\in \liss$, 
a (measurable) order $\prec$ on $\sss$,
and a (measurable) subset $A$ of $\sss$, set
\begin{equation}\label{gowa}
\begin{split}
  \gOkb(W,&\prec,A) \= \\
&\iint_{x\prec  y}\lrpar{\int_A W(x,z)\dd\mu(z)-\int_A W(y,z)\dd\mu(z)}_+
\dd\mu(x)\dd\mu(y),
\end{split}
\end{equation}

\begin{equation}\label{gock}
 \gOkc(W,\prec,A) \=   \gOkb(W,\prec,A) +  \gOkb(W,\prec,\sss\setminus A),
\end{equation}
and, for $j=1,2$,
\begin{align}
  \gO_j(W,\prec)&\=\sup_{A\subseteq\sss}\gO_j(W,\prec,A), \label{ngow<}\\
  \gO_j(W)&\=\inf_{\prec}\gO_j(W,\prec),\label{ngow}
\end{align}
where the infimum is over all measurable orders on $\sss$.
Note that
\begin{equation}\label{gobck}
 \gOkb(W)\le \gOkc(W) \le 2\gOkb(W).
\end{equation}

For $A\subseteq \sss$, let 
$\wa(x)\=\int_A W(x,z)\dd\mu(z)$. 
Then \eqref{gowa} can be written as
\begin{equation}\label{gowa1}
  \gOkb(W,\prec,A) 
=	
\iint_{x\prec  y}\Bigpar{\wa(x)-\wa(y)}_+ \dd\mu(x)\dd\mu(y).
\end{equation}

\begin{remark}
  \label{Rcutt}
It is easily seen that
\begin{equation}\label{rcutt}
\gOkb(W,\prec)
=
\sup_{f,g} \iiint_{x\prec  y}\bigpar{W(x,z)- W(y,z)}
f(x,y)g(z)\dd\mu(x)\dd\mu(y)\dd\mu(z),
\end{equation}
where the supremum is taken over all $f:\sssq\to\setoi$ and $g:\sss\to\setoi$, 
and that allowing all $f:\sssq\to\oi$ and $g:\sss\to\oi$ yields the same
result.
Thus $\gOkb(W,\prec)$ can be seen as a one-sided
version of the cut norm of
the function $\bigpar{W(x,z)- W(y,z)}\etta_{\set{x\prec y}}$ on
$\sssq\times\sss$. 

Similarly, $\gOkc(W,\prec)$ equals
\begin{multline}\label{rcuttkc}
\sup_{f_1,f_2,g} \iiint_{x\prec  y}\bigpar{W(x,z)- W(y,z)}
\bigpar{f_1(x,y)g(z)+f_2(x,y)(1-g(z))}
\\
\cdot
\dd\mu(x)\dd\mu(y)\dd\mu(z)
,
\end{multline}
where the supremum is taken either over all $f_1,f_2:\sssq\to\setoi$ and 
$g:\sss\to\setoi$, or over all $f_1,f_2:\sssq\to\oi$ and $g:\sss\to\oi$.
\end{remark}

In the light of \eqref{gobck}, $\gOkb$ and $\gOkc$ are essentially
equivalent. 
In particular $\gOkb(W)=0\iff\gOkc(W)=0$.
When the difference is not important, we simply write
$\gO$; formally, this may be read as $\gOkb$.  Occasionally, there are
advantages to considering one or the other variant.

\begin{theorem}\label{TM}
  Let $(\sss,\prec)$ be an ordered probability space and let $W$ be a kernel on
  $\sss$. Then $\gO(W,\prec)=0$ if and only if $W$ is \aex{} equal to
  a monotone kernel.
\end{theorem}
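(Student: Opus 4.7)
For the easy direction, if $W=W'$ a.e.\ with $W'\in\cwm(\sss,\prec)$, then $W_A=W'_A$ a.e.\ and $W'_A$ is increasing along $\prec$, so the non-negative integrand in \eqref{gowa1} vanishes $\mu^{\otimes 2}$-a.e.\ on $\{x\prec y\}$; hence $\gOkb(W,\prec,A)=0$ and $\gO(W,\prec)=0$.

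For the converse, suppose $\gOkb(W,\prec,A)=0$ for every measurable $A\subseteq\sss$. First reduce to a countably generated sub-$\sigma$-algebra $\cF_0\subseteq\cF$ such that $W$ and $\{x\prec y\}$ are both $\cF_0\otimes\cF_0$-measurable, and fix a countable algebra $\cA\subseteq\cF_0$ generating $\cF_0$. For each $A\in\cA$, the vanishing of the integrand forces $W_A(x)\le W_A(y)$ for $\mu^{\otimes 2}$-a.e.\ $(x,y)$ with $x\prec y$; the countable union of these exceptional null sets gives a single null $N\subseteq\sss^2$ off which this holds simultaneously for every $A\in\cA$. By $L^1$-approximation (using $|W|\le1$ and dominated convergence), the inequality extends to every $A\in\cF_0$. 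For each fixed $(x,y)\notin N$ with $x\prec y$, applying it to $A_\varepsilon:=\{z:W(x,z)-W(y,z)>\varepsilon\}\in\cF_0$ gives $\varepsilon\mu(A_\varepsilon)\le W_{A_\varepsilon}(x)-W_{A_\varepsilon}(y)\le 0$, forcing $\mu(A_\varepsilon)=0$; letting $\varepsilon\downarrow 0$ yields $W(x,\cdot)\le W(y,\cdot)$ $\mu$-a.e.

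From this pointwise a.e.\ monotonicity, together with symmetry of $W$, build a monotone envelope and its dual,
\begin{align*}
W^\flat(x,y) & := \operatorname{ess\,sup}\{W(u,v): u\preceq x,\ v\preceq y\}, \\
W^\sharp(x,y) & := \operatorname{ess\,inf}\{W(u',v'): u'\succeq x,\ v'\succeq y\}.
\end{align*}
Both are measurable (via the formula $W^\flat(x,y)=\inf\{t:\mu^{\otimes 2}(\{(u,v):u\preceq x,\ v\preceq y,\ W(u,v)>t\})=0\}$ and its dual), symmetric, and monotone, and the previous step yields $W^\flat\le W\le W^\sharp$ a.e.\ as well as $W^\flat\le W^\sharp$ pointwise. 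If $W^\flat<W^\sharp$ on a positive-measure set, then for some rationals $r<s$ the set $J:=\{(x,y):W^\flat(x,y)\le r<s\le W^\sharp(x,y)\}$ has positive measure. The definitions then force: $\mu^{\otimes 2}$-a.e.\ $(x,y)\in J$ that has a $J$-point strictly above it (in the product order on $\sss^2$) satisfies $W(x,y)\le r$, and $\mu^{\otimes 2}$-a.e.\ $(x,y)\in J$ that has a $J$-point strictly below it satisfies $W(x,y)\ge s$. A density-type argument shows $\mu^{\otimes 2}$-a.e.\ $(x,y)\in J$ admits both, yielding the contradiction $r\ge s$. Hence $W^\flat=W^\sharp=W$ a.e., and $W^\flat\in\cwm(\sss,\prec)$ is the desired monotone kernel.

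The main technical obstacle is the density fact in the last step: that $\mu^{\otimes 2}$-a.e.\ point of a positive-measure $J\subseteq\sss^2$ admits $J$-points both strictly above and strictly below in the product order. On $\sss=\oi$ this is immediate from the Lebesgue density theorem applied in $\oi^2$. The general case reduces to this via the order-preserving, measure-pushing-forward map $F:\sss\to\oi$, $F(x):=\mu(\{y:y\prec x\})$, after splitting any atoms of $\mu$; alternatively, one can bypass the envelope construction entirely by a martingale argument along a refining sequence of $\prec$-convex finite partitions and an appeal to the $L^1$-closure of $\cwm(\sss,\prec)$ from \refT{Tcwm}\ref{tcwm1}.
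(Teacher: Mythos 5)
Your argument is correct in substance, but it takes a genuinely different route from the paper's. The paper never extracts pointwise monotonicity of $W$ at this stage: it averages $W$ over the blocks of the canonical partition \eqref{Ani} built from the nested downsets of \refL{Lo}, observes that the resulting step kernels $W_{\cP_n}$ are monotone and sandwiched as in \refL{LV2} with $\norm{W_{\cP_n}-W}\qliss\le 4/n$, and obtains the monotone representative as an \aex{} limit along $n=2^k$ (atoms being handled by passing to $\sss\times\oi$ with the lexicographic order). You instead derive the pointwise statement that for \aex{} $(x,y)$ with $x\prec y$ one has $W(x,\cdot)\le W(y,\cdot)$ \aex, directly from $\gO(W,\prec,A)=0$, by restricting to a countably generated $\cF_0$ and testing with the pair-dependent sets $A_\eps=\set{z:W(x,z)-W(y,z)>\eps}$ --- a clean device which the paper sidesteps by only ever using the countably many partition sets --- and you then upgrade this \aex{} comparison of sections to a genuine monotone representative via essential-supremum envelopes and a Lebesgue-density argument transported to $\sssp$ through the map $F(x)=\mu\set{y:y\prec x}$, after splitting atoms. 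Both routes are of comparable length; yours isolates the measure-theoretic content (the density step) explicitly, while the paper's approximation argument reuses machinery (\refL{Lo}, \refL{LV2}) already needed elsewhere in Section~\ref{Spfmono1}. Your closing alternative --- refining $\prec$-convex partitions together with the $L^1$-closedness of $\cwm(\sss,\prec)$ from \refT{Tcwm} --- is essentially the paper's own proof.

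Two steps in your write-up need tightening, though both repairs are local. First, the sandwich $W^\flat\le W\le W^\sharp$ \aex{} is not immediate from the section comparison: it requires a short Fubini/chaining argument through an intermediate point $(x,v)$, using the symmetry of $W$. Second, the assertion that \aex{} $(x,y)\in J$ possessing \emph{some} $J$-point above it satisfies $W(x,y)\le r$ is not forced by the definition of $W^\flat$ alone, because the exceptional null set in ``$W\le r$ \aex{} on the lower quadrant of $(x',y')$'' depends on $(x',y')$. What the argument actually needs --- and what the Lebesgue density theorem, pushed through $F$, does deliver --- is that \aex{} point of $J$ has a set of $J$-points of \emph{positive} $\mu\times\mu$-measure above it (and likewise below); a Fubini argument in the four variables $(x,y,x',y')$ then yields $W\le r$ \aex{} on $J$ and $W\ge s$ \aex{} on $J$, giving the contradiction. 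With these adjustments the proof is complete.
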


As noted above, $\gO_j$, $j=1,2$, is an analogue of $\gO_j$ defined earlier
for graphs. Indeed, there is a simple relation.

\begin{lemma}\label{LD2}
If\/ $G$ is a graph with an order $\prec$ on the vertex set $V$, 
and $<$ denotes the standard order on $\oi$,
then
$\gO_j(W_G,<)=\gO_j(G,\prec)$ for $j=1,2$.
\end{lemma}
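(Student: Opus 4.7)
The plan is a direct computation. Label the vertices of $G$ as $v_1\prec v_2\prec\cdots\prec v_n$ and write $I_k\=((k-1)/n,k/n]$. By construction, $W_G$ is the step function equal to $A_G(i,j)$ on $I_i\times I_j$. For an arbitrary measurable $B\subseteq\oi$, set $p_j\= n\,|B\cap I_j|\in\oi$. For $x\in I_i$ one computes
\[
 W_B(x)=\int_B W_G(x,z)\dd z = \frac1n\sum_{j=1}^n A_G(i,j)\,p_j =:\frac{\tilde e_i}{n}.
\]
In particular $W_B$ is constant on each $I_i$, and when $B=\tilde A\=\bigcup_{j:v_j\in A}I_j$ we have $p_j=\ett{v_j\in A}$ and $\tilde e_i=e(v_i,A)$.

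Next I would compute $\gOkb(W_G,<,B)$ from~\eqref{gowa1}. Pairs $(x,y)$ with $x,y$ in the same $I_i$ contribute $0$ since $W_B(x)=W_B(y)$ there; pairs with $x\in I_i$, $y\in I_j$ and $i\ne j$ automatically satisfy $x<y$ iff $i<j$, and each such rectangle has measure $1/n^2$. Therefore
\[
 \gOkb(W_G,<,B)=\frac1{n^3}\sum_{i<j}\bigpar{\tilde e_i-\tilde e_j}_+.
\]
Specialising to $B=\tilde A$ and comparing with~\eqref{go3<a} gives $\gOkb(W_G,<,\tilde A)=\gOkb(G,\prec,A)$. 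Since $\oi\setminus\tilde A=\widetilde{V\setminus A}$, the same identity combined with~\eqref{gock} yields $\gOkc(W_G,<,\tilde A)=\gOkc(G,\prec,A)$.

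It remains to show the supremum over measurable $B\subseteq\oi$ in~\eqref{ngow<} is attained (or at least matched) by sets of the form $\tilde A$. Observe that $\gOkb(W_G,<,B)$ and $\gOkc(W_G,<,B)$ depend on $B$ only through the vector $p=(p_1,\dots,p_n)\in\oi^n$. As a function of $p$, each summand $(\tilde e_i-\tilde e_j)_+$ is the positive part of a linear functional in $p$ and hence convex; so $\gOkb(W_G,<,\cdot)$ and $\gOkc(W_G,<,\cdot)$, viewed as functions of $p$, are convex on the cube $\oi^n$. A convex function on a cube attains its maximum at a vertex, so the supremum is achieved at some $p\in\setoi^n$, \ie{} at some $B=\tilde A$. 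Taking the sup over $A\subseteq V$ therefore gives $\gO_j(W_G,<)=\gO_j(G,\prec)$ for $j=1,2$.

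There is no serious obstacle; the only mildly subtle point is the reduction from arbitrary measurable $B\subseteq\oi$ to indicator sets $B=\tilde A$, which is handled by the convexity argument above.
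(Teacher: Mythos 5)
Your proposal is correct and follows essentially the same route as the paper: a direct computation showing that $\gO_j(W_G,<,B)$ depends only on the measures $|B\cap I_i|$ and agrees with $\gO_j(G,\prec,A)$ when $B$ is a union of intervals, together with the convexity of $u\mapsto u_+$ to reduce the supremum over measurable $B$ to such indicator sets. The paper's proof is just a terser version of the same argument.
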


For $\gOkc$, we shall show that \refL{LD2} implies a corresponding
result after minimizing over the relevant orderings.
\begin{lemma}\label{LD3}
If\/ $G$ is a graph, then $\gOkc(W_G)=\gOkc(G)$.
\end{lemma}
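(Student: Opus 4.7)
The plan is to prove both $\gOkc(W_G) \le \gOkc(G)$ and $\gOkc(W_G) \ge \gOkc(G)$ separately. For the upper bound, choose an order $\prec_*$ on $V(G)$ attaining the minimum in the definition of $\gOkc(G)$, form $W_G$ using $\prec_*$, and invoke \refL{LD2} with $j=2$: this gives $\gOkc(W_G,<) = \gOkc(G,\prec_*) = \gOkc(G)$, where $<$ is the standard order on $\oi$. Since $\gOkc(W_G)$ is an infimum over all measurable orders on $\oi$, the bound $\gOkc(W_G) \le \gOkc(G)$ follows at once.

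The bulk of the work is the reverse inequality, which I propose to derive from a kernel analogue of \refL{LB3X}, namely: for any kernel $W$ on an ordered probability space $(\sss,\prec)$ and any measurable order $<$ satisfying $x<y \implies d_W(x)\le d_W(y)$, where $d_W(x)\=\int_\sss W(x,z)\,d\mu(z)$, one has $\gOkc(W) = \gOkc(W,<)$. The proof directly transplants that of \refL{LB3X}: set $f(x,y,A) \= (W_A(x) - W_A(y))_+$ and $g(x,y,A) \= f(x,y,A) + f(x,y,\sss\setminus A)$. The identity $W_A = d_W - W_{\sss\setminus A}$ gives $f(x,y,A) \le f(y,x,\sss\setminus A)$ whenever $d_W(x)\le d_W(y)$, and symmetrically with $A$ and $\sss\setminus A$ swapped; adding the two yields $g(x,y,A) \le g(y,x,A)$ whenever $x<y$. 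Writing $\gOkc(W,<,A) - \gOkc(W,\prec,A)$, splitting the domains of integration by whether the two orders agree, and substituting $x \leftrightarrow y$ in the disagreement term, one finds the difference equals $\iint_{x<y,\,x\succ y}(g(x,y,A) - g(y,x,A))\,d\mu(x)\,d\mu(y)\le 0$. Taking the supremum over $A$ and the infimum over $\prec$ completes the claim.

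To conclude, construct $W_G$ using any vertex ordering $\prec_{\mathrm{deg}}$ on $V$ that is monotone in the degree sequence. Then $d_{W_G}$ is a non-decreasing step function on $\oi$, so the standard order $<$ is degree-monotone for $W_G$. Combining the kernel analogue of \refL{LB3X} with \refL{LD2} and \refL{LB3X} itself gives
\begin{equation*}
\gOkc(W_G) \;=\; \gOkc(W_G,<) \;=\; \gOkc(G,\prec_{\mathrm{deg}}) \;=\; \gOkc(G).
\end{equation*}

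The only substantive step is the kernel analogue of \refL{LB3X}. The pointwise inequality $g(x,y,A) \le g(y,x,A)$ is a transparent transcription of the graph argument via the identity $W_A = d_W - W_{\sss\setminus A}$, and the integral rearrangement requires only joint measurability of the two orders, which is built into the definition of an ordered probability space; no further subtleties are anticipated.
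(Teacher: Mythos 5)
Your proposal is correct and takes essentially the same route as the paper: the ``kernel analogue of \refL{LB3X}'' you establish is exactly the paper's \refL{LA3} (which the paper proves by the same transplantation of the \refL{LB3X} argument), and both inequalities are then obtained from \refL{LD2} with a degree-monotone labelling, just as in the paper's proof. The only cosmetic differences are that you re-derive \refL{LA3} rather than citing it, and that you use implicitly (where the paper states it explicitly) the fact that $\gOkc(W_{G,\prec})$ does not depend on the vertex labelling used to define $W_G$.
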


Note that $W_G$ depends on the labelling of the vertices in $G$, 
but this is
harmless since the different versions differ by \mpp{} bijections of $\oi$
(in fact, permutations of subintervals) and  obviously have the same
$\gO_j(W_G)$.

\begin{remark}\label{RD3}
Let $G=K_{m,m}$ as in \refE{EB}. Then $W_G$ does not
depend on $m$, and one can check that $\gOkb(W_G)=1/16$.
For $m$ odd, we have $\gOb(G)>1/16$ by \eqref{mN2odd}.
Thus we can have $\gOkb(W_G)<\gOkb(G)$.
It seems likely that the difference is bounded by some function
tending to 0 as $n\to\infty$, but we have not proved anything
stronger than $\gOkb(W_G)\le \gOkb(G)\le 2\gOkb(W_G)$,
which follows from \refL{LD3} and the relationship
between $\gO_1$ and $\gO_2$.
\end{remark}

\begin{remark}\label{RD3b} 
Given a graph $G$,
define $\wv_G$ as the adjacency matrix of $G$, regarded as a kernel on
$V=V(G)$, which we
regard as a probability space with the uniform
probability measure (each point has mass $1/|G|$).
It is easily verified that $\gOkb(\wv_G,\prec,A)=\gOb(G,\prec,A)$
for every order $\prec$ on $V$ and every set $A\subseteq V$. Hence
$\gOkb(\wv_G,\prec)=\gOb(G,\prec)$ for every order $\prec$ and
$\gOkb(\wv_G)=\gOb(G)$, and the same holds for $\gO_2$.

Note that $\wv_G$ and $W_G$ are equivalent
kernels. It follows from \refL{LD3} that $\gOkc(\wv_G)=\gOkc(W_G)$,
but \refR{RD3} shows that $\gOkb(\wv_G)>\gOkb(W_G)$ if $G=K_{m,m}$ with $m$
odd.
(See also \refC{Cjeppe} and \refR{Rjeppe} below.)
\end{remark}

\begin{remark}
  \label{RK}
In \eqref{ngow}, we take the infimum over all measurable orders on $\sss$. In
general, this may be problematic, since there are \ps{s} with no
measurable orders, see \refE{EK} below. In such cases, we interpret
\eqref{ngow} as $\gO_j(W)=\infty$ (or perhaps 1), but this has the unhappy
consequence that two equivalent kernels $W_1$ and $W_2$ may have
$\gO_2(W_1)\neq\gO_2(W_2)$. For example, let $W_1$ and $W_2$ both be constant
$1/2$, with $W_1$ defined on $\oi$  and $W_2$ on a space
$\sss$ with no measurable order; then $\gO_2(W_1)=0$ and $\gO_2(W_2)=\infty$.
In the sequel we therefore consider only $\sss$ that have at least one
measurable order. Even in this case, equivalent
kernels may have different $\gOkb$; see \refR{RD3b}.
We will show in \refC{Cjeppe} that there is no such problem for $\gOkc$.
The case $\gO(W)=0$ is covered by the following theorem.
\end{remark}

\begin{theorem}
  \label{TM2}
Let $W$ be a kernel on a \ps{} $\sss$ with at least one measurable order.
Then the following are equivalent. 
\begin{romenumerate}
\item \label{tm2-gO}
$\gO(W)=0$. 
\item \label{tm2-mono}
There exists a measurable order $\prec$ on $\sss$ such that\/
$W$ is \aex{} equal to a monotone kernel on $\sssp$.
\item \label{tm2-equiv}
$W$ is equivalent to a monotone kernel on some \ops.
\item \label{tm2-oi}
$W$ is equivalent to a monotone kernel on  $\oi$.
\item \label{tm2-gG}
$\gG_W$ is a monotone graph limit.
\end{romenumerate}
\end{theorem}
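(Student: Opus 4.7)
The plan is to prove Theorem TM2 by closing a loop of implications through all five statements. The direct equivalences and easy implications are immediate: (iv) $\Leftrightarrow$ (v) by the definition of $\cum$; (iii) $\Leftrightarrow$ (iv) by \refT{Tmono} (forward) and triviality (backward, since $\oi$ is an \ops); (ii) $\Rightarrow$ (iii) because \aex{} equal kernels are equivalent; and (ii) $\Rightarrow$ (i) by direct application of \refT{TM} (since $\gO(W,\prec)=0$ then implies $\gO(W)\le\gO(W,\prec)=0$). The two substantive steps are (i) $\Rightarrow$ (ii) and an implication of the form (iv) $\Rightarrow$ (i) that closes the loop.

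For (i) $\Rightarrow$ (ii), I would adapt \refL{LB3X} to kernels. Let $\prec_0$ be a measurable order on $\sss$, which exists by hypothesis, and define the \emph{degree order} $\prec^*$ by $x\prec^* y$ iff $\ws(x)<\ws(y)$, or $\ws(x)=\ws(y)$ and $x\prec_0 y$; then $\prec^*$ is a measurable order. The key claim, parallel to \refL{LB3X}, is that $\gOkc(W,\prec^*)\le\gOkc(W,\prec)$ for every measurable order $\prec$. For a pair $(x,y)$ with $x\prec y$ but $y\prec^* x$ one has $\ws(y)\le\ws(x)$, so with $B\=\sss\setminus A$,
\[
(\wa(x)-\wa(y))+(\wx{B}(x)-\wx{B}(y))=\ws(x)-\ws(y)\ge 0,
\]
which implies $(\wa(x)-\wa(y))_+ + (\wx{B}(x)-\wx{B}(y))_+\ge(\wa(y)-\wa(x))_+ + (\wx{B}(y)-\wx{B}(x))_+$, so the contribution under $\prec^*$ is no larger than under $\prec$. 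Combining this with hypothesis (i) and \eqref{gobck} yields $\gOkc(W,\prec^*)=\gOkc(W)=0$, hence $\gOkb(W,\prec^*)=0$, and \refT{TM} gives (ii).

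To close the loop I would prove (iv) $\Rightarrow$ (i). Given $W$ equivalent to some $W_0\in\cwm(\oi)$, invoke the standard coupling characterization of equivalence to obtain a probability space $(\sss^*,\nu)$ and measure-preserving maps $\alpha\colon\sss^*\to\sss$, $\beta\colon\sss^*\to\oi$ with $W^\alpha=W_0^\beta$ \aex{} on $\sss^{*2}$. Let $\widetilde\prec$ be the pullback of the degree order $\prec^*$ on $\sss$ via $\alpha$, with ties broken by a fixed measurable order on $\sss^*$ (e.g.\ take $\sss^*=\sss\times\oi$ and use the lexicographic order). For any measurable $B\subseteq\sss^*$ write $(W^\alpha)_B(u)=G_B(\beta(u))$, where $G_B(s)\=\int_\oi W_0(s,t)\,g_B(t)\dd t$ and $g_B$ is the Radon--Nikodym density of $\beta_*(\nu\!\restriction\! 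B)$ with respect to Lebesgue measure; monotonicity of $W_0$ makes $G_B$ monotone in $s$. The identity $\ws\circ\alpha=\int_\oi W_0(\beta(\cdot),t)\dd t\circ\beta$ \aex{} together with monotonicity of the latter implies that if $u\widetilde\prec v$ then either $\beta(u)\le\beta(v)$, giving $G_B(\beta(u))\le G_B(\beta(v))$ directly, or $\beta(u),\beta(v)$ lie in an interval where the degree of $W_0$ is constant, in which case the standard monotonicity-plus-constant-integral argument shows $W_0$ is \aex{} independent of its first argument on that slab, so $G_B$ is constant there. Either way the positive part vanishes, hence $\gOkb(W^\alpha,\widetilde\prec,B)=0$ for every $B$. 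Finally, taking $B=\alpha^{-1}(A)$ and using that $\alpha$ is measure-preserving yields $\gOkb(W,\prec^*,A)=\gOkb(W^\alpha,\widetilde\prec,\alpha^{-1}(A))=0$ for every $A\subseteq\sss$, so $\gOkb(W,\prec^*)=0$ and thus $\gO(W)=0$.

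The main technical obstacle lies in the (iv) $\Rightarrow$ (i) step, specifically in two sub-facts: producing an exact coupling $W^\alpha=W_0^\beta$ \aex{} from $\dcut(W,W_0)=0$ (a standard consequence of the theory of graph limits, cited from \cite{BR:metrics,SJ249}), and the degree-level-set observation for monotone kernels on $\oi$. The rest of the argument, including the kernel analogue of \refL{LB3X} and the appeals to \refT{TM}, is a direct translation of the corresponding graph-theoretic reasoning.
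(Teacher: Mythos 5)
Your argument is correct, but it closes the cycle by a genuinely different route from the paper's. The implication (i)$\Rightarrow$(ii) is the same in both: your degree order $\prec^*$ and the pairwise-swap inequality giving $\gOkc(W,\prec^*)\le\gOkc(W,\prec)$ are exactly the paper's \refL{LA3} and \refC{CA3} (the kernel analogue of \refL{LB3X}), followed by \refT{TM}; and (ii)$\Rightarrow$(iii), (iii)$\Leftrightarrow$(iv)$\Leftrightarrow$(v) are handled as in the paper via \refT{Tmono}. The difference lies in the return implication. The paper proves (iii)$\Rightarrow$(i) from \refL{Ljeppe}, the continuity estimate $\gOkc(W_1)\le\gOkc(W_2)+2\dcut(W_1,W_2)$, which needs only the infimum definition of $\dcut$: one approximates by a step kernel and compares lexicographic orders on an arbitrary coupling space (\refL{LX}, \refL{LA3}, \refL{LD1}(iii)). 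You instead prove (iv)$\Rightarrow$(i) by importing the exact-coupling characterization of equivalence (from $\dcut(W,W_0)=0$ obtain measure-preserving $\alpha,\beta$ with $W^\alpha=W_0^\beta$ \aex) and then verifying directly that the degree order on $\sss$ witnesses $\gOkb(W,\prec^*)=0$; your two key sub-steps --- that $G_B$ is constant on each level set of the marginal of the monotone $W_0$ (since monotonicity plus a constant marginal forces $W_0(s,\cdot)=W_0(s',\cdot)$ \aex{} for $s,s'$ in the level set), and the transfer $\gOkb(W,\prec^*,A)=\gOkb(W^\alpha,\widetilde\prec,\alpha^{-1}(A))$ (the paper's \refL{LX} in disguise, with the tie region contributing nothing) --- are sound, as is the measurability of the tie-broken pullback order on a coupling realized on $\sss\times\oi$. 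What each approach buys: yours is shorter at this point but rests on a substantially deeper external theorem (the Borgs--Chayes--Lov\'asz uniqueness/coupling result), and since \refT{TM2} assumes only that $\sss$ carries a measurable order, you need the version for kernels on general probability spaces, so the citation should be to the general statement in \cite{SJ249} rather than to \cite{BR:metrics}; the paper's route is self-contained, yields a quantitative Lipschitz bound rather than only the zero-distance case, and \refL{Ljeppe} is reused in the proof of the main \refT{TQ1}, so within the paper it comes essentially for free.
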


\begin{example}  \label{EK}
Let $\sss=\oi$, but equipped with the
$\gs$-field $\cF_0$ consisting of the subsets of $\sss$ that are either
countable or have a countable complement. For the measure $\mu$ we take the
restriction of the Lebesgue measure to $\cF$. (Thus, $\mu(A)=0$ if $A$ is
countable, and $\mu(A)=1$ otherwise.)

Let $\cC$ be the family of countable subsets of $\sss$.
The $\gs$-field $\cF\times\cF$ is contained in the $\gs$-field
$$
\bigset{A\subseteq\sssq:\exists B_1,B_2\in\cC \text{ such that }
A\text{ or }\sss\setminus A\subseteq(B_1\times \sss)\cup(\sss\times B_2)
}.
$$
Thus, if $\prec$ is a measurable order, then there exist $B_1,B_2\in\cC$
such that either
\begin{align*}
\set{(x,y):x\prec y}&\subseteq(B_1\times \sss)\cup(\sss\times B_2)
\intertext{or}
\set{(x,y):x\succeq y}&\subseteq(B_1\times \sss)\cup(\sss\times B_2);
\end{align*}
in the latter case we have
\begin{align*}
\set{(x,y):x\prec y}
\subset
\set{(x,y):x\preceq y}&\subseteq(B_2\times \sss)\cup(\sss\times B_1).
\end{align*}
However, in both cases we find that if we choose two distinct
$x,y\notin(B_1\cup B_2)$, then neither $x\prec y$ nor $y\prec x$ holds, which
is a contradiction. Thus $(\sss,\cF,\mu)$ is a probability space
supporting no measurable orders.
\end{example}

\section{Proofs of Theorems \ref{Tcwm}--\ref{Tmono}}\label{Spfmono1}

A \emph{downset} in an ordered set $(\sss,\prec)$ is a subset $A$ such that if
$x\prec y$ and $y\in A$, then $x\in A$.
We begin with two lemmas concerning simple (and certainly well-known)
properties of downsets; for completeness we give full proofs.

\begin{lemma}
  \label{Lo0}
  \begin{thmenumerate}
\item
If\/ $A$ and $B$ are downsets in a linearly ordered set $\sssp$,
then $A\subseteq B$ or $B\subseteq A$.
\item
If $A$ and $B$ are downsets in an ordered probability space $\sssp$ with $\mu(A)<\mu(B)$, then
$A\subset B$.	
  \end{thmenumerate}
\end{lemma}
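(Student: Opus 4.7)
The plan for \refL{Lo0} is straightforward set-theoretic chasing; both parts should be essentially one-line arguments once set up correctly.

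For part (i), I would argue by contradiction. Suppose that neither $A\subseteq B$ nor $B\subseteq A$ holds. Then there exist $a\in A\setminus B$ and $b\in B\setminus A$. Since $a\neq b$ (they lie in disjoint sets) and $\prec$ is a linear order, either $a\prec b$ or $b\prec a$. In the first case, $b\in B$ and the downset property of $B$ force $a\in B$, contradicting $a\notin B$. In the second case, $a\in A$ and the downset property of $A$ force $b\in A$, contradicting $b\notin A$. Hence one of $A,B$ must contain the other.

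For part (ii), I would simply invoke part (i). Since $A$ and $B$ are downsets, one is contained in the other. If $B\subseteq A$, then $\mu(B)\le\mu(A)$, which contradicts the hypothesis $\mu(A)<\mu(B)$. Therefore $A\subseteq B$, and since the strict inequality of measures rules out $A=B$, we conclude $A\subset B$.

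There is no real obstacle here; the only thing to be mindful of is that the statement is purely order-theoretic (no measurability issues arise in part (i), and measurability of $A$ and $B$ is assumed throughout the paper in part (ii)), so no technicalities about null sets or measurable orders intervene.
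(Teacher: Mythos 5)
Your proof is correct and follows essentially the same route as the paper's: for (i) the paper likewise notes that points $x\in A\setminus B$ and $y\in B\setminus A$ would be incomparable (the downset property ruling out both $x\prec y$ and $y\prec x$), and for (ii) it deduces $A\subset B$ from (i) since $B\subseteq A$ would contradict $\mu(A)<\mu(B)$. Your write-up just spells out the case analysis more explicitly; no issues.
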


\begin{proof}
\pfitem{i}
Otherwise there would exist $x\in A\setminus B$ and $y\in B\setminus A$,
but then neither $x\prec y$, $y\prec x$ nor $x=y$ is possible.
  \pfitem{ii}
Now  $B\subseteq A$ is impossible, and the result follows by (i).
\end{proof}

\begin{lemma}
  \label{Lo}
If\/ $\sssp$ is an ordered probability space without atoms, then for every
$t\in\oi$ there exists a downset $D(t)$ with $\mu(D(t))=t$. 
Furthermore, $D(t)\subset D(u)$ when $t<u$.
\end{lemma}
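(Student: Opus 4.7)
The plan is to show that every $t\in[0,1]$ belongs to
$\mathcal{M} := \{\mu(D): D\text{ a downset of }\sssp\}$; once this is
established, choosing any downset $D(t)$ with $\mu(D(t))=t$ for each $t$ suffices, since the strict nesting $D(t)\subsetneq D(u)$ for $t<u$ is then forced by part~(ii) of \refL{Lo0}. I would proceed in two stages: closedness of $\mathcal M$, followed by the exclusion of gaps via atomlessness.

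For closedness, part~(i) of \refL{Lo0} makes the family of downsets a chain under inclusion. Given downsets with $\mu(D_n)\to s$, pass to a monotonically convergent subsequence of measures; by part~(ii) of \refL{Lo0} the corresponding sets are monotone by inclusion, and taking union or intersection yields a downset of measure $s$. Since $0,1\in\mathcal M$, if $\mathcal M\neq[0,1]$ there is a maximal open interval $(s,s')\subseteq[0,1]\setminus\mathcal M$ with $s,s'\in\mathcal M$. Fix downsets $D_s\subsetneq D_{s'}$ of these measures and set $B:=D_{s'}\setminus D_s$, so $\mu(B)=s'-s>0$.

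The key reduction is that downsets $E$ of $B$ (with the induced order) correspond bijectively to downsets $D_s\cup E$ of $\sss$ lying between $D_s$ and $D_{s'}$: if $y\in E\subseteq B$ and $x\prec y$ in $\sss$, then $x\in D_{s'}$ (as $D_{s'}$ is a downset), so either $x\in D_s$, or else $x\in B$, in which case $x\prec y$ forces $x\in E$ by the downset property of $E$ in $B$. So the gap hypothesis amounts to saying $B$ itself has no downset of measure in $(0,\mu(B))$. Consider the principal downsets $D_x^B:=\{y\in B:y\preceq x\}$; each is a downset of $B$ and therefore of measure $0$ or $\mu(B)$. The set $B_0:=\{x\in B:\mu(D_x^B)=0\}$ is measurable (Fubini applied to $\mathbf{1}[y\preceq x]$) and is itself a downset of $B$, so $\mu(B_0)\in\{0,\mu(B)\}$.

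Suppose first $\mu(B_0)=0$, so $\mu(D_x^B)=\mu(B)$ for \aex{} $x\in B$. Fubini gives
\begin{equation*}
  (\mu\times\mu)\bigpar{\{(x,y)\in B^2: y\preceq x\}} = \int_B \mu(D_x^B)\dd\mu(x) = \mu(B)^2,
\end{equation*}
and hence $(\mu\times\mu)(\{(x,y)\in B^2: y\succ x\})=0$; by the swap-symmetry of the product measure, $(\mu\times\mu)(\{(x,y)\in B^2: y\prec x\})=0$ as well, so the diagonal $\Delta\cap B^2$ carries the full mass $\mu(B)^2$. Applying Fubini to $\Delta\cap B^2$ produces a positive-measure set of $x\in B$ with $\mu(\{x\})>0$; each such singleton is an atom, contradicting atomlessness. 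The remaining case $\mu(B_0)=\mu(B)$ gives $(\mu\times\mu)(\{(x,y)\in B^2: y\preceq x\})=0$ by Fubini, and by swap-symmetry $(\mu\times\mu)(B^2)=0$, which is absurd. The main obstacle is exactly this last Fubini-plus-diagonal step, where the measurability of $\prec$ and the atomless hypothesis combine to force the contradiction.
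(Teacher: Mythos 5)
Your proof is correct, but it takes a genuinely different route from the paper's. The paper works with the principal downsets $D_x=\set{y:y\preceq x}$ and an \iid{} sample $X_0,X_1,\dots$ from $\mu$: atomlessness makes all orderings of the sample points equally likely, whence $\E\bigsqpar{\mu(D_X)^n}=1/(n+1)$ for every $n$, so $\mu(D_X)\sim U(0,1)$; in particular $\set{\mu(D_x):x\in\sss}$ is dense in $\oi$, and $D(t)$ is then obtained as an increasing union of principal downsets, the nesting coming from \refL{Lo0}(ii). You instead show that the set of achievable downset measures is closed and exclude a gap $(s,s')$ by a Fubini/swap-symmetry analysis on the block $B=D_{s'}\setminus D_s$ (via the correct observation that downsets of $B$ correspond to downsets of $\sss$ between $D_s$ and $D_{s'}$), with atomlessness entering only to forbid a positive-mass diagonal; both cases of your dichotomy for $\mu(B_0)$ check out. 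Your route is arguably more elementary — no moment identification of the uniform law — at the price of a case analysis, whereas the paper's computation has the side benefit, used later in the proof of \refL{LTA3rearr}, that $x\mapsto\mu(D_x)$ pushes $\mu$ forward to the uniform distribution on $\oi$, a fact your argument does not directly deliver. One small point of precision: in your closedness step, equal values among the $\mu(D_n)$ do not yield inclusions from \refL{Lo0}(ii); either the limit value is already attained, or pass to a strictly monotone subsequence (or invoke \refL{Lo0}(i) to get a chain and take the union of the chain, whose measure is the supremum) — a triviality, not a gap.
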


\begin{proof}
It suffices to prove the first statement; the second then follows by
\refL{Lo0}(ii). 

For $x\in \sss$, let $D_x$ be the 
downset \set{y\in\sss:y\preceq x}.
Let $X=X_0,X_1,X_2,\dots$ be an \iid{} sequence of random points in $\sss$
(with the distribution $\mu$). Since there are no atoms, $\P(X_i=X_j)=0$ for
all $i\neq j$. Thus, for every $n$, $X_0,\dots,X_n$ are \as{} distinct, and
by symmetry, all $(n+1)!$ orderings of them have the same probability
$1/(n+1)!$. Hence,
\begin{equation*}
  \E\bigpar{\mu(D_X)^n}
=\P(X_1,\dots,X_n\prec X_0)=\frac{n!}{(n+1)!}=\frac1{n+1},
\qquad n\ge1.
\end{equation*}
Consequently, $\mu(D_X)$ has the same moments as the uniform distribution
$U(0,1)$, and thus $\mu(D_X)\sim U(0,1)$.

It follows that the set \set{\mu(D_x):x\in\sss} is a dense subset of \oi.
Hence, for every $t\in(0,1]$, there exists a sequence $(x_i)_i$ in $\sss$
such that $\mu(D_{x_i})\upto t$ as $\itoo$. 
Then $D_{x_i}\subset D_{x_{i+1}}$ for $i\ge1$ by \refL{Lo0}(ii), and we
can take $D(t)\=\bigcup_{i=1}^\infty D_{x_i}$, which is a downset with 
$\mu(D(t))\=\lim_{\itoo}\mu(D_{x_i})=t$. For $t=0$ we take $D(0)\=\emptyset$.
\end{proof}

Given an integrable function $W$ on $\sssq$ and $A,B\subseteq\sss$ with
$\mu(A),\mu(B)>0$, let
\begin{equation}\label{wab}
  \bW(A,B)\=\frac1{\mu(A)\mu(B)}\iint_{A\times B}W(x,y)\dd\mu(x)\dd\mu(y)
\end{equation}
denote the average of $W$ over $A\times B$.
If $\cP=\set{ A_i}$ is a finite partition of $\sss$, 
we say that a function on $\sssq$ is a \emph{$\cP$-step function} if it is
constant on each set $A_i\times A_j$. 
(A \emph{step function} on $\sssq$ is a $\cP$-step function for some finite
partition $\cP$.)
If $W\in \liss$,
we let
$W_{\cP}$ be the $\cP$-step function defined by 
\begin{equation}\label{wp}
  W_{\cP}(x,y)=\bW(A_i,A_j) \quad\text{for}\quad x\in A_i,\, y\in A_j.
\end{equation}
If some $A_i$ has measure 0, then $W_{\cP}$ is not defined everywhere,
but it is
always defined \aex, which suffices for us. Note that $W_{\cP}$ is the
conditional expectation of $W$ given the $\gs$-field $\cF_\cP\times\cF_\cP$,
where $\cF_\cP$ is the finite $\gs$-field on $\sss$ generated by $\cP$.
It follows that $\cutnorm{W_{\cP}}\le\cutnorm{W}$ and
$\normll{W_{\cP}}\le\normll{W}$.
If $W$ is a kernel, then $W_{\cP}$ is also a kernel. 
A kernel that is also a step function,
such as $W_{\cP}$, 
is called a \emph{step kernel}.

Suppose now that $\sssmp$ is an atomless \ops, and let $D(t)$, $0\le
t\le1$, be an 
increasing family of downsets in $\sss$ with $\mu(D(t))=t$ as in \refL{Lo},
with $D(0)=\emptyset$ and $D(1)=\sss$.

For $n\ge1$ and $i=1,\dots,n$, define
\begin{equation}\label{Ani}
 A_i=A_{ni}\=D(i/n)\setminus D((i-1)/n).
\end{equation}
Then $\cP_n\=\set{A_{ni}}_i$ is a partition of $\sss$ into $n$ sets of the same
measure $1/n$. Furthermore, if $i< j$, then $A_{ni}\prec A_{nj}$, meaning that
if $x\in A_{ni}$ and $y\in A_{nj}$, then $x\prec y$.

Given a kernel $W$ on $\sss$,
let $\wn{ij}\=\bW(A_{ni},A_{nj})$ and let $W_n$ be the step kernel
$W_{\cP_n}$; thus $W_n=\wn{ij}$ on $A_{ni}\times A_{nj}$.
Define the step kernels $W_n^\pm$ by
$W_n^+(x,y)\=\wn{i+1,j+1}$ and 
$W_n^-(x,y)\=\wn{i-1,j-1}$ on $A_{ni}\times A_{nj}$, where $\wn{ij}=0$ if
$i$ or $j=0$ and $\wn{ij}=1$ if $i$ or $j=n+1$.

If $W$ is monotone, then the matrix $(\wn{ij})_{ij}$ is increasing along
each row and column, and thus $W_n$ is a monotone step kernel.

\begin{lemma}
  \label{LV2}
Let $W$ be a monotone kernel on an atomless \ops{} $\sssp$. Then
$W_n^-\le W\le W_n^+$,
$W_n^-\le W_n\le W_n^+$ and
\begin{equation*}
  \norm{W_n-W}\qliss
\le\norm{W_n^+-W_n^-}\qliss \le 4/n.
\end{equation*}
\end{lemma}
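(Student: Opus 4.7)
The plan is to establish the two pointwise sandwich inequalities first, then deduce the $L^1$ comparison from them, and finally bound $\norm{W_n^+ - W_n^-}\qliss$ by a direct calculation. For $W_n^- \le W \le W_n^+$ pointwise, I would fix $(x,y) \in A_{ni} \times A_{nj}$ and use the block ordering $A_{ni} \prec A_{n,i+1}$ noted in the setup: every $(x',y') \in A_{n,i+1} \times A_{n,j+1}$ satisfies $x \preceq x'$ and $y \preceq y'$, so monotonicity of $W$ gives $W(x,y) \le W(x',y')$, and averaging over $A_{n,i+1} \times A_{n,j+1}$ yields $W(x,y) \le \wn{i+1,j+1} = W_n^+(x,y)$. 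The symmetric argument gives $W_n^-(x,y) \le W(x,y)$. Boundary cases (when $i$ or $j$ equals $1$ or $n$) are absorbed by the conventions $\wn{0,\cdot} = 0$ and $\wn{n+1,\cdot} = 1$, using $W \in \oi$. The second sandwich $W_n^- \le W_n \le W_n^+$ is immediate from the observation in the setup that the matrix $(\wn{ij})$ is increasing in both indices (with the same boundary conventions).

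Combining the two sandwiches places both $W$ and $W_n$ pointwise in $[W_n^-, W_n^+]$, so $|W_n - W| \le W_n^+ - W_n^-$ \aex, and integrating over $\sssq$ gives $\norm{W_n - W}\qliss \le \norm{W_n^+ - W_n^-}\qliss$. Since $W_n^+ \ge W_n^-$,
\begin{equation*}
\norm{W_n^+ - W_n^-}\qliss = \frac{1}{n^2}\sum_{i,j=1}^n \bigpar{\wn{i+1,j+1} - \wn{i-1,j-1}}.
\end{equation*}
I would evaluate this by the reindexings $(i,j) \mapsto (i+1,j+1)$ and $(i,j) \mapsto (i-1,j-1)$: the common interior block cancels, and only border rows and columns survive, together with the $\wn{n+1,\cdot} = 1$ contributions coming from the boundary convention. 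Since each $\wn{ij}$ lies in $\oi$, the surviving $O(n)$ terms total at most $4n$, yielding the bound $4/n$.

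The only mildly delicate point is keeping track of boundary conventions when applying monotonicity of $W$ to the extreme blocks; the real content of the lemma is the observation that the shifted step kernels $W_n^\pm$ simultaneously sandwich both $W$ and $W_n$, so that a single ``width'' quantity $W_n^+ - W_n^-$ controls $\norm{W - W_n}\qliss$.
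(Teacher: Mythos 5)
Your proof is correct and takes essentially the same route as the paper: the pointwise sandwich $W_n^-\le W\le W_n^+$ by monotonicity and averaging over the shifted blocks (with the boundary conventions $\wn{0,\cdot}=0$, $\wn{n+1,\cdot}=1$), then $|W_n-W|\le W_n^+-W_n^-$ and the $L^1$ bound by cancelling the common interior terms of the two shifted sums, leaving at most $4n$ border terms each bounded by $1$. No gaps.
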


\begin{proof}
If $(x,y)\in A_{ni}\times A_{nj}$ and $(x',y')\in A_{n,i+1}\times A_{n,j+1}$
(with $i,j\le n-1$), then $W(x,y)\le W(x',y')$, and averaging over $(x',y')$
it follows that
$W(x,y)\le\wn{i+1,j+1}=W_n^+(x,y)$. 
This  inequality evidently holds also if $i$ or $j=n$.
Hence $W\le W_n^+$.
Similarly, $W\ge W_n^-$.

Averaging over each $A_{ni}\times A_{nj}$, it follows that
$W_n^-\le W_n\le W_n^+$. (This also follows directly
from the monotonicity of
  $\wn{ij}$.) Consequently,
$|W_n-W|\le W_n^+-W_n^-$, and thus
\begin{equation*}
  \begin{split}
\norm{W_n-W}\qliss
&\le\iint_\sssq\lrpar{W_n^+-W_n^-}
=\iint_\sssq W_n^+ -\iint_\sssq W_n^-
\\&
=n\qww\sum_{i,j=2}^{n+1}\wn{ij}-n\qww\sum_{i,j=0}^{n-1}\wn{ij}
\le 2n\qww\sum_{i=n}^{n+1}\sum_{j=2}^{n+1}\wn{ij}
\\&
\le \xfrac4n.
  \end{split}\qedhere
\end{equation*}
\end{proof}

Trivially, for any kernel $W$ we have $\cn{W}\le \norm{W}_\liss$. In general
there is no reverse inequality. However, if $\cP$ is a partition of $\sss$ into $n$ sets
and $W$ is a $\cP$-step function, then it is trivial to bound $\norm{W}_\liss$ from above
by a polynomial times $\cn{W}$. Indeed, one can write $\norm{W}_\liss$
as a sum of $n$ integrals of the form in \eqref{cutnorm}, in each taking $g$ to be $1$ on
one part of $\cP$ and zero elsewhere, and choosing the sign of $f$ on each part appropriately.
In fact, the correct polynomial order is $\sqrt{n}$, as shown in \cite{SJ249}.

\begin{lemma}\label{LW1new}
Let $\sss$ be a probability space and $\cP$ a partition of $\sss$ into $n$ sets.
If $W$ is a $\cP$-step function, then $\norm{W}\qliss\le \sqrt{2n}\cn{W}$.
Furthermore, for any $W\in\liss$ we have
\begin{equation}\label{lw1}
\norm{W_{\cP}}\qliss\le \sqrt{2n}\cn{W}.
\end{equation}
\end{lemma}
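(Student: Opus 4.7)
The plan is to prove the step-function bound by combining Khintchine's inequality for Rademacher sums with two applications of Cauchy--Schwarz, and then to obtain the general bound by passing to the conditional expectation.

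Label the parts of $\cP$ as $A_1,\dots,A_n$, write $\mu_i\=\mu(A_i)$, and let $w_{ij}$ denote the value of the $\cP$-step function $W$ on $A_i\times A_j$. For any $f,g:\sss\to\set{\pm1}$ we have $\iint Wfg=\sum_{ij}w_{ij}\mu_i\mu_j\bar f_i\bar g_j$ with $\bar f_i,\bar g_j\in[-1,1]$ the conditional averages of $f$ and $g$ on the parts; bilinearity and convexity then show that the cut-norm supremum in \eqref{cutnormpm} reduces to
\begin{equation*}
  \cn{W}=\sup_{\eps,\eta\in\set{\pm1}^n}\Bigabs{\sum_{i,j}w_{ij}\mu_i\mu_j\eps_i\eta_j}.
\end{equation*}
For any fixed $\eps\in\set{\pm1}^n$, choosing $\eta_j=\sign\bigpar{\sum_i w_{ij}\mu_i\eps_i}$ then gives
\begin{equation*}
\sum_j\mu_j\Bigabs{\sum_i w_{ij}\mu_i\eps_i}\le\cn{W}.
\end{equation*}

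I would next take $\eps$ to consist of i.i.d.\ uniform $\pm1$ Rademacher signs and average this inequality. For each $j$, Khintchine's inequality (with Szarek's sharp $L^1$ constant $1/\sqrt2$) gives $\E\bigabs{\sum_i w_{ij}\mu_i\eps_i}\ge\bigpar{\sum_i w_{ij}^2\mu_i^2}\qq/\sqrt2$, so
\begin{equation*}
\sum_j\mu_j\Bigpar{\sum_i w_{ij}^2\mu_i^2}\qq\le\sqrt2\,\cn{W}.
\end{equation*}
A second Cauchy--Schwarz (with the constant weight vector $(1,\dots,1)$) yields $\sum_i|w_{ij}|\mu_i\le\sqrt n\,\bigpar{\sum_i w_{ij}^2\mu_i^2}\qq$; multiplying by $\mu_j$, summing over $j$, and combining the two bounds produces
\begin{equation*}
\norm{W}\qliss=\sum_{i,j}|w_{ij}|\mu_i\mu_j\le\sqrt n\cdot\sqrt2\,\cn{W}=\sqrt{2n}\,\cn{W},
\end{equation*}
which is the step-function inequality.

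For general $W\in\liss$, $W_\cP$ is a $\cP$-step function and, being the conditional expectation of $W$ on $\cF_\cP\otimes\cF_\cP$, satisfies $\cn{W_\cP}\le\cn{W}$, as recorded just above \refL{LV2}; applying the step-function bound to $W_\cP$ therefore gives \eqref{lw1}. The only substantive ingredient is Khintchine's inequality with an absolute constant: the elementary route (take $g=\etta_{A_j}$ and $f_i=\sign(w_{ij})$, then sum over $j$) yields only $\norm{W}\qliss\le n\cn{W}$, so improving this factor of $n$ down to $\sqrt{2n}$ is the main obstacle.
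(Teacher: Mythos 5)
Your proof is correct and follows essentially the same route as the paper's: reduce the cut norm of a $\cP$-step function to a discrete bilinear form over $\pm1$ vectors, use Khintchine's inequality with Szarek's sharp constant together with a Cauchy--Schwarz step to obtain the factor $\sqrt{2n}$, and then handle general $W$ via the contractive property $\cn{W_{\cP}}\le\cn{W}$ of the conditional expectation. The only difference is presentational: you re-derive Littlewood's mixed-norm inequality from Khintchine by averaging over random signs, whereas the paper cites that inequality (with the constant $\sqrt2$) as a known result.
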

\begin{proof}
It suffices to prove the first statement; the second follows immediately,
since $W_{\cP}$ is a $\cP$-step function, and $\cn{W_{\cP}}\le\cn{W}$.

The statement and proof are (essentially) present in Remark 9.8 of
\cite{SJ249}. Nevertheless, 
let us write out the proof.

In 1930, Littlewood~\cite{Littlewood} proved that there is a constant
$c\le \sqrt{3}$ such that for any $n$-by-$n$ array of real numbers $a_{ij}$ we have
\begin{eqnarray*}
  \sumin \Bigpar{\sumjn|a_{ij}|^2}\qq
&\le& c \max_{\eps_i,\eps_j'=\pm1} \sumin\sumjn\eps_i\eps_j'a_{ij}\\
&=& c  \max_{\eps_i=\pm1} \sumjn \biggabs{\sumin \eps_ia_{ij}}
    =c  \max_{\eps_j=\pm1} \sumin \biggabs{\sumjn \eps_ja_{ij}}.
\end{eqnarray*}
Later it was noticed (see \cite{Zyg}, Ch. 5 and \cite{Blei}) that
this inequality of Littlewood's could be deduced from a
special case of an inequality that had been proved some years earlier by
Khintchine~\cite{Khin}. In 1976, Szarek~\cite{Szar} proved that the best constant
in Littlewood's inequality (in fact, in the corresponding inequality of Khintchine)
is $\sqrt{2}$. For some related results, see, e.g., \cite{FHJSZ}, \cite{Haa1},
\cite{Haa2}, \cite{KoKw} and \cite{Lat}.

As noted in~\cite{SJ249}, using the Cauchy--Schwartz inequality and
Littlewood's inequality, with the constant $c=\sqrt{2}$ proved by Szarek,
it follows that
\begin{equation}\label{littlewood2}
  \sumin \sumjn|a_{ij}|
\le
  \sumin n\qq \Bigpar{\sumjn|a_{ij}|^2}\qq
\le \sqrt{2n}\,\max_{\eps_i,\eps'_j=\pm1} \sumin\sumjn\eps_i\eps'_ja_{ij}.
\end{equation}

Returning to the proof of Lemma~\ref{LW1new},
let the parts of $\cP$ be $A_1,\ldots,A_n$, and set $a_{ij}=\mu(A_i)\mu(A_j)W_{ij}$,
where $W_{ij}$ is the value of $W$ on $A_i\times A_j$.
Then $\norm{W}_{\liss}=\sum_{ij}|a_{ij}|$. In the definition \eqref{cutnorm} of the
cut norm, restricting our attention to functions $f,g:\sss\to\{\pm1\}$
that are constant on each $A_i$, we find that
\begin{equation}\nonumber
 \cn{W}\ge \max_{\eps_i,\eps'_j=\pm1} \sumin\sumjn\eps_i\eps'_ja_{ij}
\end{equation}
(in fact, equality holds), so the result follows from \eqref{littlewood2}.
\end{proof}

As noted in~\cite{SJ249}, it is easy to check that the factor $\sqrt{2n}$ is best possible
apart from the constant, for example by considering $0/1$-valued kernels associated to
random graphs.
For arbitrary {\em monotone} kernels, the lemmas above allow
us to bound the $L^1$-norm in terms of the cut norm.

\begin{theorem}\label{TV3}
If\/ $W_1$ and $W_2$ are monotone kernels on an \ops{} $\sssp$, then
\begin{equation}\label{lv3}
\norm{W_1-W_2}\qliss\le 10\cn{W_1-W_2}^{2/3}.
\end{equation}
\end{theorem}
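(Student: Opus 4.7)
The plan is to approximate $V \= W_1 - W_2$ by its $\cP_n$-step averaging for a carefully chosen partition $\cP_n$, and then bound the $L^1$-norm of this step function by its cut norm using \refL{LW1new}. Set $\delta \= \cn{V}$; since $V$ takes values in $[-1,1]$ we have $\delta \le \norm{V}\qliss \le 1$, and we may assume $\delta > 0$ (the case $\delta=0$ will follow by letting $n \to \infty$ in the bound derived below).

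The first step is to reduce to the atomless case, since \refL{Lo} is stated for atomless \ops{s}. Replace $\sssp$ by the product $\sss'\=\sss\times\oi$ with its product measure and the lexicographic order $\prec'$ defined by $(x,s)\prec'(y,t)$ iff $x\prec y$, or $x=y$ and $s<t$. Measurability of $\prec'$ follows from that of $\prec$, $\sss'$ is atomless, and each monotone kernel $W$ on $\sssp$ lifts to the monotone kernel $W'((x,s),(y,t))\=W(x,y)$ on $(\sss',\prec')$. Both $\cn{W_1-W_2}$ and $\norm{W_1-W_2}\qliss$ are preserved under this lift, so we may assume from now on that $\sssp$ is atomless.

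Fix $n\ge1$, choose downsets $D(t)$ as in \refL{Lo}, and let $\cP_n=\{A_{n,i}\}_{i=1}^n$ be the partition in \eqref{Ani}, so each $\mu(A_{n,i})=1/n$. Setting $W_{k,n}\=(W_k)_{\cP_n}$ for $k=1,2$, \refL{LV2} gives $\norm{W_k-W_{k,n}}\qliss\le 4/n$. The step function $V_n\=W_{1,n}-W_{2,n}=V_{\cP_n}$ is the conditional expectation of $V$ given $\cF_{\cP_n}\times\cF_{\cP_n}$, so
\[
 \norm{V-V_n}\qliss \le 8/n \qquad\text{and}\qquad \cn{V_n}\le\cn{V}=\delta.
\]
Since $V_n$ is a $\cP_n$-step function, \refL{LW1new} yields $\norm{V_n}\qliss\le\sqrt{2n}\,\cn{V_n}\le\sqrt{2n}\,\delta$, and the triangle inequality gives the uniform bound
\[
 \norm{V}\qliss \le \sqrt{2n}\,\delta + 8/n \qquad\text{for every } n\ge 1.
\]

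Finally, optimize in $n$. Since $\delta\le 1$, the choice $n\=\ceil{\delta^{-2/3}}$ satisfies $\delta^{-2/3}\le n\le 2\delta^{-2/3}$, hence $\sqrt{2n}\,\delta\le 2\delta^{2/3}$ and $8/n\le 8\delta^{2/3}$, giving $\norm{V}\qliss\le 10\delta^{2/3}$, which is the claimed inequality. The main obstacle is not the optimization (which is routine) but the atomless reduction: one must check that the lift to $\sss\times\oi$ preserves monotonicity, measurability of the order, and both relevant norms, none of which is deep but all of which is easy to overlook.
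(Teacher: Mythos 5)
Your proposal is correct and follows essentially the same route as the paper's proof: approximate by the $\cP_n$-step kernels built from the downsets of \refL{Lo}, combine \refL{LV2} with \refL{LW1new} to get $\norm{W_1-W_2}\qliss\le\sqrt{2n}\cn{W_1-W_2}+8/n$, and optimize with $n=\bigceil{\cn{W_1-W_2}^{-2/3}}$. The only cosmetic difference is that you perform the reduction to the atomless case (via $\sss\times\oi$ with the lexicographic order) at the start rather than at the end, which changes nothing of substance.
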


\begin{proof}
  Suppose first that $\sss$ is atomless.
Let $n\ge1$ and consider the partition $\cP_n=\set{A_{ni}}_i$ defined in \eqref{Ani}
and the step
kernels $W_{k,n}=(W_k)_{\cP_n}$, $k=1,2$. 
\refL{LW1new} yields
\begin{equation}\label{v3a}
\norm{W_{1,n}-W_{2,n}}\qliss
=\norm{(W_1-W_2)_{\cP_n}}\qliss
\le \sqrt{2n}\cn{W_1-W_2}.
\end{equation}
By \refL{LV2}, we have $\norm{W_{k}-W_{k,n}}\qliss\le 4/n$, so
by the triangle inequality
\begin{equation*}
  \norm{W_1-W_2}\qliss
\le \norm{W_{1,n}-W_{2,n}}\qliss + 8/n
\le \sqrt{2n}\cn{W_1-W_2} +8/n.
\end{equation*}
The result for atomless $\sss$ now follows by choosing 
$n\=\bigceil{\cn{W_1-W_2}^{-2/3}}\le 2\cn{W_1-W_2}^{-2/3}$.
(In the case $\cn{W_1-W_2}=0$, we let \ntoo.)

If $\sss$ has atoms, we consider the atomless \ps{} $\hsss\=\sss\times\oi$
with the lexicographic order.
Let $\pi:\hsss\to\sss$ be the projection onto the first coordinate
and let $\hW_k\=W_k^\pi$ be the
extension of $W_k$ to $\hsss$.
The proof just given applies to $\hsss$, and thus
\begin{equation*}
  \norm{W_1-W_2}_{L^1(\sss^2)}
=  \norm{\hW_1-\hW_2}_{L^1(\hsss^2)}
\le   10\cn{\hW_1-\hW_2}^{2/3}
=
 10\cn{W_1-W_2}^{2/3}.
\end{equation*}
\end{proof}

\begin{example}\label{2/3best}
It is easy to see that \eqref{lv3} is tight apart from the constant. Indeed,
let $\sss$ be the discrete probability space with $n$ equiprobable elements $\{0,1,\ldots,n-1\}$,
and choose two $0/1$-valued kernels on $\sss$ with $\norm{W_1-W_2}_\liss=\Theta(1)$
and $\cn{W_1-W_2}=\Theta(n^{-1/2})$. For example, we may take kernels corresponding
to two independent instances of the random graph $G(n,1/2)$. 
Let $W$ be the function defined by $W(i,j)=i+j$. Then it is easy to see that
$W_i'=(W_i+W)/(2n)$ is a monotone kernel for each $i$.
Since $\norm{W_1'-W_2'}_\liss=\norm{W_1-W_2}_\liss/(2n)=\Theta(n^{-1})$ and
$\cn{W_1'-W_2'}=\cn{W_1-W_2}/(2n)=\Theta(n^{-3/2})$, this gives {\em monotone} kernels
$W_1'$ and $W_2'$ with $\norm{W_1'-W_2'}_\liss=\Theta(\cn{W_1'-W_2'}^{2/3})$.
\end{example}

Our next aim is to prove the rather unsurprising fact that if we start from two monotone
kernels, then `rearranging' one or both does not bring them any closer
in the $L^1$ distance. First we need a preparatory lemma; this can be viewed
as a continuous, coupling version of the trivial observation
that if we wish to minimize $\sum_{i=1}^n |a_i-b_i|$ (or, equivalently, $\sum (a_i-b_i)_+$)
where the values in each sequence are given but we are allowed to permute them,
then we should sort both sequences into ascending order.

\begin{lemma}\label{LW4}
If\/ $h_1,h_2:\sss\to\bbR$ are increasing integrable functions on an \ops\/
$\sssmp$, and 
$\gf_1,\gf_2:\sss'\to\sss$ are \mpp{} maps from a \ps{} $(\sss',\mu')$ to
$(\sss,\mu)$, then
\begin{equation}\label{iplus}
 \int_{\sss'} (h_1^{\gf_1}-h_2^{\gf_2})_+\dd\mu' \ge \int_\sss (h_1-h_2)_+\dd\mu
\end{equation}
and $\norm{h_1^{\gf_1}-h_2^{\gf_2}}_{L^1(\sss')}\ge\norm{h_1-h_2}_{L^1(\sss)}$. 
\end{lemma}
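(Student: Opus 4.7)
The plan is to prove \eqref{iplus} by a layer-cake representation and then deduce the $L^1$ bound as an immediate consequence. Writing $(a-b)_+ = \int_\bbR \ett{b < t \le a}\dd t$ for all $a, b \in \bbR$ and applying Fubini, I would rewrite both sides of \eqref{iplus} as integrals over $t \in \bbR$, reducing \eqref{iplus} to the pointwise-in-$t$ comparison
\begin{equation*}
\mu'\bigpar{\gf_1^{-1}(A_t) \cap \gf_2^{-1}(B_t)} \ge \mu(A_t \cap B_t),
\end{equation*}
where $A_t \= \set{x \in \sss : h_1(x) \ge t}$ and $B_t \= \set{x \in \sss : h_2(x) < t}$.

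The heart of the argument is that the ordered side realises the minimum possible intersection measure. Since $h_1$ and $h_2$ are increasing, $A_t$ is an up-set and $B_t$ is a down-set in $\sssp$; consequently $\sss \setminus A_t$ and $B_t$ are two down-sets, hence comparable by \refL{Lo0}(i). A short case analysis shows that either $A_t \cap B_t = \emptyset$ or $A_t \cup B_t = \sss$, and in both cases $\mu(A_t \cap B_t) = (\mu(A_t) + \mu(B_t) - 1)_+$. On the $\sss'$ side no order is available, but the trivial inclusion-exclusion bound $\mu'(E \cap F) \ge \max(0, \mu'(E) + \mu'(F) - 1)$, combined with the measure-preserving identities $\mu'(\gf_1^{-1}(A_t)) = \mu(A_t)$ and $\mu'(\gf_2^{-1}(B_t)) = \mu(B_t)$, gives exactly the required lower bound.

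Integrating the resulting pointwise inequality over $t$ yields \eqref{iplus}. For the $L^1$ claim I would use $|a-b| = (a-b)_+ + (b-a)_+$ and apply \eqref{iplus} once to $(h_1, h_2)$ and once to $(h_2, h_1)$; since $h_2$ is also increasing the hypothesis holds symmetrically. No step looks genuinely delicate: the only real content is the observation, encapsulated by \refL{Lo0}(i), that comonotone increasing level sets in an ordered probability space achieve the minimum intersection compatible with their individual measures, so any pullback to an unordered space can only enlarge the intersection.
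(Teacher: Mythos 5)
Your proposal is correct and follows essentially the same route as the paper's proof: a layer-cake/Fubini reduction to a fixed level $t$, followed by the observation that the sublevel sets of the increasing functions are downsets and hence nested by \refL{Lo0}, so the ordered side attains the elementary lower bound that holds after pulling back through the \mpp{} maps. The only cosmetic difference is that the paper phrases the per-$t$ comparison as $\mu\bigpar{B_{h_2}(t)\setminus B_{h_1}(t)}\ge\bigpar{\mu(B_{h_2}(t))-\mu(B_{h_1}(t))}_+$ for nested sublevel sets, whereas you write the same inequality in complementary form as $\mu(A_t\cap B_t)=\bigpar{\mu(A_t)+\mu(B_t)-1}_+$; the deduction of the $L^1$ statement by symmetry is likewise identical.
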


\begin{proof}
For any integrable function on any measure space we have
$\norm{h}_{L^1}=\int (h)_+ + \int (-h)_+$, so it suffices to prove the first statement.

For any function $f$ and real number $t$, let $B_f(t)\=\set{x:f(x)\le t}$.
Fubini's theorem yields
\begin{equation*}
  \begin{split}
	\int_\sss(h_1-h_2)_+\dd\mu
&=\int_\sss\intoooo\ett{h_1(x)>t\ge h_2(x)}\dd t\dd\mu(x)
\\&
=\intoooo\int_\sss\ett{x\in B_{h_2}(t)\setminus B_{h_1}(t)}\dd\mu(x)\dd t
\\&
=\intoooo\mu\bigpar{B_{h_2}(t)\setminus B_{h_1}(t)}\dd t.
  \end{split}
\end{equation*}
Similarly,
\begin{equation}\nonumber
\int_{\sss'}(h_1^{\gf_1}-h_2^{\gf_2})_+ \dd\mu'
=\intoooo\mu'\bigpar{B_{h_2^{\gf_2}}(t)\setminus B_{h_1^{\gf_1}}(t)}\dd t.
\end{equation}
Since the $\gf_i$ are measure preserving,
we have $\mu'\bigpar{B_{h_i^{\gf_i}}(t)}=\mu'\bigpar{\gf_i\qw(B_{h_i}(t))}=\mu(B_{h_i}(t))$.
Since $h_1$ and $h_2$ are increasing, $B_{h_1}(t)$ and $B_{h_2}(t)$ are downsets,
so by \refL{Lo0} they are nested. The result follows by noting
that $\mu(X\setminus Y)\ge (\mu(X)-\mu(Y))_+$, with equality if $X$ and $Y$ are nested.
\end{proof}

\begin{lemma}\label{LW6}
  If\/ $W_1$ and $W_2$ are monotone kernels on an \ops{} $\sssp$, then
$\dl(W_1,W_2)=\norm{W_1-W_2}\qliss$.
\end{lemma}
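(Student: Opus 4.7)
The plan is to prove the two inequalities $\dl(W_1,W_2)\le \norm{W_1-W_2}\qliss$ and $\dl(W_1,W_2)\ge \norm{W_1-W_2}\qliss$ separately. The upper bound is immediate from the definition \eqref{dl} by taking $\sss_3=\sss$ with $\gf_1=\gf_2$ the identity map, which is a valid coupling.

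For the reverse inequality, fix an arbitrary coupling $(\gf_1,\gf_2)$ of $\sss$ with itself on some third \ps{} $(\sss_3,\mu_3)$. The key idea is to apply \refL{LW4} once in each variable, using that a monotone kernel on $\sssp$ is, for each fixed value of one argument, an increasing function of the other. Concretely, for each fixed $y\in\sss_3$, set $a=\gf_1(y)$ and $b=\gf_2(y)$ and define $h_1(z)=W_1(z,a)$, $h_2(z)=W_2(z,b)$, which are increasing on $\sssp$. Then \refL{LW4} (applied with $\sss'=\sss_3$) gives
\begin{equation*}
\int_{\sss_3}\bigabs{W_1(\gf_1(x),\gf_1(y))-W_2(\gf_2(x),\gf_2(y))}\dd\mu_3(x)
\ge \int_\sss \bigabs{W_1(z,\gf_1(y))-W_2(z,\gf_2(y))}\dd\mu(z).
\end{equation*}
Integrating over $y\in\sss_3$ and applying Fubini, then for each fixed $z\in\sss$ applying \refL{LW4} again to the increasing functions $w\mapsto W_1(z,w)$ and $w\mapsto W_2(z,w)$, yields
\begin{equation*}
\norm{W_1^{\gf_1}-W_2^{\gf_2}}_{L^1(\sss_3^2)}
\ge \int_\sss\int_\sss \bigabs{W_1(z,w)-W_2(z,w)}\dd\mu(w)\dd\mu(z)
= \norm{W_1-W_2}\qliss.
\end{equation*}
Taking the infimum over all couplings gives the reverse inequality.

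The whole argument is essentially a reduction to \refL{LW4}; the only point requiring care is the bookkeeping that lets us apply that lemma coordinate by coordinate, which works precisely because monotonicity of a kernel in two variables implies monotonicity in each variable separately with the other one held fixed. There is no substantive obstacle beyond this double application and the Fubini interchange.
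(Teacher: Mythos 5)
Your proposal is correct and follows essentially the same route as the paper: the trivial upper bound via the identity coupling, and the lower bound by applying Lemma~\ref{LW4} once in each coordinate (with the other argument frozen, using that a monotone kernel is increasing in each variable separately), together with a Fubini interchange between the two applications.
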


\begin{proof}
  Suppose that $\gf_1,\gf_2$ are \mpp{} maps $\sss'\to\sss$ for some \ps{}
  $(\sss',\mu')$.
Then, using \refL{LW4} on each coordinate separately,
\begin{equation*}
  \begin{split}
&	{\norm{W_1^{\gf_1}-W_2^{\gf_2}}_{L^1((\sss')^2)}}
\\
&=
\int_{\sss'}\int_{\sss'}\bigabs{W_1(\gf_1(x),\gf_1(y))-W_2(\gf_2(x),\gf_2(y))}
\dd\mu'(x)\dd\mu'(y)
\\&
\ge \int_{\sss'}\int_{\sss}\bigabs{W_1(t,\gf_1(y))-W_2(t,\gf_2(y))}
\dd\mu(t)\dd\mu'(y)
\\&
\ge \int_{\sss}\int_{\sss}\bigabs{W_1(t,u)-W_2(t,u)}
\dd\mu(t)\dd\mu(u)
=\norm{W_1-W_2}\qliss,
  \end{split}
\end{equation*}
where for the last step we first apply Fubini's Theorem to change the order 
of integration.
The result follows by the definition \eqref{dl}.
\end{proof}

With a little more work, we obtain a corresponding result for the cut norm and cut metric.
Unfortunately, we need to consider a variant of the definition.

If $W$ is an integrable function on $\sssq$, let
\begin{equation}\label{cutnorm01}
 \cnone W \= \sup_{f,g:\sss\to\{0,1\}}
  \Bigabs{\int_{\sss^2} W(x,y)f(x)g(y)\dd\mu(x)\dd\mu(y)},
\end{equation}
where the supremum is over all pairs of measurable $0/1$-valued functions on $\sss$.
(We could equally well consider functions taking values in $[0,1]$; the value
of the supremum does not change.) Expressing each of the functions $f,g$ in \eqref{cutnormpm}
as the difference of two $0/1$-valued functions, we see that
\begin{equation}\label{c1c2}
 \cnone W \le \cn W \le 4\cnone W,
\end{equation}
so for all questions concerning convergence, the norms are equivalent.

In analogy with \eqref{dcut}, given $W_i\in L^1(\sss_i^2)$, $i=1,2$, let
\begin{equation}\label{dcut01}
\dcutone(W_1,W_2)
\=\inf_{\gf_1,\gf_2}\cnone{W_1^{\gf_1}-W_2^{\gf_2}},
\end{equation}
where, as in \eqref{dcut}, the infimum is taken over all couplings $(\gf_1,\gf_2)$ of $\sss_1$ and
$\sss_2$.

\begin{lemma}\label{LW6cut1}
  If\/ $W_1$ and $W_2$ are monotone kernels on an \ops{} $\sssp$, then
$\dcutone(W_1,W_2)=\cnone{W_1-W_2}$.
\end{lemma}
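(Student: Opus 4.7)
The trivial direction $\dcutone(W_1,W_2) \le \cnone{W_1-W_2}$ follows by choosing the identity coupling ($\sss_3=\sss$, $\gf_1=\gf_2=\text{id}$) in \eqref{dcut01}. The substantive task is to prove $\cnone{W_1^{\gf_1}-W_2^{\gf_2}} \ge \cnone{W_1-W_2}$ for an arbitrary pair of \mpp{} maps $\gf_1,\gf_2\colon\sss'\to\sss$; the infimum over couplings then yields the lemma. The plan is to lift the positive-part inequality \eqref{iplus} from \refL{LW4} to a cut-type norm and apply it once in each coordinate.

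As a first step I would record the following one-dimensional cut version of \refL{LW4}: for increasing integrable $h_1,h_2$ on $\sssp$ and \mpp{} $\gf_k\colon\sss'\to\sss$,
\begin{equation*}
\sup_{A'\subseteq\sss'}\Bigabs{\int_{A'}(h_1^{\gf_1}-h_2^{\gf_2})\dd\mu'} \ge \sup_{A\subseteq\sss}\Bigabs{\int_A(h_1-h_2)\dd\mu}.
\end{equation*}
This follows at once from \eqref{iplus} applied to both $(h_1,h_2)$ and $(h_2,h_1)$, combined with the elementary identity $\sup_A\abs{\int_A f\dd\mu}=\max\bigpar{\int f_+\dd\mu,\,\int(-f)_+\dd\mu}$ valid for any integrable~$f$.

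To deduce the two-dimensional bound I would iterate this inequality in the two coordinates. Fix $B'\subseteq\sss'$ and set $M_k^{B'}(t)\=\int_{B'}W_k(t,\gf_k(y))\dd\mu'(y)$; monotonicity of $W_k$ in its first variable makes each $M_k^{B'}$ increasing in $t$, and Fubini gives $\int_{A'\times B'}W_k^{\gf_k}(x,y)\dd\mu'(x)\dd\mu'(y)=\int_{A'}(M_k^{B'})^{\gf_k}\dd\mu'$. The one-dimensional inequality applied to $(M_1^{B'},M_2^{B'})$ therefore bounds $\sup_{A'}\abs{\int_{A'\times B'}(W_1^{\gf_1}-W_2^{\gf_2})\dd\mu'(x)\dd\mu'(y)}$ from below by $\sup_{A\subseteq\sss}\abs{\int_A(M_1^{B'}-M_2^{B'})\dd\mu}$. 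A second use of Fubini rewrites the latter integral as $\int_{B'}\bigpar{(N_1^A)^{\gf_1}-(N_2^A)^{\gf_2}}\dd\mu'$, with $N_k^A(t)\=\int_AW_k(s,t)\dd\mu(s)$ again increasing in $t$; a second application of the one-dimensional inequality, now with $A$ fixed, replaces $\sup_{B'}$ by $\sup_B$, and taking the final $\sup_A$ produces $\sup_{A,B}\abs{\int_{A\times B}(W_1-W_2)\dd\mu(x)\dd\mu(y)}=\cnone{W_1-W_2}$, as required.

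The main obstacle is recognising that the argument has to be carried out in both coordinates separately via the one-dimensional inequality. Monotonicity of $W_1,W_2$ is used only to ensure that both families of section functions $\set{M_k^{B'}}$ and $\set{N_k^A}$ are increasing in~$t$, which is precisely what is needed for \refL{LW4} to apply in each of the two reductions; without this, neither step would go through.
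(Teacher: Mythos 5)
Your proof is correct and follows essentially the same route as the paper's: both reduce the two-variable statement to \refL{LW4} applied once in each coordinate, using that integrating a monotone kernel against a nonnegative function of one variable yields an increasing function of the other. The only differences are organizational (you work from the coupled space down to $\sss$ set by set and handle the absolute value via $\sup_A\abs{\int_A f}=\max\bigpar{\int f_+,\int(-f)_+}$, while the paper fixes near-optimal $0/1$-valued $f,g$ on $\sss$, removes the absolute value by swapping $W_1,W_2$, and lifts $f\mapsto f'$, $g\mapsto g'$), which do not affect the substance.
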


\begin{proof}
Suppose that $\gf_1,\gf_2$ are \mpp{} maps $\sss'\to\sss$ for some \ps{} $\sss'$.
It suffices to show that $\cnone{W_1^{\gf_1}-W_2^{\gf_2}}\ge \cnone{W_1-W_2}$.

Given a probability space $(\sss,\mu)$, an integrable function $W$ on $\sss^2$,
and two functions $f,g:\sss\to\{0,1\}$, set
\begin{equation*}
 I_{f,g}(W) \= \int_{\sss^2} W(x,y)f(x)g(y)\dd\mu(x)\dd\mu(y),
\end{equation*}
so $\cnone{W} = \sup_{f,g} |I_{f,g}(W)|$. Swapping $W_1$ and $W_2$ if necessary,
we may assume that $\cnone{W_1-W_2} = \sup_{f,g}I_{f,g}(W_1-W_2)$.
Hence, fixing (arbitrary) functions $f,g:\sss\to\{0,1\}$, it suffices to prove
that
\begin{equation}\label{oaim}
 \sup_{f',g'} I_{f',g'}(W_1^{\gf_1}-W_2^{\gf_2}) \ge I_{f,g}(W_1-W_2),
\end{equation}
since $\cn{W_1^{\gf_1}-W_2^{\gf_2}}$ is at least the left-hand side.

The first statement \eqref{iplus} of \refL{LW4} says exactly that
if $h_1$ and $h_2$ are increasing, integrable functions on $\sssmp$
and $\gf_1$, $\gf_2:(\sss',\mu')\to(\sss,\mu)$ are
measure-preserving, then
\begin{equation}\label{ff'}
\begin{split}
 \max_{f':\sss'\to\{0,1\}} \int_{\sss'} \bigpar{h_1(\gf_1(x))-h_2(\gf_2(x))}f'(x)\dd\mu'(x) \\
  \ge  \max_{f:\sss\to\{0,1\}} \int_\sss \bigpar{h_1(t)-h_2(t)}f(t)\dd\mu(t),
\end{split}
\end{equation}
where the maximization is over all $\{0,1\}$-valued functions on the
relevant space; the corresponding supremum is clearly attained.
We shall use this inequality twice; in particular, we shall
twice use the observation that a specific $f$ on the right
is `beaten' by some $f'$ on the left.

Let $h_i(t)=\int_\sss W_i(t,u)g(u)\dd\mu(u)$. Then (since $g(u)$ is
non-negative), 
$h_i$ is monotone. Applying (the observation following) \eqref{ff'}
to these functions and our function $f$, we find that there is some
$f':\sss'\to \{0,1\}$ such that
\begin{equation*}
 \begin{split}
&\int_{\sss'} \left(\int_\sss \left(W_1(\gf_1(x),u)-W_2(\gf_2(x),u)\right)g(u)\dd\mu(u)\right) f'(x)\dd\mu'(x)
\\
&\ge 
\int_{\sss} \left(\int_\sss \left(W_1(t,u)-W_2(t,u)\right)g(u)\dd\mu(u)\right) f(t)\dd\mu(t) = I_{f,g}(W_1-W_2).
\end{split}
\end{equation*}
Using Fubini's Theorem, we may rewrite the left-hand side as
\begin{equation*}
 I\= \int_\sss \left(\int_{\sss'} \bigpar{W_1(\gf_1(x),u)-W_2(\gf_2(x),u)}f'(x)\dd\mu'(x)\right)g(u)\dd\mu(u).
\end{equation*}
Let $h_i'(u)=\int_{\sss'}W_i(\gf_i(x),u)f'(x)\dd\mu'(x)$. Then the $h_i'$ are again monotone,
so applying \eqref{ff'} to these functions and $g$ gives a $g':\sss'\to\{0,1\}$ such that
\begin{equation*}
 \int_{\sss'} \left(\int_{\sss'} \bigpar{W_1(\gf_1(x),\gf_1(y))-W_2(\gf_2(x),\gf_2(y))}f'(x)\dd\mu'(x)\right)g'(y)\dd\mu'(y) \ge I.
\end{equation*}
But now the left-hand side is simply $I_{f',g'}(W_1^{\gf_1}-W_2^{\gf_2})$,
so we have
$I_{f',g'}(W_1^{\gf_1}-W_2^{\gf_2}) \ge I \ge I_{f,g}(W_1-W_2)$, establishing \eqref{oaim}.
\end{proof}

In the light of \eqref{c1c2}, \refL{LW6cut1} has the following immediate corollary.
\begin{lemma}\label{LW6cut}
  If\/ $W_1$ and $W_2$ are monotone kernels on an \ops{} $\sssp$, then
$\dcut(W_1,W_2) \ge \cn{W_1-W_2}/4$.\noproof
\end{lemma}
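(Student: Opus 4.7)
The plan is to simply chain three inequalities, using the already-established Lemma \ref{LW6cut1} as the heart of the argument and the elementary bounds \eqref{c1c2} between $\cnone{\cdot}$ and $\cn{\cdot}$ to bridge between the two cut norms.

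First, I would observe that since \eqref{c1c2} gives $\cnone{W} \le \cn{W}$ pointwise for every integrable $W$, applying this to $W_1^{\gf_1}-W_2^{\gf_2}$ for every coupling $(\gf_1,\gf_2)$ and taking the infimum yields
\begin{equation*}
\dcutone(W_1,W_2) \le \dcut(W_1,W_2),
\end{equation*}
directly from the definitions \eqref{dcut} and \eqref{dcut01}. Next, I would invoke \refL{LW6cut1}, which gives the key identity $\dcutone(W_1,W_2)=\cnone{W_1-W_2}$ for monotone $W_1,W_2$ on an \ops. Finally, the other half of \eqref{c1c2}, namely $\cn{W_1-W_2}\le 4\cnone{W_1-W_2}$, rearranges to $\cnone{W_1-W_2}\ge \cn{W_1-W_2}/4$, and concatenating the three steps gives
\begin{equation*}
\dcut(W_1,W_2) \ge \dcutone(W_1,W_2) = \cnone{W_1-W_2} \ge \tfrac14 \cn{W_1-W_2},
\end{equation*}
which is the desired bound.

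There is no real obstacle here: the entire content of the lemma is packaged in \refL{LW6cut1}, and the present statement is just the translation from the $0/1$-valued cut norm to the $\pm 1$-valued one. The factor of $4$ is an artifact of this translation via \eqref{c1c2} and could presumably be improved by working directly with the $\pm 1$ cut norm, but this is not needed for any later application.
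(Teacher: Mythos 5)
Your proof is correct and is exactly the argument the paper intends: the lemma is stated as an immediate corollary of Lemma \ref{LW6cut1} ``in the light of \eqref{c1c2}'', i.e.\ chaining $\dcut(W_1,W_2)\ge\dcutone(W_1,W_2)=\cnone{W_1-W_2}\ge\cn{W_1-W_2}/4$. Nothing to add.
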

It seems plausible that $\dcut(W_1,W_2)=\cn{W_1-W_2}$ for monotone kernels, but we
do not have a proof (or indeed a strong feeling that this is actually true).

We are now ready to bound the $L^1$ distance with `rearrangement' in terms of the cut metric,
when the kernels in question are monotone.

\begin{lemma}\label{LW7}
If\/ $W_1$ and $W_2$ are monotone kernels on an \ops{} $(\sss,\mu,\prec)$, then
\begin{equation}\label{lw7}
\dl(W_1,W_2)\le
26\, \dcut(W_1,W_2)^{2/3}.
\end{equation}
\end{lemma}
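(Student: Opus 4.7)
The plan is to obtain \eqref{lw7} by simply chaining three results already established in this section, so that no new analytic work is required. Since both $W_1$ and $W_2$ are monotone kernels on the same ordered probability space $\sssmp$, \refL{LW6} identifies the rearrangement-invariant $L^1$ distance with the plain $L^1$ norm:
\begin{equation*}
  \dl(W_1,W_2) = \norm{W_1-W_2}_{L^1(\sss^2)}.
\end{equation*}

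Next, I would apply \refT{TV3} to the pair $W_1,W_2$ on $\sssp$, which gives
\begin{equation*}
  \norm{W_1-W_2}_{L^1(\sss^2)} \le 10\,\cn{W_1-W_2}^{2/3}.
\end{equation*}
Finally, \refL{LW6cut} provides a lower bound for the cut metric in terms of the cut norm when both kernels are monotone on the common space, viz.\ $\cn{W_1-W_2} \le 4\,\dcut(W_1,W_2)$. Substituting this into the previous display gives
\begin{equation*}
  \dl(W_1,W_2) \le 10\cdot 4^{2/3}\,\dcut(W_1,W_2)^{2/3}.
\end{equation*}

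The only arithmetic to be checked is that $10\cdot 4^{2/3}=10\cdot 2^{4/3}\le 26$, which holds since $2^{4/3}<2.52$. This yields \eqref{lw7} with the stated constant. There is no real obstacle here: the whole point is that Lemmas \ref{LW6} and \ref{LW6cut} allow one to relate $\dl$ and $\dcut$ to their non-rearranged counterparts $\norm{\cdot}_{L^1}$ and $\cn{\cdot}$ without loss for monotone kernels, after which the nontrivial inequality of \refT{TV3} does all of the work. The mild imprecision introduced by the factor of $4$ in \refL{LW6cut} (which could possibly be removed, as noted after that lemma) is absorbed harmlessly into the constant $26$.
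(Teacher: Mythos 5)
Your argument is exactly the paper's proof: chain \refL{LW6}, \refT{TV3} and \refL{LW6cut} to get $\dl(W_1,W_2)=\norm{W_1-W_2}\qliss\le 10\cn{W_1-W_2}^{2/3}\le 10\bigpar{4\dcut(W_1,W_2)}^{2/3}$, and since $10\cdot 4^{2/3}\approx 25.2\le 26$ the stated constant follows. The proposal is correct and identical in approach to the paper's.
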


\begin{proof}
Combining \refL{LW6}, \refT{TV3} and \refL{LW6cut}, we have
\begin{equation}\label{dldc}
 \dl(W_1,W_2) = \norm{W_1-W_2}\qliss
 \le 10\cn{W_1-W_2}^{2/3}
 \le 10(4\dcut(W_1,W_2))^{2/3},
\end{equation}
giving the result.
\end{proof}

\begin{remark}
Using \refT{Tmono} (which is proved below), 
\refL{LW7} immediately extends
to   monotone kernels defined on possibly different \ops{s}.
\end{remark}

\begin{remark}\label{RV3}
The exponent $2/3$ in \eqref{lw7} is best possible, as shown by the
kernels $W_1'$, $W_2'$ in \refE{2/3best}.
Indeed, for these kernels, the first inequality in \eqref{dldc} is tight
up to the constant. The second inequality is always tight up to the constant
$4^{2/3}$ since, by definition, $\dcut(W_1,W_2)\le\cn{W_1-W_2}$.
\end{remark}

We are now ready to prove the first few results in \refS{Smono}.

\begin{proof}[Proof of \refT{Tcwm}]
The equivalence of the different metrics in \ref{tcwm3} follows from
\refT{TV3}, Lemmas \ref{LW6} and \ref{LW6cut} (see \eqref{dldc}) and the
inequality $\dcut(W_1,W_2)\le \dl(W_1,W_2)$.

As a special case, for two kernels $W_1,W_2\in\cwm(\sss)$,
$$\dcut(W_1,W_2)=0\iff \norm{W_1-W_2}\qliss=0 \iff W_1=W_2 \text{ \aex}, 
$$
which establishes \ref{tcwm2}.

For \ref{tcwm1}, we show that $\cwm(\sss)$ is closed and totally bounded
as a subset of $L^1(\sssq)$.
First, if $W_\nu\in\cwm(\sss)$ and $W_\nu\to W$ in $L^1(\sssq)$ as \nutoo, 
then there is a subsequence that converges \aex{} to $W$, and replacing $W$
by the $\limsup$ of that subsequence, we see that $W\in\cwm(\sss)$.
Hence, $\cwm(\sss)$ is closed.

Next,
first assume that $\sss$ is atomless. By \refL{LV2}, for every $n$
there is a partition $\cP_n$ such that for every kernel
$W\in\cwm(\sss)$, 
there is a $\cP_n$-step kernel $W_n$ with
$\norm{W-W_{n}}\qliss\le4/n$. If $F_n$ is the finite set of $\cP_n$-step
kernels taking values in 
\set{0,\frac1n,\frac2n,\dots,1},
then there always exists a
$W_n'\in F_n$ with 
$\norm{W_n-W'_n}\qliss\le 1/n$, and thus 
$\norm{W-W'_n}\qliss\le 5/n$. Since $n$ is arbitrary, this shows that
$\cwm(\sss)$ 
is totally bounded.

If $\sss$ has atoms, we consider as above $\hsss=\sss\times\oi$ and
$\pi:\hsss\to\sss$; then 
$W\mapsto W^\pi$ is an isometric embedding of $L^1(\sssq)$ into
$L^1(\hsss^2)$. This embeds $\cwm(\sss)$ into $\cwm(\hsss)$, and since the
latter is totally bounded,  $\cwm(\sss)$ is too.
\end{proof}

\begin{proof}[Proof of \refT{Tmono}]
If $\sss$ has atoms, we replace it, as above, by $\hsss=\sss\times\oi$; thus
we may assume that $\sss$ is atomless. By \refL{LV2}, there is a sequence of
step kernels $W_n$ that converges to $W$ in $L^1(\sssq)$. Each $W_n$ is
obviously equivalent to the monotone step kernel $W'_n$ on $\oi$ defined by
$W_n'=\wn{ij}$ on $I_i\times I_j$, where $I_i\=((i-1)/n,i/n]$.
We have $\norm{W_n'-W_m'}\qlioi=\norm{W_n-W_m}\qliss$, and thus $(W_n')$ is
a Cauchy sequence in $\lioi$. Hence there is some $W'$ such that $W_n'\to W'$ in $\lioi$,
and \refT{Tcwm}\ref{tcwm1} implies that $W'\in\cwm(\oi)$.
For every $n$,
\begin{equation*}
\begin{split}
\dcut(W,W')&\le\dcut(W,W_n)+\dcut(W_n,W_n')+\dcut(W_n',W') \\
&\le \frac4n+0+\norm{W_n'-W'}_{L^1([0,1]^2)}.
\end{split}
\end{equation*}
Since $W_n'\to W'$ in $\lioi$, it
follows that $\dcut(W,W')=0$, so $W'$ and $W$ are equivalent.
\end{proof}

\section{Proofs of Theorems \ref{TM}--\ref{TM2}}\label{Spfmono2}


In this section we prove the remaining results in \refS{Smono},
namely, \refT{TM}, Lemmas \ref{LD2} and \ref{LD3}, and \refT{TM2}.

We start with a technical lemma, which is fairly obvious but nevertheless
deserves to be stated precisely.

\begin{lemma}\label{LX}
Suppose that $(\sss_1,\mu_1,\prec_1)$ and $(\sss_2,\mu_2,\prec_2)$ are
\ops{s}, and that $\sssqq$ is equipped with a probability measure $\mu$ such
that the projection $\pi_1$ onto $\sss_1$ is \mpp. Let $\precx_1$ be the
lexicographic order on $\sssqq$. If $W$ is a kernel on $\sss_1$, then for $j=1,2$,
\begin{equation*}
  \gO_j(W,\prec_1)=\gO_j(W^{\pi_1},\precx_1).
\end{equation*}
\end{lemma}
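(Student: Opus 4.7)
My plan is to use the $f, g$-representation of $\gOkb$ from Remark~\ref{Rcutt} and its analogue~\eqref{rcuttkc} for $\gOkc$, and to reduce the supremum over test functions on $\sssqq$ to the supremum over test functions on $\sss_1$ via conditional expectation. Two features make this reduction clean: apart from the test functions, the integrand depends on $(\tilde x, \tilde y, \tilde z)$ only through $(\pi_1 \tilde x, \pi_1 \tilde y, \pi_1 \tilde z)$; and it vanishes whenever $x_1 = y_1$, so the only portion of the lexicographic relation $\tilde x \precx_1 \tilde y$ that contributes to the integral is $x_1 \prec_1 y_1$.

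I would first handle $j = 1$. By \eqref{rcutt},
\begin{equation*}
\gOkb(W^{\pi_1}, \precx_1) = \sup_{\tilde f, \tilde g} \iiint_{\tilde x \precx_1 \tilde y} \bigpar{W(x_1, z_1) - W(y_1, z_1)}\, \tilde f(\tilde x, \tilde y)\, \tilde g(\tilde z) \dd\mu(\tilde x) \dd\mu(\tilde y) \dd\mu(\tilde z),
\end{equation*}
where $\tilde f$ and $\tilde g$ range over $[0, 1]$-valued measurable functions on $\sssqq \times \sssqq$ and $\sssqq$ respectively. By the vanishing observation, I would first replace the integration region by $\{x_1 \prec_1 y_1\}$. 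I would then introduce the (automatically $[0, 1]$-valued) conditional expectations
\begin{equation*}
f^*(x_1, y_1) \= \E_{\mu \otimes \mu}\bigsqpar{\tilde f(\tilde x, \tilde y) \mid \pi_1 \tilde x = x_1,\, \pi_1 \tilde y = y_1}, \qquad g^*(z_1) \= \E_{\mu}\bigsqpar{\tilde g(\tilde z) \mid \pi_1 \tilde z = z_1}.
\end{equation*}
Since the remaining factor $\etta_{\{x_1 \prec_1 y_1\}}\bigpar{W(x_1, z_1) - W(y_1, z_1)}$ depends only on $(\pi_1 \tilde x, \pi_1 \tilde y, \pi_1 \tilde z)$, the defining property of conditional expectation together with $(\pi_1)_* \mu = \mu_1$ collapses the triple integral to
\begin{equation*}
\iiint_{x_1 \prec_1 y_1} \bigpar{W(x_1, z_1) - W(y_1, z_1)}\, f^*(x_1, y_1)\, g^*(z_1) \dd\mu_1(x_1) \dd\mu_1(y_1) \dd\mu_1(z_1),
\end{equation*}
which by \eqref{rcutt} is bounded above by $\gOkb(W, \prec_1)$. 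Conversely, given $[0, 1]$-valued $f, g$ on $\sss_1$, the lifts $\tilde f \= f \circ (\pi_1 \times \pi_1)$ and $\tilde g \= g \circ \pi_1$ reproduce exactly the same integral on $\sssqq$, yielding the matching lower bound.

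The case $j = 2$ is identical in structure: one starts from \eqref{rcuttkc} and replaces the triple $\tilde f_1, \tilde f_2, \tilde g$ by their conditional expectations $f_1^*, f_2^*, g^*$ on $\sss_1$, with the reverse inequality again obtained by lifting. The only point of care is that one must work throughout with the $[0, 1]$-valued $f, g$-formulation of $\gO_j$ rather than the set-indexed formulation \eqref{gowa}, since the conditional expectations $f^*, g^*$ will in general be genuinely $[0, 1]$-valued even when $\tilde f, \tilde g$ are indicators; beyond this bookkeeping, there is no serious analytic obstacle.
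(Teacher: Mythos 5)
Your proof is correct and follows essentially the same route as the paper's: both use the $[0,1]$-valued $f,g$-representation of Remark~\ref{Rcutt} (and \eqref{rcuttkc} for $j=2$), replace the test functions by their conditional expectations on the pullback of the $\sigma$-field of $\sss_1$, and exploit that the integrand vanishes when $x_1=y_1$ so that the lexicographic region reduces to $\{x_1\prec_1 y_1\}$. Your explicit lifting argument for the reverse inequality is just a slightly more spelled-out version of the paper's observation that the supremum is unchanged under this replacement.
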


In most applications, we take $\mu=\mu_1\times\mu_2$.

\begin{proof}
Writing $x\in \sss\=\sssqq$ as $x=(x_1,x_2)$,
by \eqref{rcutt}, $\gOkb(W^{\pi_1},\precx_1)$ is equal to
\begin{equation}\label{qk}
\sup_{f,g} \iiint_{x\precx_1  y}\bigpar{W(x_1,z_1)- W(y_1,z_1)}
f(x,y)g(z)\dd\mu(x)\dd\mu(y)\dd\mu(z),
\end{equation}
where the supremum is over all $f:\sssq\to\oi$ and $g:\sss\to\oi$.

Let $\cF_1$ be the $\gs$-field on $\sss$ obtained by pulling back that
on $\sss_1$. Thus the
$\cF_1$-measurable functions are all functions
of the form $h(x_1,x_2)=h_1(x_1)$ for
measurable $h_1$ on $\sss_1$.
In \eqref{qk} we may replace $f$ and $g$ by their conditional expectations
given $\cF_1\times\cF_1$ and $\cF_1$, respectively.
Recalling that $\precx_1$ is lexicographic, and noting that
the integrand vanishes when $x_1=y_1$, \eqref{qk} reduces to
\begin{equation*}
\sup_{f_1,g_1} \iiint_{x_1\prec_1  y_1}\bigpar{W(x_1,z_1)- W(y_1,z_1)}
f_1(x_1,y_1)g_1(z_1)\dd\mu_1(x_1)\dd\mu_1(y_1)\dd\mu_1(z_1),
\end{equation*}
with the supremum over $f_1:\sss_1^2\to\oi$ and $g_1:\sss_1\to\oi$.
By \eqref{rcutt}, this is simply $\gOkb(W,\prec_1)$.

(In the special case when $\mu=\mu_1\times\mu_2$, the argument above
is equivalent to simply integrating over $x_2,y_2,z_2$ in \eqref{qk}.) 

For $\gOkc$, the argument is similar, using \eqref{rcuttkc} instead of \eqref{rcutt}.
\end{proof}

\begin{proof}[Proof of \refT{TM}]
Here it makes no difference whether we consider $\gOkb$ or $\gOkc$, so we
simply write $\gO$. 

If $W=W'$ \aex{} where $W'$ is monotone, then we have
$\gO(W,\prec,A)=\gO(W',\prec,A)=0$ for all $A\subseteq\sss$, and hence $\gO(W,\prec)=0$.

Conversely, suppose that $\gO(W,\prec)=0$.
Let $A,B,C,D\subseteq\sss$
have positive measures,
and suppose that $A\prec B$. Since
$\gO(W,\prec)=0$, we have $\gO(W,\prec,C)=0$ and thus by \eqref{gowa1}
$\wx C(x)\le \wx C(y)$ for \aex{} $(x,y)$ with $x\prec y$, and in particular
for \aex{} $(x,y)\in A\times B$.
Averaging over all such $(x,y)$ yields
$\bW(A,C)\le\bW(B,C)$. Similarly, by symmetry, if $C\prec D$, then
$\bW(B,C)\le\bW(B,D)$. Consequently,
letting $A\preceq B$ mean $A\prec B$ or $A=B$,
\begin{equation}\label{gfw}
\bW(A,C)\le\bW(B,D)
\qquad\text{if $A\preceq B$, $C\preceq D$}.
\end{equation}

Assuming still that $A,B,C,D\subseteq\sss$ have positive measures,
suppose that $A\prec B$ and $C\prec D$. If $A_1\subseteq A$ and $C_1\subseteq C$,
then \eqref{gfw}, applied to $A_1,B,C_1,D$, yields
\begin{equation*}
  \iint_{A_1\times C_1} W
\le (\mu\times\mu)(A_1\times C_1)\bW(B,D).
\end{equation*}
Since every measurable subset of $A\times C$ can be approximated (in
measure) by a finite disjoint union of rectangle sets $A_i\times C_i$, and
$W$ is bounded, it follows that
\begin{equation*}
  \iint_{E} W
\le (\mu\times\mu)(E)\bW(B,D)
\quad\text{for every } E\subseteq A\times C.
\end{equation*}
Taking $E\=\set{(x,y)\in A\times C: W(x,y)>\bW(B,D)}$, we obtain
$\mu\times\mu(E)=0$, and thus
\begin{equation}\label{kia1}
  W(x,y)\le\bW(B,D) 
\quad\text{\aex{} on $A\times C$ when $A\prec B$ and $C\prec D$}.
\end{equation}
Similarly, by reversing the inequalities,
\begin{equation}\label{kia2}
  W(x,y)\ge\bW(B,D) 
\quad\text{\aex{} on $A\times C$ when $A\succ B$ and $C\succ D$}.
\end{equation}

Suppose now that $\sss$ is atomless, and consider, for a given $n$, 
the partition $\cP=(A_i)_1^n$ defined in \eqref{Ani}.
By \eqref{gfw}, $W_n\=W_{\cP}$ is a monotone kernel.
By \eqref{kia1} and \eqref{kia2}, 
$W_n^-(x,y) \le W(x,y)\le W_n^+(x,y)$ \aex{} on each $A_i\times A_j$, and
thus \aex{} on $\sssq$. Further, by averaging this or directly from
\eqref{gfw},  also $W_n^-\le W_n\le W_n^+$. It follows as in the proof of
\refL{LV2} that 
\begin{equation}\label{cec}
 \norm{W_n-W}\qliss\le4/n. 
\end{equation}

Now consider the sequence $W_{2^k}$, $k\ge1$. By \eqref{cec} and the
Borel--Cantelli lemma, or by the martingale convergence theorem, 
$W_{2^k}\to W$ \aex{} as $k\to\infty$. Hence, if we define 
$W'\=\limsup_{k\to\infty} W_{2^k}$, then $W=W'$ \aex{} and $W'$ is a
monotone kernel. This completes the proof when $\sss$ is atomless.

If $\sss$ has atoms, we may either modify the argument above, or use our
standard trick of replacing $\sss$ by $\sss\times\oi$, using \refL{LX}; this
gives a monotone
kernel $W'$ on $\sss\times\oi$ with $W'((x,a),(y,b))=W(x,y)$ for \aex{}
$(x,a,y,b)\in(\sss\times\oi)^2$, 
and thus $W$ is \aex{} equal to the monotone kernel $W''$ on $\sss$ defined by
$W''(x,y)=\intoi\intoi W'((x,a),(y,b))\dd a\dd b$.
\end{proof}

\begin{proof}[Proof of \refL{LD2}] 
Let $I_i\=((i-1)/n,i/n]$ and for $A\subseteq\oi$, set $A_i\=A\cap
  I_i$. For $j=1,2$, by \eqref{gowa} and \eqref{gock}, $\gO_j(W_G,<,A)$ depends only on the numbers
  $a_i\=\mu(A_i)\in[0,1/n]$; moreover, since the function $u\mapsto u_+$ is convex,
  $\gO_j(W_G,<,A)$ is a convex function of $(a_1,\dots,a_n)$; hence it attains
  its maximum when each $a_i$ is either $0$ or $1/n$. In other words, it
  suffices to consider $A=\bigcup_{i\in B}I_i$ for some $B\subseteq V$.
In this case, it is easily seen that $\gO_j(W_G,<,A)=\gO_j(G, \prec,B)$,
noting that $\int_A W_G(x,z)\dd z=\int_A W_G(y,z)\dd z$ if $x,y\in I_i$ for
  some $i$.
The result follows by taking the maximum over $B\subseteq V$.
\end{proof}

\begin{lemma}\label{LD1}
Let $(\sss,\prec)$ be an ordered probability space, and let $j\in \{1,2\}$.
  \begin{romenumerate}
\item
If\/ $W_1,W_2\in L^1(\sssq)$, then
\begin{align*}
\gO_j(W_1+W_2,\prec,A) &\le\gO_j(W_1,\prec,A) +\gO_j(W_2,\prec,A) , \\
\gO_j(W_1+W_2,\prec) &\le\gO_j(W_1,\prec) +\gO_j(W_2,\prec) .
\end{align*}
\item
If\/ $W\in L^1(\sssq)$, then 
$\gO_j(W,\prec)\le j \cn{W}$.
\item
If\/ $W_1,W_2\in L^1(\sssq)$, then
$\bigabs{\gO_j(W_1,\prec)-\gO_j(W_2,\prec)}\le j\cn{W_1-W_2}$.
  \end{romenumerate}
\end{lemma}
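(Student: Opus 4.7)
The three parts are naturally ordered: (i) is a subadditivity property that drops straight out of the definitions; (ii) is the essential analytic estimate, reducing the variational formulas from \refR{Rcutt} to the cut norm; and (iii) follows formally from (i) and (ii). For (i), I will use that the map $W \mapsto \wa = \int_A W(\cdot,z)\dd\mu(z)$ is linear in $W$, combined with the pointwise inequality $(a+b)_+ \le a_+ + b_+$ inside the integrand of \eqref{gowa1}. This gives subadditivity of $\gOkb(\cdot,\prec,A)$ for each fixed $A$; taking the supremum over $A$ and using $\sup(F+G) \le \sup F + \sup G$ upgrades this to subadditivity of $\gOkb(\cdot,\prec)$. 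The analogous statement for $\gOkc$ then follows by applying the $\gOkb$-bound to both $A$ and $\sss\setminus A$ and summing according to \eqref{gock}.

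For (ii), I will start from the representation in \refR{Rcutt}. Fix admissible test functions $f:\sssq\to\oi$ and $g:\sss\to\oi$ in \eqref{rcutt} and split the triple integral into two pieces, one containing $W(x,z)$ and one containing $W(y,z)$. In the second piece I will swap the dummy variables $x\leftrightarrow y$, which replaces $\ett{x\prec y}$ by $\ett{y\prec x}$. Combining the two pieces produces
\begin{equation*}
\iint W(x,z) h(x) g(z) \dd\mu(x)\dd\mu(z),
\end{equation*}
where $h(x) \= \int [f(x,y)\ett{x\prec y} - f(y,x)\ett{y\prec x}]\dd\mu(y)$. Since the two integrals defining $h$ are non-negative and bounded by $\mu(\set{y:y\succ x})$ and $\mu(\set{y:y\prec x})$ respectively, and these add to at most $1$, we have $|h(x)| \le 1$. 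Hence $h,g$ are admissible test functions for \eqref{cutnorm}, and the display is bounded by $\cn W$, yielding $\gOkb(W,\prec) \le \cn W$. For $\gOkc$, split the integrand in \eqref{rcuttkc} into its two summands (corresponding to $f_1 g$ and $f_2(1-g)$), noting that $1-g$ is still $\oi$-valued, and apply the same symmetrization to each; this gives the factor of $2$.

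Finally, (iii) is immediate: writing $W_1 = W_2 + (W_1-W_2)$ and applying (i) gives $\gO_j(W_1,\prec) \le \gO_j(W_2,\prec) + \gO_j(W_1-W_2,\prec)$, and then (ii) applied to $W_1-W_2$ yields $\gO_j(W_1,\prec) - \gO_j(W_2,\prec) \le j\cn{W_1-W_2}$; swapping $W_1$ and $W_2$ completes the proof. Both the definitions in \eqref{gowa} and \eqref{gock} and the variational formulas in \refR{Rcutt} make sense for real-valued $L^1$ functions, not only for $\oi$-valued kernels, so there is no issue with $W_1 - W_2$ taking negative values. The one step that genuinely requires thought is the symmetrization in (ii) that produces the sharp constant $1$ for $\gOkb$: without combining the two halves of the split integral via the swap $x\leftrightarrow y$, a direct estimate would only yield the weaker bound $\le 2\cn W$, which is precisely what occurs for $\gOkc$ when the presence of two independent test functions $f_1, f_2$ in \eqref{rcuttkc} blocks the analogous cancellation.
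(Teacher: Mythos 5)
Your proof is correct, and parts (i) and (iii) coincide with the paper's argument (subadditivity of $(\cdot)_+$ plus the decompositions $W_1=W_2+(W_1-W_2)$ and $W_2=W_1+(W_2-W_1)$; and indeed all quantities make sense for real-valued $W\in\liss$). The interesting difference is in (ii). The paper does not pass through the variational formula \eqref{rcutt}: it works directly from \eqref{gowa1}, bounds $\bigpar{\wa(x)-\wa(y)}_+\le\abs{\wa(x)}+\abs{\wa(y)}$, and uses Fubini to collapse the double integral to $\ints\mu\set{z:z\neq x}\abs{\wa(x)}\dd\mu(x)\le\ints\abs{\wa(x)}\dd\mu(x)$, which is then recognized as $\iint W(x,y)f(x)g(y)$ with $f=\sign(\wa)$ and $g=\etta_A$, hence at most $\cn W$; the sharp constant $1$ survives the seemingly lossy absolute-value bound because $\mu\set{y:y\succ x}+\mu\set{y:y\prec x}\le1$. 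Your route instead takes \refR{Rcutt} as the starting point, keeps the exact integrand, and performs the swap $x\leftrightarrow y$ to produce the single test function $h(x)=\int\bigpar{f(x,y)\ett{x\prec y}-f(y,x)\ett{y\prec x}}\dd\mu(y)$ with $\abs{h}\le1$ — which exploits exactly the same complementarity of up-sets and down-sets, but packages it as an admissible pair $(h,g)$ for \eqref{cutnorm}. Both arguments are sound and give the constant $j$; yours is arguably more transparent about where the constant comes from, at the mild cost of leaning on the (unproved but easy) identity \eqref{rcutt} stated in \refR{Rcutt}, whereas the paper's proof is self-contained from the definition \eqref{gowa1}. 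Your treatment of $\gOkc$ (handling the $f_1g$ and $f_2(1-g)$ terms separately, or equivalently invoking \eqref{gock}) matches the paper's appeal to \eqref{gock}.
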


\begin{proof}
\pfitem{i}
An immediate consequence of the inequality $(a+b)_+\le a_++b_+$ for real $a$
and $b$, 
and the definitions \eqref{gowa}--\eqref{ngow<}.

\pfitem{ii}
By \eqref{gowa1} and Fubini's theorem,
\begin{equation*}
  \begin{split}
  \gOkb(W,\prec,A) 
&
\le 	
\iint_{x\prec  y}\Bigpar{\bigabs{\wa(x)}+\bigabs{\wa(y)}}\dd\mu(x)\dd\mu(y)
\\&
=\ints \mu\set{y:y\succ x}\bigabs{\wa(x)}\dd\mu(x) 
 + \ints \mu\set{x:x\prec y}\bigabs{\wa(y)}\dd\mu(y)
\\&
=\ints \mu\set{z:z\neq x}\bigabs{\wa(x)}\dd\mu(x) 
\le
\ints \bigabs{\wa(x)}\dd\mu(x)
\\&
=
\iint_\sssq W(x,y) f(x) g(y)\dd\mu(x)\dd\mu(y)
\le\cutnorm{W},
  \end{split}
\end{equation*}
where $f(x)\=\sign(\wa(x))$ and $g(y)\=\etta_A(y)$; the final inequality
follows from the definition \eqref{cutnorm} of the cut norm.
Now apply \eqref{gock}, if $j=2$, and take the supremum over
$A$.
\pfitem{iii}
A simple consequence of (i), applied to the sums $W_1+(W_2-W_1)$ and $W_2+(W_1-W_2)$, and (ii).
\end{proof}

The function  $\mW=\ws(x)\=\ints W(x,y)\dd\mu(y)$ is known as
the \emph{marginal} of $W$.
(There is also a second marginal, obtained by integrating over
the first variable. Here we consider only symmetric functions, so the
two marginals coincide.)
It is well known that the marginal of a kernel is the natural analogue of
the degree sequence of a graph, see \eg{} \cite{SJ238}. 
We have the following analogue of \refL{LB3X}.

\begin{lemma}\label{LA3}
Let $<$  be a (measurable) order on $\sss$ and assume that
$x<y\implies \mW(x)\le\mW(y)$. 
Then $\gOkc(W,<)=\gOkc(W)$.
\end{lemma}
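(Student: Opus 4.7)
The plan is to follow the structure of the proof of Lemma~\ref{LB3X} for graphs, using the marginal identity $\wa(x)+\wx{\sss\setminus A}(x)=\mW(x)$ in place of the degree identity $e(v,A)+e(v,V\setminus A)=d(v)$. For $x,y\in\sss$ and $A\subseteq\sss$, I would set $f(x,y,A)\=(\wa(x)-\wa(y))_+$ and $g(x,y,A)\=f(x,y,A)+f(x,y,\sss\setminus A)$; then \eqref{gowa1} and \eqref{gock} give
\begin{equation*}
 \gOkc(W,\prec,A)=\iint_{x\prec y} g(x,y,A)\dd\mu(x)\dd\mu(y)
\end{equation*}
for every measurable order $\prec$ on $\sss$.

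The central pointwise step is as follows. If $x<y$, then by hypothesis $\mW(x)\le\mW(y)$, and the marginal identity yields
\begin{equation*}
 \wa(x)-\wa(y)=\bigpar{\mW(x)-\mW(y)}+\bigpar{\wx{\sss\setminus A}(y)-\wx{\sss\setminus A}(x)}\le\wx{\sss\setminus A}(y)-\wx{\sss\setminus A}(x),
\end{equation*}
so $f(x,y,A)\le f(y,x,\sss\setminus A)$. Applying this with $A$ replaced by $\sss\setminus A$ and summing the two inequalities yields the key relation $g(x,y,A)\le g(y,x,A)$ whenever $x<y$.

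Given an arbitrary measurable order $\prec$ on $\sss$, I would then split
\begin{equation*}
 \gOkc(W,<,A)=\iint_{x<y,\,x\prec y} g(x,y,A)\dd\mu(x)\dd\mu(y)+\iint_{x<y,\,x\succ y} g(x,y,A)\dd\mu(x)\dd\mu(y),
\end{equation*}
bound $g(x,y,A)$ by $g(y,x,A)$ in the second integral, and then swap the dummy variables $x$ and $y$ to rewrite that integral as $\iint_{x>y,\,x\prec y} g(x,y,A)\dd\mu(x)\dd\mu(y)$. Combined with the first summand, this gives $\gOkc(W,<,A)\le\iint_{x\prec y} g(x,y,A)\dd\mu(x)\dd\mu(y)=\gOkc(W,\prec,A)$, using only that both $\{x\prec y\}$ and $\{x<y\}$ exclude the diagonal (and that $g(x,x,A)=0$ in any case, handling any contribution from atoms). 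Taking the supremum over $A$ and then the infimum over $\prec$ yields $\gOkc(W,<)\le\gOkc(W)$, while the reverse inequality is immediate from \eqref{ngow}. There is no substantial obstacle; the argument reduces entirely to the pointwise inequality $g(x,y,A)\le g(y,x,A)$ for $x<y$ and a routine change of variables, precisely mirroring the graph case.
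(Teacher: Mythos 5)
Your proof is correct and is essentially the paper's own argument: the paper proves this lemma precisely by saying "follow the proof of Lemma~\ref{LB3X}, replacing sums by integrals and degrees by the values of $\mW$," which is exactly the translation you carried out (pointwise inequality $g(x,y,A)\le g(y,x,A)$ for $x<y$ via the marginal identity, then splitting the integral by $\prec$ and swapping dummy variables).
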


\begin{proof}
Follow the proof of \refL{LB3X}, replacing sums by integrals and degrees
by the values of $\mW$.
\end{proof}

\begin{remark}
For $\gOkb$, it follows by \eqref{gobck} that $\gOkb(W,<)\le 2\gOkb(W)$.
The factor 2 here is best possible, just as in
\refC{CLB3}. This can be seen by taking $W=W_G$ where $G$ is the complete
bipartite graph $K_{m,m}$ considered in \refE{EB}.
\end{remark}

\begin{corollary}\label{CA3}
Let $\sss$ be a probability space and\/ $W$ a kernel on $\sss$.
Then $\gOkc(W)=0$ if and only if there exists an order $\prec$ on $\sss$ such
that $\gOkc(W,\prec)=0$.
\end{corollary}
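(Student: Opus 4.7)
The plan is to reduce Corollary~\ref{CA3} immediately to Lemma~\ref{LA3} by exhibiting a single measurable order that realizes the infimum in \eqref{ngow}, namely any order that sorts the marginal $\mW$.

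The ``if'' direction is immediate: if $\gOkc(W,\prec)=0$ for some measurable order $\prec$, then by \eqref{ngow} we have $\gOkc(W)\le\gOkc(W,\prec)=0$, and the reverse inequality is trivial since $\gOkc\ge0$.

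For the ``only if'' direction, suppose $\gOkc(W)=0$. For this infimum to be $0$ (rather than being interpreted as $+\infty$ as in Remark~\ref{RK}) there must exist at least one measurable order $<_0$ on $\sss$. I would then build the desired order by refining $<_0$ lexicographically after the marginal:
\[
x\prec y\quad\iff\quad \mW(x)<\mW(y),\ \text{ or }\ \bigl(\mW(x)=\mW(y)\text{ and }x<_0 y\bigr).
\]
One checks routinely that $\prec$ is a linear order on $\sss$ and that $x\prec y\implies\mW(x)\le\mW(y)$. Applying Lemma~\ref{LA3} to this $\prec$ yields $\gOkc(W,\prec)=\gOkc(W)=0$, which is what we wanted.

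The only point that needs a moment's care is the measurability of $\prec$ as a subset of $\sssq$. The function $\mW(x)=\int_\sss W(x,y)\dd\mu(y)$ is measurable by Fubini's theorem, hence both $\{(x,y):\mW(x)<\mW(y)\}$ and $\{(x,y):\mW(x)=\mW(y)\}$ are measurable subsets of $\sssq$; intersecting the latter with the measurable set $\{(x,y):x<_0 y\}$ and taking the union with the former gives $\{(x,y):x\prec y\}$, which is therefore measurable. This is the only real obstacle, and it is routine; once measurability is granted, Lemma~\ref{LA3} does all the work.
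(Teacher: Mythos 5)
Your proposal is correct and follows essentially the same route as the paper: the paper also defines the order $x\prec y$ when $\mW(x)<\mW(y)$ or ($\mW(x)=\mW(y)$ and $x\prec_0 y$) for a pre-existing measurable order $\prec_0$, and then applies \refL{LA3} to conclude $\gOkc(W,\prec)=\gOkc(W)=0$. Your explicit verification that $\prec$ is measurable is a welcome detail that the paper leaves implicit.
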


\begin{proof}
The `if' direction is clear. Thus, assume $\gOkc(W)=0$.
Then there exists a measurable order $\prec_0$ on $\sss$. Define an order
$\prec$ on $\sss$ by
\begin{equation}\label{ca3}
  x\prec y \quad\text{if}\quad
\mW(x)<\mW(y)
\text{ or } (\mW(x)=\mW(y) \text{ and } x\prec_0 y).	
\end{equation}
This is a measurable order to which \refL{LA3} applies, so
$\gOkc(W,\prec)=\gOkc(W)=0$.
\end{proof}
Of course, the same result for $\gOkb$ follows by \eqref{gobck}.

\begin{proof}[Proof of \refL{LD3}]
Recall that $W_G=W_{G,\prec}$ depends on a labelling of the vertices of $G$, via
the associated order $\prec$ on $V(G)$. However,
$\gOkc(W_{G,\prec})$ is independent of the order $\prec$.

For any order $\prec$ on $V=V(G)$, \refL{LD2} shows that, using $\prec$ to
define $W_G$, and writing $<$ for the standard order on $[0,1]$, we have
$\gOkc(W_G)\le\gOkc(W_G,<)=\gOc(G,\prec)$. Thus $\gOkc(W_G)\le\gOc(G)$.

Conversely, let $\prec$ be an order on $V$ such that $v\prec w\implies
d(v)\le d(w)$, and use this order to define $W_G$. Then $W_G$ satisfies the
assumption of \refL{LA3} with the standard order $<$ on $\oi$, and thus
$\gOkc(W_G,<)=\gOkc(W_G)$.
Hence, by \refL{LD2} again,
\begin{equation*}
  \gOc(G)\le\gOc(G,\prec)=\gOkc(W_G,<)=\gOkc(W_G).
\qedhere
\end{equation*}
\end{proof}

Our next lemma shows that $\gOkc$ is continuous with respect to the cut metric.

\begin{lemma}\label{Ljeppe}
If\/ $W_1$ and $W_2$ are kernels on \pss{} $\sss_1$ and $\sss_2$, and there
exists a measurable order on $\sss_1$, then 
$\gOkc(W_1)\le \gOkc(W_2)+2\dcut(W_1,W_2)$.
\end{lemma}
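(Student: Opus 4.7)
The plan is to lift the bound to a common coupling space via \refL{LD1}(iii) and then transfer it back to $\sss_1$. Fix $\eps>0$; if $\sss_2$ admits no measurable order then $\gOkc(W_2)=\infty$ by convention and the inequality is trivial, so assume otherwise. First I would choose a measurable order $\prec_2$ on $\sss_2$ with $\gOkc(W_2,\prec_2)\le\gOkc(W_2)+\eps$ and, via the coupling reformulation of $\dcut$ mentioned after \eqref{dcut}, a probability measure $\mu$ on $\sss_3\=\sss_1\times\sss_2$ with marginals $\mu_1,\mu_2$ (so that both projections $\pi_i$ are measure-preserving) satisfying $\cn{\hat W_1-\hat W_2}\le\dcut(W_1,W_2)+\eps$, where $\hat W_i\=W_i^{\pi_i}$.

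Next, equip $\sss_3$ with the lexicographic order $\precx$ taking $\prec_2$ (via $\pi_2$) as primary and the given measurable order on $\sss_1$ as tiebreaker. Applying \refL{LX} with the roles of $\sss_1$ and $\sss_2$ interchanged (valid because $\pi_2$ is measure-preserving) gives $\gOkc(\hat W_2,\precx)=\gOkc(W_2,\prec_2)$, and combining this with \refL{LD1}(iii) applied on the common space $(\sss_3,\precx)$ yields
\[
\gOkc(\hat W_1,\precx)\le\gOkc(\hat W_2,\precx)+2\cn{\hat W_1-\hat W_2}\le\gOkc(W_2)+2\dcut(W_1,W_2)+3\eps.
\]

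It remains to construct a measurable order $\prec_1$ on $\sss_1$ with $\gOkc(W_1,\prec_1)\le\gOkc(\hat W_1,\precx)$, after which $\eps\downarrow 0$ completes the argument. This is the main obstacle: \refL{LX} identifies $\gOkc(W_1,\prec_1)$ with $\gOkc(\hat W_1,\prec)$ only when $\prec$ is the lex order with $\prec_1$ primary, not with $\prec_2$ primary as in $\precx$. To bridge this I would disintegrate $\mu$ along $\pi_1$ into conditional measures $\{\nu_x\}_{x\in\sss_1}$ on $\sss_2$ and define $\prec_1$ using a real-valued statistic such as $m(x)\=\int\tau\,d\nu_x$, where $\tau(a)\=\mu_2(\{c:c\prec_2 a\})$ is the $\prec_2$-cdf, with a measurable tiebreaker. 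Expressing $\gOkc(\hat W_1,\precx,\pi_1^{-1}(A))$ as a $q(x,y)$-weighted integral over $\sss_1^2$, where $q(x,y)\=\iint\ett{a\prec_2 b}\,d\nu_x(a)\,d\nu_y(b)$, and comparing to the $\ett{x\prec_1 y}$-weighted integral for $\gOkc(W_1,\prec_1,A)$, the verification reduces to an algebraic decomposition: the antisymmetric part of the integrand equals $(\overline{W_1}(x)-\overline{W_1}(y))/2$, which allows \refL{LA3} to handle it via the marginal of $W_1$, while the symmetric parts match automatically since $\ett{x\prec_1 y}+\ett{y\prec_1 x}=1=q(x,y)+q(y,x)$.
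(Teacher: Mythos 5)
Your first half is sound and parallels the paper: pick $\prec_2$ nearly optimal for $W_2$, pick a near-optimal coupling, transfer $\gOkc(W_2,\prec_2)$ to the product space by \refL{LX} with the factors interchanged, and apply \refL{LD1}(iii) there. The gap is the bridge back to $\sss_1$. You define $\prec_1$ by the mean-rank statistic $m(x)=\int\tau\dd\nu_x$, but after your (correct) decomposition the antisymmetric part requires exactly the inequality $\iint\ett{x\prec_1 y}\bigpar{\overline{W_1}(x)-\overline{W_1}(y)}\dd\mu_1\dd\mu_1\le\iint q(x,y)\bigpar{\overline{W_1}(x)-\overline{W_1}(y)}\dd\mu_1\dd\mu_1$, and \refL{LA3} gives no purchase here: that lemma needs an order along which the marginal $\overline{W_1}$ is increasing, whereas the $m$-order is built from the coupling and $\prec_2$ alone and bears no relation to $\overline{W_1}$. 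The inequality can genuinely fail for the $m$-order, because the mean-rank order and the pairwise weights $q$ can disagree (the usual mean-versus-majority phenomenon): take $\sss_1$ two points of mass $\tfrac12$, $\sss_2$ three points $s_1\prec_2 s_2\prec_2 s_3$, and conditional laws $\nu_1=\delta_{s_2}$, $\nu_2=0.6\,\delta_{s_1}+0.4\,\delta_{s_3}$; the induced $\mu_2$-masses are $(0.3,0.5,0.2)$, so $m(1)=0.3<m(2)=0.32$ and the $m$-order puts the first point first, while $q(2,1)=0.6>q(1,2)=0.4$. For any $W_1$ with $\overline{W_1}(1)>\overline{W_1}(2)$ the antisymmetric comparison then goes the wrong way, so the claimed bound $\gOkc(W_1,\prec_1)\le\gOkc(W_1^{\pi_1},\precx)$ is unproved as written. (Two smaller points: $q(x,y)+q(y,x)=1$ fails off the diagonal when the conditional laws share atoms, so the tie-breaking term must be kept for the symmetric parts to cancel; and disintegration into $\nu_x$ needs regular conditional probabilities, or a reformulation via conditional expectations.)

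The repair is to sort $\sss_1$ by the marginal of $W_1$ itself, i.e.\ to take for $\prec_1$ the order of \refC{CA3} ($x\prec_1 y$ iff $\overline{W_1}(x)<\overline{W_1}(y)$, ties broken by the given measurable order). Then your decomposition does close: the symmetric parts cancel as before, and the antisymmetric comparison holds \emph{pointwise}, since for $f=\overline{W_1}$ the weighting $\ett{x\prec_1 y}-\ett{y\prec_1 x}$ equals $-\sign\bigpar{f(x)-f(y)}$ whenever $f(x)\neq f(y)$, which minimizes $\bigpar{p(x,y)-p(y,x)}\bigpar{f(x)-f(y)}$ over all weightings with $p(x,y)+p(y,x)=1$. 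This corrected route is in substance the paper's proof, which however stays on the coupling space and avoids disintegration altogether: it first replaces $W_1$ by a step kernel (controlling the error by \refL{LD1}(iii)), orders $\sss_1$ by the now finitely-valued marginal, and applies \refL{LA3} upstairs to $W_1^{\pi_1}$ with the lexicographic order having $\sss_1$ primary, so that this order already achieves $\gOkc(W_1^{\pi_1})$ and in particular beats the $\sss_2$-primary lexicographic order.
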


\begin{proof}
Recall that the set of step functions is dense in $L^1(\sss_1^2)$. Hence,
for any $\eps>0$, there exists a step kernel $W_1'$ on $\sss_1$ with
$\cn{W_1-W_1'}\le\norm{W_1-W_1'}_{L^1(\sss_1^2)}<\eps$.   
By \refL{LD1}(iii), replacing $W_1$ by $W_1'$ changes $\gOkc(W_1)$ by less
than $2\eps$, and the same holds for $\dcut(W_1,W_2)$. Hence, it suffices to
prove the result when $W_1$ is a  step kernel.

Consequently, assume that $W_1$ is a $\cP$-step kernel, for a finite partition
$\cP=(A_i)_i$ of $\sss_1$. Then its marginal
$\wisi$ is constant on each $A_i$, and we may assume that $A_1,A_2,\dots$ are
labelled such that $\wisi(x)\le\wisi(y)$ if $x\in A_i$, $y\in A_j$ with $i<j$.
Let $\prec_0$ be a measurable order on $\sss_1$, and define $\prec_1$ by
\begin{equation*}
x\prec_1y'\quad\text{if}\quad
x\in A_i\text{ and } y\in A_j \text{ with }
(i<j \text{ or }  (i=j \text{ and } x\prec_0y)).
\end{equation*}

Let $\prec_2$ be any measurable order on $\sss_2$. Consider a coupling
$(\pi_1,\pi_2)$ defined on $(\sssqq,\mu)$ for some $\mu$.
Let $\precx_1$ be the lexicographic order on $\sssqq$, and let $\precx_2$ be
the lexicographic order with the factors in opposite order.
By \refL{LX},
\begin{equation}\label{sw1}
  \gOkc(W_k,\prec_k)=  \gOkc(W_k^{\pi_k},\precx_k),
\qquad k=1,2.
\end{equation}
Moreover, \refL{LA3} applies to $\precx_1$ and $W_1^{\pi_1}$ and shows that
\begin{equation}\label{sw2}
  \gOkc(W_1^{\pi_1},\precx_1)
=  \gOkc(W_1^{\pi_1})
\le  \gOkc(W_1^{\pi_1},\precx_2),
\end{equation}
and by \refL{LD1}(iii),
\begin{equation}\label{sw3}
  \gOkc(W_1^{\pi_1},\precx_2)
\le \gOkc(W_2^{\pi_2},\precx_2)+2\cn{W_1^{\pi_1}-W_2^{\pi_2}}.
\end{equation}
Combining \eqref{sw1}--\eqref{sw3}, we find
\begin{equation*}
  \gOkc(W_1,\prec_1)
\le \gOkc(W_2,\prec_2)+2\cn{W_1^{\pi_1}-W_2^{\pi_2}},
\end{equation*}
and the result follows by taking the infimum over all couplings
such $(\pi_1,\pi_2)$,
\ie, over all probability measures $\mu$ with the right marginals,
and then over all orders $\prec_2$.
\end{proof}

\begin{corollary}\label{Cjeppe}
If\/  $W_1$ and $W_2$ are equivalent kernels on \ps{s} $\sss_1$ and $\sss_2$
that have 
measurable orders, then $\gOkc(W_1)=\gOkc(W_2)$,
and $\frac12\gOkb(W_2)\le\gOkb(W_1)\le2\gOkb(W_2)$.  
\end{corollary}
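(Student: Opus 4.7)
My plan is to derive the corollary as a direct consequence of \refL{Ljeppe} together with the sandwich inequality \eqref{gobck}. The key observation is that equivalence of $W_1$ and $W_2$ means $\dcut(W_1,W_2)=0$, so the error term in \refL{Ljeppe} vanishes.

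First I would apply \refL{Ljeppe} with the roles of $\sss_1$ and $\sss_2$ swapped. This is legitimate because the hypothesis of the lemma only requires the \emph{first} space to carry a measurable order, and by assumption \emph{both} $\sss_1$ and $\sss_2$ do. Using equivalence, this yields the two inequalities
\begin{equation*}
  \gOkc(W_1)\le \gOkc(W_2)+2\dcut(W_1,W_2)=\gOkc(W_2)
\end{equation*}
and, symmetrically, $\gOkc(W_2)\le\gOkc(W_1)$, proving $\gOkc(W_1)=\gOkc(W_2)$.

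For the $\gOkb$ part, I would combine this equality with the chain of inequalities \eqref{gobck}, which applies to any kernel on any ordered space. From $\gOkb(W_1)\le\gOkc(W_1)=\gOkc(W_2)\le 2\gOkb(W_2)$ I get the upper bound $\gOkb(W_1)\le 2\gOkb(W_2)$; swapping the roles of $W_1$ and $W_2$ gives $\gOkb(W_2)\le 2\gOkb(W_1)$, i.e., $\tfrac12\gOkb(W_2)\le\gOkb(W_1)$. Concatenating yields the desired two-sided bound.

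There is no real obstacle here — every nontrivial content is already carried by \refL{Ljeppe}, which in turn rests on the step-kernel approximation and the marginal-ordering trick of \refL{LA3}. The only point worth emphasising is that the symmetry argument genuinely needs the measurable-order hypothesis on \emph{both} spaces (cf.\ \refR{RK} and \refE{EK}, which warn that without such orders even $\gOkb$ may fail to be well defined on equivalence classes); this is why the statement of the corollary is phrased as it is.
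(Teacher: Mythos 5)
Your proposal is correct and follows the paper's own argument exactly: apply \refL{Ljeppe} in both directions (using $\dcut(W_1,W_2)=0$ and the measurable orders on both spaces) to get $\gOkc(W_1)=\gOkc(W_2)$, then sandwich $\gOkb$ via \eqref{gobck}. Nothing further is needed.
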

\begin{proof}
We have $\dcut(W_1,W_2)=0$; the first statement follows by \refL{Ljeppe}.
To deduce the second, use \eqref{gobck}.
\end{proof}

\begin{remark}\label{Rjeppe}
The equivalent of \refL{Ljeppe} for $\gOkb$ does not hold,
and the inequalities $\frac12\gOkb(W_2)\le\gOkb(W_1)\le2\gOkb(W_2)$
in \refC{Cjeppe} are best possible. In fact, if  $W_m\=\wv_{K_{m,m}}$ is the 
kernel defined in \refR{RD3b} for the bipartite graph $K_{m,m}$,
then $W_m$ is equivalent to $W_{K_{m,m}}$ (defined on $\oi$), but
$W_{K_{m,m}}$ is the same for all $m$.
Hence, all $W_m$ are equivalent.
Nevertheless, \refR{RD3b} and \eqref{mN2both} show that 
$\gOkb(W_m)=\gOb(K_{m,m})=(1+m\qww)/16$ if $m$ is odd, while
$\gOkb(W_m)=\gOb(K_{m,m})=1/16$ if $m$ is even.
In particular, $\gOkb(W_1)=1/8=2\gOkb(W_2)$.

On the other hand,
for kernels $W_1,W_2$
on the standard space $\sss=\oi$ (and thus for kernels on any
atomless Borel spaces),  it follows from \eqref{dcut2}  
and \refL{LD1}(iii) that 
 $ \bigabs{\gOkb(W_1)- \gOkb(W_2)}\le \dcut(W_1,W_2)$,
since clearly $\gOkb(W_2^{\gf})=\gOkb(W_2)$ for a \mpp{} bijection $\gf$.
In particular, $\gOkb(W_1)=\gOkb(W_2)$ for any two equivalent kernels on $\oi$.
Hence the unruly behaviour of $\gOkb$ is caused by the atoms.
\end{remark}

\begin{proof}[Proof of \refT{TM2}]
\ref{tm2-gO}$\implies$\ref{tm2-mono}.
We use $\gOkc$. If $\gOkc(W)=0$, then by \refC{CA3} there exists an order
$\prec$ on $\sss$ 
such that $\gOkc(W,\prec)=0$, and \refT{TM} shows that $W$ is \aex{} equal to
a monotone kernel on $(\sss,\prec)$.

\ref{tm2-mono}$\implies$\ref{tm2-equiv}. Trivial.

\ref{tm2-equiv}$\implies$\ref{tm2-gO}. 
If $W$ is equivalent to a monotone kernel $W'$ on some \ps{}
$\sss'$, then $\dcut(W,W')=0$ and $\gOkc(W')=0$, and thus $\gOkc(W)=0$ by
\refL{Ljeppe}. 

\ref{tm2-equiv}$\iff$\ref{tm2-oi} $\iff$\ref{tm2-gG}. 
By \refT{Tmono}.
\end{proof}

\section{Proof of Theorems \ref{TQ}--\ref{TQ1}}\label{SpfTQ}

After the preparation above, the proofs are simple.

\begin{proof}[Proof of \refT{TQ1}]
Let $W$ be a kernel on $\oi$ representing $\gG$, \ie, $\gG=\gG_W$ and
$\gn\to W$. Since $\gn\to W$, we have $\dcut(W_{\gn},W)\to0$.

Suppose first that $\gG\in\cum$; we then may choose $W\in\cwm$, and thus
$\gOkc(W,<)=0$ so $\gOkc(W)=0$. Then, by Lemmas \refand{LD3}{Ljeppe},
\begin{equation*}
\gOc(\gn)
= \gOkc(W_{\gn})
\le \gOkc(W)+2\dcut(W_{\gn},W)
= 2\dcut(W_{\gn},W)
\to0.
\end{equation*}
Hence $\gOc(\gn)\to0$, and by \refL{LB1}, $\gOa(\gn)\to0$ as well.

Conversely, suppose that $\gOa(\gn)\to0$, and thus by \refL{LB1}
$\gOc(\gn)\to0$. 
Then, by Lemmas \refand{Ljeppe}{LD3} again,
\begin{equation*}
\gOkc(W)\le \gOkc(W_{\gn})+2\dcut(W_{\gn},W)
= \gOc(G_{\nu})+2\dcut(W_{\gn},W)\to0,
\end{equation*}
and thus $\gOkc(W)=0$. Hence, $\gG=\gG_W\in\cum$ by \refT{TM2}.
\end{proof}

\begin{proof}[Proof of \refT{TQ}]
  If $\gOa(\gn)\to0$, then the same holds for every subsequence. Hence
  \refT{TQ1} shows that every convergent subsequence has a limit that is in
  $\cum$, which by definition says that $(\gn)$ is \qm.

Conversely, suppose that $(\gn)$ is \qm{} but
$\gOa(\gn)\not\to0$. We can then find $\eps>0$ and a subsequence along
which $\gOa(\gn)>\eps$. 
By restricting to a suitable subsubsequence, we may further assume that
$(\gn)$ converges to some limit $\gG$. By the assumption that $\gnn$ is \qm,
$\gG\in\cum$ and thus  
by \refT{TQ1}, $\gOa(\gn)\to0$ along the subsubsequence, a contradiction.
\end{proof}

\section{Quasithreshold graphs}\label{Sqt}

In the definition \eqref{go1<} of $\gOa(G,\prec)$,
we take the maximum over $A$ of the sum in \eqref{go1<a1}. If instead we
take the maximum inside the sum, then we obtain the functional
\begin{equation}\label{gox30}
  \goxa(G,\prec)
\=
\frac1{n^3}\sum_{v\prec w}
\bigabs{N(v)\setminus(N(w)\cup\set w)},
\end{equation}
since $\max_A\bigpar{e(v,A\setminus\set{w}) -e(w,A\setminus\set{v})}_+$ is
obtained by taking (for example) $A=N(v)\setminus N(w)$. 
From $\gOb$, we similarly obtain the slightly simpler functional
\begin{equation}\label{gox31}
  \goxb(G,\prec)
\=
\frac1{n^3}\sum_{v\prec w}
\bigabs{N(v)\setminus N(w)}
=\goxa(G,\prec)+O(1/n).
\end{equation}

For a kernel $W$ on an \ops{} $(\cS,\mu,\prec)$, taking
the supremum over $A$ inside the double integral in \eqref{gowa}, we define
\begin{equation}\label{goxw}
\gox(W,\prec)\=
\iiint_{x\prec y} \bigpar{W(x,z)-W(y,z)}_+\dd\mu(x)\dd\mu(y)\dd\mu(z).
\end{equation}
(Cf.\ \eqref{rcutt}.)
For any graph $G$ with an ordering $\prec$ of the vertices,
corresponding to \refL{LD2} we have
\begin{equation}\label{goxwg}
  \gox(W_G,<)=\goxb(G,\prec).
\end{equation}

Obviously, $\goxa(G,\prec)\ge\gOa(G,\prec)$, and similarly for $\goxb$
and $\gox$.

Let
\begin{align}
  \label{gox}
\gox_j(G)&\=\min_\prec \gox_j(G,\prec) \quad(j=0,1),
&
\gox(W)&\=\inf_\prec \gox(W,\prec).
\end{align}

For kernels, we can use $\gox$ instead of $\gO$ to characterize
monotonicity, \cf{} Theorems \refand{TM}{TM2}.

\begin{theorem}\label{TMx}
  Let $\sssmp$ be an ordered probability space and $W$ a kernel on
  $(\sss,\mu)$. Then $\gox(W,\prec)=0$ if and only if $W$ is \aex{} equal to
  a monotone kernel.
\end{theorem}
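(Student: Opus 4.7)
The plan is to deduce \refT{TMx} from \refT{TM} via the simple pointwise comparison $\gOkb(W,\prec) \le \gox(W,\prec)$, which holds for every kernel $W$ on any \ops{} $\sssp$.

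For the easy \emph{if} direction, suppose $W$ is \aex{} equal to a monotone kernel $W'$ on $\sssp$. Then for every $z \in \sss$ and every $(x,y)$ with $x \prec y$, monotonicity of $W'$ forces $W'(x,z) \le W'(y,z)$, so the integrand $\bigpar{W'(x,z)-W'(y,z)}_+$ in \eqref{goxw} vanishes identically. Hence $\gox(W',\prec) = 0$, and since modifying $W$ on a null set does not affect the iterated integral, $\gox(W,\prec) = 0$ as well.

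For the harder \emph{only if} direction, I would first establish the comparison $\gOkb \le \gox$. Fix a measurable $A \subseteq \sss$. For each $(x,y)$ with $x \prec y$, apply the elementary inequality $\bigpar{\int g\,\dd\mu}_+ \le \int g_+\,\dd\mu$ to $g(z) = W(x,z) - W(y,z)$ on $A$ to obtain
\[
\bigpar{\wa(x) - \wa(y)}_+ \le \int_A \bigpar{W(x,z)-W(y,z)}_+ \dd\mu(z) \le \int_\sss \bigpar{W(x,z)-W(y,z)}_+ \dd\mu(z).
\]
Integrating over $\set{(x,y):x\prec y}$ and using Fubini on the right, the definition \eqref{gowa1} of $\gOkb(W,\prec,A)$ together with \eqref{goxw} yields $\gOkb(W,\prec,A) \le \gox(W,\prec)$. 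Taking the supremum over $A$ gives $\gOkb(W,\prec) \le \gox(W,\prec)$. Thus $\gox(W,\prec) = 0$ forces $\gOkb(W,\prec) = 0$, and \refT{TM} then yields that $W$ is \aex{} equal to a monotone kernel. There is essentially no obstacle: once the bound $\gOkb \le \gox$ is noted, \refT{TMx} is an immediate corollary of \refT{TM}.
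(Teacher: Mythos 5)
Your proposal is correct and follows essentially the same route as the paper: the easy direction by noting the integrand in \eqref{goxw} vanishes a.e., and the converse by the comparison $\gOkb(W,\prec)\le\gox(W,\prec)$ (which the paper states without detail and you justify via $\bigpar{\int g\,\dd\mu}_+\le\int g_+\,\dd\mu$) combined with \refT{TM}.
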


\begin{proof}
  If $W$ is \aex{} equal to a monotone kernel, then $W(x,z)\le W(y,z)$ for
  \aex{} $(x,y,z)$ with $x\prec y$, and thus $\gox(W,\prec)=0$. The converse
  follows by \refT{TM}, since $\gOkb(W,\prec)\le\gox(W,\prec)$.
\end{proof}

\begin{theorem}
  \label{TM2x}
Let $W$
be a kernel on a \ps{} $\sss$ with at least one measurable order.
Then $\gox(W)=0$ if and only if
$W$ is \aex{} equal to a monotone kernel on $\sssp$
for some 
order $\prec$ on $\sss$.
\end{theorem}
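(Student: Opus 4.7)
The plan is to bootstrap off \refT{TM2} by means of a pointwise comparison between $\gO$ and $\gox$, so that no new order-extraction argument is needed.

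First I would handle the easy direction. If $W$ is \aex{} equal to a monotone kernel on $\sssp$ for some measurable order $\prec$, then \refT{TMx} immediately gives $\gox(W,\prec)=0$, and hence $\gox(W)\le\gox(W,\prec)=0$ by the definition \eqref{gox} of $\gox(W)$ as an infimum.

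For the converse, I would observe a basic pointwise inequality: for any measurable $A\subseteq\sss$ and any $x,y\in\sss$,
\begin{equation*}
\Bigpar{\int_A W(x,z)\dd\mu(z) - \int_A W(y,z)\dd\mu(z)}_+
\le \int_A \bigpar{W(x,z)-W(y,z)}_+\dd\mu(z)
\le \int_\sss \bigpar{W(x,z)-W(y,z)}_+\dd\mu(z).
\end{equation*}
Integrating over the region $\{(x,y):x\prec y\}$ and comparing with \eqref{gowa} and \eqref{goxw}, this yields $\gOkb(W,\prec,A)\le\gox(W,\prec)$ for every $A$ and every measurable order $\prec$. Taking the supremum over $A$ and then the infimum over measurable orders gives $\gOkb(W)\le\gox(W)$.

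Now assume $\gox(W)=0$. Then $\gOkb(W)=0$, so by \refT{TM2} (specifically, the implication \ref{tm2-gO}$\Rightarrow$\ref{tm2-mono}), there exists a measurable order $\prec$ on $\sss$ such that $W$ is \aex{} equal to a monotone kernel on $\sssp$, which is exactly what we wanted. I do not expect any real obstacle here: the entire content has been packaged into \refT{TM2} and \refT{TMx}, and the work reduces to verifying the one-line comparison $\gOkb(W,\prec)\le\gox(W,\prec)$ that licenses their application. The only thing worth a moment's care is that we never need to produce an order achieving the infimum in $\gox(W)$; the order $\prec$ is supplied by \refT{TM2} and works for $\gox$ automatically by \refT{TMx}.
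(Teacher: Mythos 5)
Your proposal is correct and follows essentially the same route as the paper: the easy direction via \refT{TMx} and the definition of $\gox(W)$ as an infimum, and the converse via the comparison $\gOkb(W)\le\gox(W)$ followed by the implication \ref{tm2-gO}$\Rightarrow$\ref{tm2-mono} of \refT{TM2}. Your explicit pointwise verification of $\gOkb(W,\prec,A)\le\gox(W,\prec)$ just spells out the inequality the paper states without further comment.
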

\refT{TM2} gives further equivalent conditions, for example that
$\gG_W$ is a monotone graph limit.

\begin{proof}
If $\gox(W)=0$, then $\gOkb(W)=0$,
since $\gOkb(W)\le\gox(W)$. Hence the conclusion follows by \refT{TM2}.

Conversely, if $W$ is \aex{} equal to a monotone kernel om $\sssp$, then
$\gox(W)\le\gox(W,\prec)=0$ by \refT{TMx}.  
\end{proof}

For a sequence of graphs, we cannot replace $\gOa$ by $\goxa$ in
\refT{TQ}. In fact, we have the following result, which shows that
$\goxa(\gn)\to0$ characterizes {\em threshold} graph limits rather than
monotone graph limits.
(Recall that threshold graph limits are the monotone
graph limits that correspond to  \oivalued{} kernels; see \refR{RLSz}.)

As usual, we define the \emph{edit distance} $\de(G,G')$ of two graphs on the same
vertex set $V(G)=V(G')$ by $\de(G,G')=\abs{E(G)\setdiff E(G')}$.
If $\cA$ is a class of graphs, then
\begin{equation}\label{dea}
  \de(G,\cA)\=\inf\bigset{\de(G,G'): G'\in\cA\text{ and } V(G')=V(G)}.
\end{equation}

\begin{theorem}
  \label{TT}
Let $(\gn)$ be a sequence of graphs with $|\gn|\to\infty$. 
Then the following are equivalent.
\begin{romenumerate}
  \item\label{tt1}
$\goxa(\gn)\to0$.
\item\label{ttt}
Every convergent subsequence of $\gnn$
has a limit that is a threshold graph limit.
\item\label{ttd}
$\de(\gn,\cT)=o\bigpar{\abs{\gn}^2}$, where $\cT$ is the class of threshold
  graphs. 
\item\label{tte}
There exists a sequence of threshold graphs $\gn'$ with $V(\gn')=V(\gn)$ and 
$\bigabs{E(\gn)\setdiff E(\gn')}=o\bigpar{\abs{\gn}^2}$.
\item\label{ttl1}
There exists a sequence of threshold graphs $\gn'$ with $V(\gn')=V(\gn)$ and 
$\norm{W_{\gn}-W_{\gn'}}\qliss=o(1)$.
\item\label{ttcn}
There exists a sequence of threshold graphs $\gn'$ with $V(\gn')=V(\gn)$ and 
$\cutnorm{W_{\gn}-W_{\gn'}}=o(1)$.
\end{romenumerate}
\end{theorem}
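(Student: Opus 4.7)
My plan is to pass through the pair (i) $\Leftrightarrow$ (ii) and (ii) $\Leftrightarrow$ (v); the remaining equivalences are routine. Specifically, (iii) $\Leftrightarrow$ (iv) unpacks the infimum in \eqref{dea}, (iv) $\Leftrightarrow$ (v) follows from $|E(\gn)\triangle E(\gn')|=\tfrac{1}{2}|\gn|^2\norm{W_{\gn}-W_{\gn'}}_\liss$ (under matching labellings), and (v) $\Rightarrow$ (vi) is the inequality $\cn{\cdot}\le\norm{\cdot}_\liss$. Moreover (vi) $\Rightarrow$ (ii): since $\dcut(W_{\gn},W_{\gn'})\le\cn{W_{\gn}-W_{\gn'}}=o(1)$ and threshold graphs are quasimonotone (with 0/1-valued monotone limits), any convergent subsequence of $(\gn)$ shares its cut-limit with a subsequence of $(\gn')$ which is automatically a threshold graph limit.

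The heart of the argument is (i) $\Leftrightarrow$ (ii), via the observation that on 0/1-valued kernels the nonlinearity in \eqref{goxw} collapses: $(W(x,z)-W(y,z))_+=W(x,z)(1-W(y,z))$, so by symmetry of $W$,
\begin{equation*}
  \gox(W,\prec) = \iint_{x\prec y}\wis(x)\,d\mu(x)d\mu(y)-\tfrac{1}{2}\int \wis(z)^2\,d\mu(z).
\end{equation*}
Call this expression $T(W,\prec)$ for arbitrary kernels, and set $T^*(W)\=\inf_\prec T(W,\prec)$. The rearrangement inequality gives $T^*(W)=\int_0^1(1-x)\wis^*(x)\,dx-\tfrac{1}{2}\int_0^1(\wis^*(x))^2\,dx$, a function of the distribution of $\wis$ alone (encoded by the star densities $t(K_{1,k},W)$), and hence continuous in $\dcut$. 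Moreover $T^*(W)=0$ forces $W$ to be a threshold graph limit: at the infimising order $\prec^*$, $W(x,z)(1-W(y,z))=0$ a.e.\ on $\{x\prec^* y\}\times\sss$ makes $\{x:0<W(x,z)<1\}$ a measure-zero $\prec^*$-antichain for each $z$, so $W$ is 0/1-valued, with monotonicity then following from \refT{TMx}; conversely for 0/1-valued monotone $W$ we have $\gox(W)=T^*(W)=0$. An analog for $\gox$ of Lemmas \refand{LD2}{LD3} (an optimal order on $\oi$ for the blockwise-constant kernel $W_{\gn}$ may be taken constant on vertex blocks, yielding $\gox(W_{\gn})=\goxb(\gn)$), together with \eqref{gox31}, gives $\goxa(\gn)=T^*(W_{\gn})+O(1/|\gn|)$. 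Combining continuity of $T^*$ with a standard compactness argument (as in the proof of \refT{TQ}) yields (i) $\Leftrightarrow$ (ii).

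For (ii) $\Rightarrow$ (v), I reduce by compactness to $\gn\to W$ for a single 0/1-valued monotone kernel $W$. The crucial point is that cut-norm convergence to a 0/1-valued target upgrades to $L^1$-convergence: from $\cn{W_n-\etta_S}\to 0$, taking $(A,B)=(S,\sss)$ and $(S^c,\sss)$ in \eqref{cutnorm} forces $\iint_S(1-W_n)\to 0$ and $\iint_{S^c}W_n\to 0$, whence $\norm{W_n-\etta_S}_\liss\to 0$. Applied to the near-optimal rearrangement in \eqref{dcut2}, this gives measure-preserving bijections $\phi_n$ with $\norm{W_{\gn}-W^{\phi_n^{-1}}}_\liss\to 0$. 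Rounding $\phi_n^{-1}$ to a permutation of the $|\gn|$ vertex blocks costs a further $O(1/|\gn|)$ in $L^1$ (since $W^{\phi_n^{-1}}$ and its block-averaged version differ only within a strip of measure $O(1/|\gn|)$ near block boundaries), and letting $\gn'$ have an edge at each vertex pair where the block-averaged value is $1$ produces a threshold graph on $V(\gn)$ (a relabelling of a sampled threshold graph from $W$) with $\norm{W_{\gn}-W_{\gn'}}_\liss\to 0$, establishing (v). The main obstacle is this last step: transferring the abstract $\lioi$-rearrangement $\phi_n$ to a combinatorial relabelling of $V(\gn)$ while preserving both the threshold property of $\gn'$ and its labelled $L^1$ (equivalently, edit) closeness to $\gn$.
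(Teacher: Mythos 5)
Your architecture is workable and parts of it are genuinely different from the paper's proof (for (i)$\iff$(ii) you replace the paper's fixed-order Lipschitz bound $\abs{\goxx(W_1,\prec)-\goxx(W_2,\prec)}\le2\cn{W_1-W_2}$ by the observation that $\inf_\prec\goxx(W,\prec)$ is a functional of the distribution of the marginal $\mW$ alone, hence a $\dcut$-continuous graph parameter; and you route (vi)$\Rightarrow$(ii) rather than (vi)$\Rightarrow$(i)). But there are two genuine gaps, both in your (ii)$\Rightarrow$(v). First, your ``crucial point'' that cut-norm convergence to a \oivalued{} target upgrades to $L^1$-convergence is true, but your justification is not: in the cut norm the test objects are products $f(x)g(y)$, equivalently rectangles $A\times B$ with $A,B\subseteq\sss$, whereas the support $S$ of the target kernel is an arbitrary symmetric subset of $\sss^2$, so ``taking $(A,B)=(S,\sss)$'' is not an admissible choice and $\iint_S(1-W_n)\to0$ does not follow from $\cn{W_n-\etta_S}\to0$ in one line. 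The correct argument approximates $\etta_S$ in $L^1$ by a \oivalued{} step kernel with $N$ parts and pays a factor $N^2$ on the cut norm (this is exactly the paper's Lemmas \ref{LT2} and \ref{LT3}); without some such device the step fails.

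Second, the terminal step of (ii)$\Rightarrow$(v) --- turning the $L^1$-close rearranged kernel $W^{\phi_n^{-1}}$ into an actual threshold graph $\gn'$ on $V(\gn)$ --- is not established, and the one quantitative claim you make there is false: for an arbitrary \mpb{} $\phi_n$ of $\oi$, the kernel $W^{\phi_n^{-1}}$ is in no way close to constant on the vertex blocks, so it does not differ from its block average only on a strip of measure $O(1/|\gn|)$ near block boundaries; moreover, even granting a rounding, you must still show the rounded graph is a threshold graph. You yourself flag this as the main obstacle, so it is an acknowledged missing idea rather than an oversight, but it is precisely where the work lies. The paper avoids both difficulties at once: it takes $\gn'$ to be genuine threshold graphs converging to the same limit (e.g.\ $W$-random graphs for the \oivalued{} monotone representative $W$, which are automatically threshold graphs), relabels both $\gn$ and $\gn'$ using \cite[Lemma 5.3]{BCLSV1} so that $\cn{W_{\gn}-W}\to0$ and $\cn{W_{\gn'}-W}\to0$ with the \emph{fixed} target $W$, and then applies \refL{LT3} to each sequence and the triangle inequality in $L^1$. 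If you repair the upgrade step as in \refL{LT2} and replace your rounding construction by this sampling-plus-relabelling argument, your overall scheme (including the $T^*$ route to (i)$\iff$(ii), which is sound modulo writing out the quantile-convergence details) goes through.
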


We say that a sequence  $(\gn)$ of graphs with $\abs{\gn}\to\infty$ 
is \emph{quasithreshold} if it satisfies one, and thus all, of the
conditions in \refT{TT}.

As a special case of the equivalence (i)$\iff$(ii), we see that if
$\gn\to\gG$, then $\gG$ is a threshold graph limit if and only if
$\goxa(\gn)\to0$; \cf{} \refT{TQ1}.

The proof of \refT{TT} is simpler than the proof of \refT{TQ}, but we will
nevertheless need some other results first.
One complication is that there is no analogue of \refL{LD1}(iii); as is
shown by the following example, $\gox(W,\prec)$ is not continuous for the 
cut norm.

\begin{example}
  \label{Ediscont}
Let $W=1/2$ be constant on $\oii$, and let $\gnxx$ be a sequence of graphs
with $|\gnx|=n$ and $\gnx\to W$, \ie, $\gnxx$ is a sequence of quasirandom graphs.
(E.g., let $\gnx$ be random graphs $G(n,1/2)$.)
Then, for every $\eps>0$,
$\bigabs{\abs{N(v)\setminus N(w)}-n/4}\le\eps n$ for all but $o(n^2)$ pairs
$(v,w)\in V_{\gnx}^2$, and thus for any order $\prec$,
$\bigabs{n^3\goxb(\gnx,\prec)-n^3/8}\le\eps n^3+o(n^3)$,
so
$\bigabs{\goxb(\gnx,\prec)-1/8}\le\eps+o(1)$.
Since $\eps$ is arbitrary, it follows that
\begin{equation*}
  \gox(W_{\gnx})=\goxb(\gnx)\to\tfrac18
\neq0=\gox(W),
\end{equation*}
although $\cutnorm{W_{\gnx}-W}\to0$.
\end{example}

$\gox$ is obviously continuous in the stronger $L^1$ norm.
It is possible to prove \refT{TT} using this fact and \refL{LT3} below, 
but it is simpler to use
another extension of $\goxb$ to kernels.

\begin{definition}
  If $(\sss,\mu)$ is an atomless probability space and $\prec$ an order on
  $\sss$, let
  \begin{equation}\label{goxx}
\goxx(W,\prec)\=\iiint_{x\prec y}
	W(x,z)\bigpar{1-W(y,z)}\dd\mu(x)\dd\mu(y)\dd\mu(z). 
  \end{equation}
If $\sss$ has atoms, we add half the integral over $x=y$ (and any $z$), i.e.,
we add $\frac12\iint W(x,z)\bigpar{1-W(x,z)}\mu\set x\dd\mu(x)\dd\mu(z)$.
\end{definition}

The definition in the case that $\sss$ has atoms is such that
$\goxx(W,\prec)=\goxx(\hW,\hprec)$, where $\hW$ is the extension of $W$
to the atomless probability space $\hsss\=\sss\times \oi$ and $\hprec$ is
the lexicographic order on $\hsss$.

Note that if $W$ is \oivalued, then $\goxx(W,\prec)=\gox(W,\prec)$. In
particular, for any graph with an order $\prec$ on $V=V(G)$, by \eqref{goxwg},
\begin{equation}\label{goxxg}
  \goxb(G,\prec)=\gox(W_G,<)=\goxx(W_G,<).
\end{equation}

For our purposes $\goxx$ is better than $\gox$  in two different ways.
The first is that, unlike $\gox$, $\goxx$ is continuous with respect to the cut norm. 
Before proving this, we recall a basic property of the cut norm.
(See \eg{} \cite{SJ249} for a proof.)

\begin{lemma}\label{L0}
If\/ $W\in\liss$, then $\norm{\mW}\qlis\le\cutnorm{W}$.
\noproof
\end{lemma}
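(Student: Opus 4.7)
The plan is to prove the inequality by exhibiting an explicit admissible pair $(f,g)$ in the supremum defining the cut norm that achieves $\|\bar W\|_{L^1(\sss)}$. Recall
\begin{equation*}
 \cn W = \sup_{\normoo{f},\normoo{g}\le1}
  \Bigabs{\int_{\sss^2} W(x,y)f(x)g(y)\dd\mu(x)\dd\mu(y)},
\end{equation*}
so it suffices to find one such pair $(f,g)$ for which the absolute value on the right equals $\|\bar W\|_{L^1(\sss)}$.

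The natural choice is $g\equiv 1$ (which clearly satisfies $\normoo{g}\le 1$) and $f(x)\=\sign(\bar W(x))$ (which takes values in $\{-1,0,1\}$ and is measurable, so $\normoo f\le 1$). With this choice, Fubini's theorem gives
\begin{equation*}
 \int_{\sss^2} W(x,y)f(x)g(y)\dd\mu(x)\dd\mu(y)
 =\int_{\sss} f(x)\bar W(x)\dd\mu(x)
 =\int_{\sss}\bigabs{\bar W(x)}\dd\mu(x)
 =\normll{\bar W}.
\end{equation*}
Since this quantity is a non-negative real number, it equals its own absolute value, and hence by the definition of $\cn W$ as a supremum we conclude $\normll{\bar W}\le\cn W$.

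There is no substantive obstacle. The only thing worth a brief check is that $f=\sign(\bar W)$ is measurable (it is, since $\bar W$ is measurable by Fubini's theorem applied to the integrable function $W$) and has $\normoo f \le 1$, which is immediate.
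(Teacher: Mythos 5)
Your proof is correct and is the standard argument: the paper states this lemma without proof (deferring to \cite{SJ249}), and the choice $g\equiv1$, $f=\sign(\mW)$ is exactly the device used there and also within this paper itself (e.g.\ in the proof of \refL{LD1}(ii)). Nothing further is needed.
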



Recall that, by definition, a kernel $W$ takes values in $[0,1]$.

\begin{lemma}\label{LT1}
Let $(\sss,\prec)$ be an ordered probability space.
If\/ $W_1$ and $W_2$ are kernels on $\sss$, then
$\bigabs{\goxx(W_1,\prec)-\goxx(W_2,\prec)}\le2\cn{W_1-W_2}$.
\end{lemma}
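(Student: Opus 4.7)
My plan is to rewrite $\goxx(W,\prec)$ in terms of the one-variable marginal $\mW(x)=\int W(x,y)\dd\mu(y)$ of $W$ alone (plus a fixed function depending only on $\prec$), so that the difference in question becomes an $L^1$-inner product that can be bounded via \refL{L0}.

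First I would reduce to the atomless case. The atom-correction in the definition of $\goxx$ is arranged precisely so that $\goxx(W,\prec) = \goxx(\hW,\hprec)$, where $\hW$ is the pullback of $W$ to the atomless space $\hsss = \sss\times\oi$ (with product measure) and $\hprec$ is the lexicographic order; since pullback by a \mpp{} map preserves the cut norm, $\cn{\hW_1-\hW_2}=\cn{W_1-W_2}$, so it suffices to prove the inequality when $\sss$ is atomless.

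Working on an atomless $\sssmp$ and writing $\Phi(x)\=\mu\{y:y\succ x\}\in[0,1]$, I expand $W(x,z)(1-W(y,z))=W(x,z)-W(x,z)W(y,z)$ and handle each summand. Fubini gives $\iiint_{x\prec y}W(x,z)\dd\mu=\int \mW(x)\Phi(x)\dd\mu(x)$. For the quadratic part, symmetry of $W$ yields $\iiint_{x\prec y}W(x,z)W(y,z)\dd\mu=\iiint_{y\prec x}W(x,z)W(y,z)\dd\mu$; since $\ett{x\prec y}+\ett{y\prec x}=1$ \aex\ in the atomless case, each equals $\tfrac12\iiint W(x,z)W(y,z)\dd\mu=\tfrac12\int \mW(z)^2\dd\mu(z)$. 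Combining,
\begin{equation*}
 \goxx(W,\prec) = \int \mW(x)\Phi(x)\dd\mu(x) - \tfrac12\int \mW(z)^2\dd\mu(z).
\end{equation*}
Applying this to $W_1$ and $W_2$ with marginals $\wis,\wiis$ and subtracting, the identity $\wis^2-\wiis^2=(\wis+\wiis)(\wis-\wiis)$ gives
\begin{equation*}
 \goxx(W_1,\prec)-\goxx(W_2,\prec) = \int (\wis-\wiis)(x)\bigsqpar{\Phi(x)-\tfrac12\bigpar{\wis(x)+\wiis(x)}}\dd\mu(x).
\end{equation*}
Since $\Phi,\wis,\wiis$ all take values in $[0,1]$, the bracket is pointwise in $[-1,1]$, and $\wis-\wiis$ is the marginal of $W_1-W_2$. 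Hence \refL{L0} yields
\begin{equation*}
 \bigabs{\goxx(W_1,\prec)-\goxx(W_2,\prec)}\le \norm{\wis-\wiis}\qlis \le \cn{W_1-W_2}\le 2\cn{W_1-W_2},
\end{equation*}
as claimed (in fact with room to spare).

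The main conceptual point is realizing that, despite the apparent genuine dependence of $\goxx(W,\prec)$ on the joint pair $(W,\prec)$ as a function on $\sss^3$, the order-symmetry collapses the whole functional to an expression involving only the one-variable marginal $\mW$ together with the fixed function $\Phi$. A naive alternative---expanding $W_1(x,z)(1-W_1(y,z))-W_2(x,z)(1-W_2(y,z))$ directly and trying to bound each of the resulting triple integrals separately by $\cn{W_1-W_2}$---fails: the remaining kernel appears in the integral as a weight depending on both $y$ and $z$, which is not the rank-one product structure $f(y)g(z)$ that the cut norm controls. The marginal reduction circumvents this obstruction.
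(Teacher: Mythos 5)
Your proof is correct and follows essentially the same route as the paper: the same reduction to the atomless case, the same rewriting of $\goxx(W,\prec)$ as a linear-in-the-marginal term minus $\tfrac12\int\mW^2$ (the paper's formula \eqref{er}), and the same appeal to \refL{L0}. The only difference is that you integrate out $z$ in the linear term and combine the two pieces into a single integral against $\wis-\wiis$ before bounding, which even yields the sharper constant $1$ in place of the paper's $2$ (the paper instead bounds the linear term directly by $\cn{W_1-W_2}$ via the definition of the cut norm).
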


\begin{proof}
  We may assume that $\sss$ is atomless. (Otherwise we consider  
$\sss\times\oi$.) In this case, writing $U_x$ for $\set{y:y\succ  x}$,
we have the alternative formula
\begin{equation}\label{er}
  \begin{split}
\goxx(W,\prec)
&=\iint W(x,z)\mu(U_x) \dd\mu(x)\dd\mu(z)
\\&\hskip4em{}
-\iiint_{x\prec y} W(x,z)W(y,z)\dd\mu(x)\dd\mu(y)\dd\mu(z)
\\&
=\iint \mu(U_x)W(x,z) \dd\mu(x)\dd\mu(z)
\\&\hskip4em{}
-\frac12\iiint W(x,z)W(y,z)\dd\mu(x)\dd\mu(y)\dd\mu(z)
\\&
=\iint \mu(U_x)W(x,z) \dd\mu(x)\dd\mu(z)
-\frac12\int \mW(z)^2\dd\mu(z) .
  \end{split}
\end{equation}
By the definition \eqref{cutnorm} of the cut norm,
\begin{equation*}
\lrabs{\iint \mu(U_x)\bigpar{W_1(x,z)-W_2(x,z)} \dd\mu(x)\dd\mu(z)	}
\le\cn{W_1-W_2}.
\end{equation*}
Recalling that $|W_j|\le1$ and using \refL{L0} on $W_1-W_2$, 
\begin{multline*}
\lrabs{\ints \bigpar{\wis(z)^2- \wiis(z)^2}\dd\mu(z)}
\\
=\lrabs{\ints\bigpar{\wis(z)- \wiis(z)}\bigpar{\wis(z)+ \wiis(z)}\dd\mu(z)}
\\
\le 2\norm{\wis(z)- \wiis(z)}\qlis
\le2\cn{W_1-W_2}.  
\end{multline*}
Applying \eqref{er} to $W_1$ and $W_2$, the result follows.
\end{proof}

\begin{theorem}\label{TTx}
  Let $(\sss,\prec)$ be an ordered probability space and $W$ a kernel on
  $(\sss,\prec)$. Then $\goxx(W,\prec)=0$ if and only if $W$ is \aex{} equal to
 a \oivalued{} monotone kernel.
\end{theorem}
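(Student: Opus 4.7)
The plan is to treat the biconditional in two halves: the easy implication follows from a direct pointwise check, while the converse reduces via \refT{TMx} to the task of showing that a monotone kernel $W$ with $\goxx(W,\prec)=0$ is automatically \oivalued. For the easy direction, if $W$ agrees \aex{} with a \oivalued{} monotone kernel $W'$, then at any $(x,y,z)$ with $x\prec y$ the value $W'(x,z)$ is either $0$ (so the integrand in \eqref{goxx} vanishes on the spot) or $1$, in which case monotonicity forces $W'(y,z)=1$ and again kills the integrand. The atom correction vanishes identically since $t(1-t)=0$ for $t\in\setoi$.

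For the converse, I would first note the pointwise inequality $(a-b)_+\le a(1-b)$ valid for $a,b\in\oi$ (immediate by splitting on whether $a\le b$), which yields $\gox(W,\prec)\le\goxx(W,\prec)=0$. \refT{TMx} then produces a monotone kernel \aex{} equal to $W$; replacing $W$ by this kernel leaves $\goxx(W,\prec)$ unchanged, so I may assume $W$ itself is monotone. It remains to upgrade monotonicity to \oivaluedness.

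The core argument is a slice-wise analysis, carried out first on an atomless $\sss$. The vanishing of the integrand in \eqref{goxx} forces $W(x,z)\bigpar{1-W(y,z)}=0$ for \aex{} $(x,y,z)$ with $x\prec y$. Fix a generic $z$ and write $f(x)\=W(x,z)$; then $f$ is monotone and satisfies $f(x)\bigpar{1-f(y)}=0$ for \aex{} pair with $x\prec y$. Setting $A\=\set{f>0}$ (an upset) and $B\=\set{f<1}$ (a downset), this says $(\muu)\set{(x,y)\in A\times B:x\prec y}=0$. Since $A\cap B\subseteq A$ and $A\cap B\subseteq B$, the same holds for $(A\cap B)^2\cap\set{x\prec y}$; relabeling $(x,y)$ gives the analogous statement on $\set{x\succ y}$. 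Thus $(A\cap B)^2$ is supported on the diagonal, and atomlessness forces $\mu(A\cap B)=0$. Hence $f\in\setoi$ \aex, and Fubini yields $W\in\setoi$ \aex.

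To handle atoms I would invoke the standard lex-extension trick already built into the definition of $\goxx$: the extension $\hW((x,s),(y,t))\=W(x,y)$ to $\hsss\=\sss\times\oi$ under lexicographic order $\hprec$ satisfies $\goxx(\hW,\hprec)=\goxx(W,\prec)=0$, and the atomless case delivers a \oivalued{} monotone kernel $\widetilde W$ on $\hsss$ that is \aex{} equal to $\hW$. Because $\hW$ is constant in the $\oi$-coordinates, $\widetilde W$ is \aex{} constant---and hence equal to a value in $\setoi$---on each $(\set x\times\oi)\times(\set y\times\oi)$-fiber, so averaging over those coordinates yields a \oivalued, monotone $W'$ on $\sss$ with $W'=W$ \aex. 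The main obstacle I expect is the slice-level conclusion $\mu(A\cap B)=0$; this is small but essential, resting on linearity of $\prec$ (so that distinct points of $A\cap B$ must be comparable and therefore produce a forbidden pair) together with atomlessness. It is this step that does the real work of converting the seemingly weak identity $\goxx(W,\prec)=0$ into the rigid structural conclusion of \oivaluedness.
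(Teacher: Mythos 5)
Your proposal is correct and follows essentially the same route as the paper: both directions hinge on showing that $\goxx(W,\prec)=0$ forces $W$ to be \aex{} \oivalued{} (the paper does this with the quantitative sets $E_a=\{a\le W\le 1-a\}$ and the bound $\goxx\ge\ints\tfrac12a^2\mu(E_a(z))^2\dd\mu(z)$, while you argue slice-wise with the supports $\{f>0\}$, $\{f<1\}$, swap-symmetry and the null diagonal, and you invoke \refT{TMx} up front via $(a-b)_+\le a(1-b)$ rather than after establishing \oivalued ness --- cosmetic variations of the same argument). The only micro-step left implicit (in both the atomless and the atom case) is the final replacement of the monotone, \aex{} \oivalued{} kernel by the genuinely \oivalued{} monotone kernel $\ett{W'>0}$, which is exactly the one-line step the paper performs at the end.
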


\begin{proof}
As usual, we may assume for simplicity that $\sss$ is atomless.
Suppose first $\goxx(W,\prec)=0$. 
For $a>0$,  let $E_a\=\set{(x,y)\in\sssq: a\le W(x,y)\le 1-a}$,
and, for $z\in\sss$, let $E_a(z)\=\set{x\in\sss:(x,z)\in E_a}$ be the
corresponding section.

If $x,y\in E_a(z)$, then $W(x,z)(1-W(y,z))\ge a^2$, and thus, 
for each $z$,
\begin{multline*}
\iint_{x\prec y} W(x,z)\bigpar{1-W(y,z)}\dd\mu(x)\dd\mu(y)
\\
\ge a^2\muu \bigset{(x,y)\in E_a(z)^2:x\prec y}
=\frac12 a^{2} \mu(E_a(z))^2.
\end{multline*}
Hence,
\begin{equation*}
  0=\goxx(W,\prec)\ge\ints \frac12 a^2\mu(E_a(z))^2\dd\mu(z),
\end{equation*}
and thus $\mu(E_a(z))=0$ for \aex{} $z$, so
$\mu\times\mu(E_a)=\ints\mu(E_a(z))\dd\mu(z)=0$. 
Consequently, $E_a$ is a null set for every $a>0$.
Hence, $W(x,y)\in\setoi$ a.e.
Thus $W$ is \aex{}  \oivalued, which implies that
$\gox(W,\prec)=\goxx(W,\prec)=0$; hence \refT{TMx} shows that $W$ is \aex{}
equal to a monotone kernel $W'$. Finally, $W'$ is \aex{} \oivalued, and thus
\aex{} equal to the \oivalued{} monotone kernel $\ett{W'> 0}$.

The converse is obvious.
\end{proof}

We also have an analogue of \refL{LA3}.
To prove this, we shall need the following `rearrangement' inequality.

\begin{lemma}\label{LTA3rearr}
Let $\prec$ and $<$ be two orders on an atomless probability space $\sss$,
and let $f$ 
be a bounded function on $\sss$.
If $x<y\implies f(x)\le f(y)$, then 
$\iint_{x\prec y}f(x)\dd\mu(x)\dd\mu(y) \ge 
\iint_{x< y}f(x)\dd\mu(x)\dd\mu(y)$.
\end{lemma}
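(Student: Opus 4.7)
The plan is to reduce the difference of the two integrals to a single integral over a set on which the hypothesis makes the integrand pointwise non-negative, via a symmetry swap. Since $\sss$ is atomless, the diagonal $\{(x,y):x=y\}$ is a $\mu\times\mu$-null set, so modulo null sets $\sssq$ is partitioned into the four sets determined by the two strict orders. Define
\[
A\=\{(x,y)\in\sssq:x\prec y,\ y<x\},\qquad
B\=\{(x,y)\in\sssq:x<y,\ y\prec x\}.
\]
A direct case check on the four combinations shows that
\(\etta\{x\prec y\} - \etta\{x<y\} = \etta_A - \etta_B\) a.e., and therefore
\[
\iint_{x\prec y}f(x)\dd\mu(x)\dd\mu(y) - \iint_{x<y}f(x)\dd\mu(x)\dd\mu(y)
= \iint_A f(x)\dd\mu(x)\dd\mu(y) - \iint_B f(x)\dd\mu(x)\dd\mu(y).
\]

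Now apply the measure-preserving involution $(x,y)\mapsto(y,x)$ of $\sssq$. This sends $B$ bijectively onto $A$ (by inspection of the two defining conditions) and transforms $f(x)$ into $f(y)$, so
\[
\iint_B f(x)\dd\mu(x)\dd\mu(y) = \iint_A f(y)\dd\mu(x)\dd\mu(y).
\]
Substituting this into the previous display collapses the difference to a single integral:
\[
\iint_{x\prec y}f(x)\dd\mu(x)\dd\mu(y) - \iint_{x<y}f(x)\dd\mu(x)\dd\mu(y)
= \iint_A\bigpar{f(x)-f(y)}\dd\mu(x)\dd\mu(y).
\]
On $A$ we have $y<x$, so the hypothesis $x<y\Rightarrow f(x)\le f(y)$ gives $f(y)\le f(x)$; the integrand is therefore pointwise non-negative and the integral is $\ge 0$, which is the desired inequality.

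There is no real obstacle: the whole proof is the observation that the `discrepancy' between the two orders is antisymmetric under $(x,y)\leftrightarrow(y,x)$, and that $<$ is precisely the order that makes $f(x)-f(y)$ have a definite sign on $A$. The atomless hypothesis is used only to discard the diagonal; the measurability of both orders ensures that $A$ and $B$ are measurable so Fubini applies without issue.
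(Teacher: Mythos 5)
Your proof is correct, and it takes a genuinely different route from the paper. You work directly with the discrepancy between the two order indicators: off the diagonal, $\etta\set{x\prec y}-\etta\set{x<y}=\etta_A-\etta_B$ with $A=\set{x\prec y,\,y<x}$ and $B=\set{x<y,\,y\prec x}$, and the measure-preserving swap $(x,y)\mapsto(y,x)$ carries $B$ onto $A$, collapsing the difference of the two integrals to $\iint_A\bigpar{f(x)-f(y)}$, which is pointwise non-negative since $y<x$ on $A$. The paper instead uses the downset machinery of Lemmas \ref{Lo0} and \ref{Lo}: it writes $\iint_{x\prec y}f(x)\dd\mu(x)\dd\mu(y)=\intoi\ga_\prec(t)\dd t$, where $\ga_\prec(t)$ is the integral of $f$ over the $\prec$-downset of measure $t$, and then observes that $\ga_<(t)$ is the minimum of $\int_E f\dd\mu$ over all sets $E$ of measure $t$ (a bathtub-type rearrangement argument), so $\ga_<(t)\le\ga_\prec(t)$ for every $t$. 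Your argument is shorter and more elementary: it needs neither Lemma \ref{Lo} nor, in fact, the atomless hypothesis at all (note that the diagonal is excluded from both strict-order regions anyway, so both sides of your indicator identity vanish there and no null-set argument is even required); it thus proves the inequality for arbitrary probability spaces with measurable linear orders, and for any integrable $f$. What the paper's approach buys is the rearrangement viewpoint — the explicit identification of the $\prec$-integral with $\intoi\ga_\prec(t)\dd t$ and of $<$ as the minimizing order — which fits naturally with the downset apparatus already developed there, but at the cost of invoking atomlessness.
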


\begin{proof}
Consider first one arbitrary order $\prec$.
Let $D_y\=\set{x:x\prec y}$ and set $\gf(y)\=\mu(D_y)$,
and let $D(t)$ be as in \refL{Lo}.
Then $D_y$ and $D(\gf(y))$ are two downsets with the same
measure, and thus they differ only by a null set, \cf{} \refL{Lo0}. 

Let $F(y)\=\int_{x\prec y} f(x)\dd\mu(x)$ and define $\ga(t)\=\int_{D(t)}f(x)\dd\mu(x)$. 
Then
\begin{equation*}
  F(y)=\int_{D_y} f = \int_{D(\gf(y))} f = \ga(\gf(y)).
\end{equation*}
It was noted in the proof of \refL{Lo} that if $X$ has distribution $\mu$,
then $\gf(X)$ has distribution $U(0,1)$. Equivalently,
the function $\gf:\sss\to\oi$ maps $\mu$ to the uniform measure on \oi. 
Hence,
\begin{equation*}
\iint_{x\prec y}f(x)\dd\mu(x)\dd\mu(y) 
=\ints F(y)\dd\mu(y)  
=\ints \ga(\gf(y))\dd\mu(y)  
=\intoi \ga(t)\dd t.
\end{equation*}
Now write $\ga=\ga_\prec$ and compare $\ga_\prec(t)$ and $\ga_<(t)$. Both
are integrals of $f$ over sets of measure $t$, and for $\ga_<$ the set is
such that if $x$ is in the set and $y$ is not, then $x<y$ and thus $f(x)\le
f(y)$.
It follows easily
that $\ga_<(t)$ is the minimum of $\int_E f\dd\mu$ over all set
$E$ of measure $t$, and thus in particular $\ga_<(t)\le\ga_\prec(t)$ for any
other order $\prec$. Consequently,
$\intoi\ga_<(t)\dd t \le \intoi\ga_\prec(t)\dd t$, and the result follows.
\end{proof}

\begin{lemma}
  \label{LTA3}
Let $<$  be a (measurable) order on $\sss$ and assume that
$x<y\implies \mW(x)\le\mW(y)$. 
Then, 
$\goxx(W,<)=\goxx(W)$.
\end{lemma}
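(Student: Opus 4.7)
The plan is to reduce the minimization of $\goxx(W,\prec)$ over orders $\prec$ to a one-variable rearrangement problem via the alternative formula \eqref{er} for $\goxx$, and then invoke \refL{LTA3rearr}.

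First I would assume $\sss$ is atomless; the general case then follows by the usual trick of passing to $\hsss = \sss\times\oi$ with the lexicographic order, since $\goxx(W,\prec)=\goxx(\hW,\hprec)$ and the marginal of $\hW$ is still monotone with respect to the lexicographic extension of $<$. Under the atomless assumption, \eqref{er} (already established in the proof of \refL{LT1}) gives
\begin{equation*}
\goxx(W,\prec)
=\iint \mu(U_x) W(x,z)\dd\mu(x)\dd\mu(z)
-\frac12\int \mW(z)^2\dd\mu(z),
\end{equation*}
where $U_x=\set{y:y\succ x}$. The second term is independent of $\prec$, so it suffices to compare the first term across orders.

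Next I would rewrite the first term using Fubini as an iterated integral depending only on $\mW$. Since $\int W(x,z)\dd\mu(z)=\mW(x)$ and $\mu(U_x)=\int\etta_{\set{x\prec y}}\dd\mu(y)$,
\begin{equation*}
\iint \mu(U_x) W(x,z)\dd\mu(x)\dd\mu(z)
=\iint_{x\prec y} \mW(x)\dd\mu(x)\dd\mu(y).
\end{equation*}
Now the assumption $x<y\implies\mW(x)\le\mW(y)$ means $f\=\mW$ is increasing for the order $<$, so \refL{LTA3rearr} applies and yields
\begin{equation*}
\iint_{x\prec y} \mW(x)\dd\mu(x)\dd\mu(y)
\ge \iint_{x<y} \mW(x)\dd\mu(x)\dd\mu(y)
\end{equation*}
for every measurable order $\prec$ on $\sss$.

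Combining these, $\goxx(W,\prec)\ge\goxx(W,<)$ for every measurable order $\prec$, and hence $\goxx(W,<)=\inf_\prec\goxx(W,\prec)=\goxx(W)$, as required. No step here looks like a genuine obstacle: the only subtle point is making sure the reduction to the atomless case preserves the hypothesis that the marginal is monotone under the chosen order, which is straightforward because integrating in the $\oi$-factor commutes with the $\sss$-marginal.
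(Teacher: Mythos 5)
Your argument is correct and is essentially the paper's own proof: both use formula \eqref{er} to write $\goxx(W,\prec)$ as $\iint_{x\prec y}\mW(x)\dd\mu(x)\dd\mu(y)$ minus an order-independent term, and then apply \refL{LTA3rearr} with $f=\mW$ to see that $<$ minimizes the first term. The reduction to the atomless case via $\hsss=\sss\times\oi$ with the lexicographic order is also the paper's standard device and you handle the preservation of the monotone-marginal hypothesis correctly.
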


\begin{proof}
We may again assume for simplicity that $\sss$ is atomless.
Let $\prec$ be any order on $\sss$. 
We again use \eqref{er}, which we write as
\begin{equation*}
  \goxx(W,\prec)=\ints\mu(U_x)\mW(x)\dd\mu(x)
-\frac12 \ints\mW(x)^2\dd\mu(x).
\end{equation*}
The second integral does not depend on $\prec$. Moreover, 
the first integral equals $\iint_{x\prec y}\mW(x)$, which by
\refL{LTA3rearr}
is minimized by taking $\prec$ equal to $<$.
Hence $\goxx(W,\prec)\ge\goxx(W,<)$, and the result follows.
\end{proof}

\begin{remark}
It follows by \eqref{goxxg} that the corresponding result holds for graphs
and $\goxb$: \ie, ordering the vertices by
their degrees achieves the minimum $\min_\prec\goxb(G,\prec)$.
\end{remark}

Our next result shows that $\goxx$
characterizes kernels that yield threshold graph limits.
Note the parallel and contrast to Theorems \refand{TM2}{TM2x}.

\begin{theorem}
  \label{TT2}
Let $W$
be a kernel on a \ps{} $\sss$ with at least one measurable order.
Then the following are equivalent. 
\begin{romenumerate}
\item \label{tt2-goxx}
$\goxx(W)=0$. 
\item \label{tt2-mono}
There exists an order $\prec$ on $\sss$ such that\/
$W$ is \aex{} equal to a \oivalued{} monotone kernel on $\sssp$.
\item \label{tt2-equiv}
$W$ is equivalent to a \oivalued{} monotone kernel on some \ops.
\item \label{tt2-oi}
$W$ is equivalent to a \oivalued{} monotone kernel on  $\oi$.
\item \label{tt2-gG}
$\gG_W$ is a threshold graph limit.
\end{romenumerate}
\end{theorem}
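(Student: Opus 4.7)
The plan is to establish the cycle (i)$\Rightarrow$(ii)$\Rightarrow$(iii)$\Rightarrow$(i), together with the equivalence (iii)$\Leftrightarrow$(iv)$\Leftrightarrow$(v), closing all five conditions. For (i)$\Rightarrow$(ii) I mimic the proof of Corollary~\ref{CA3}: pick any measurable order $\prec_0$ on $\sss$ (which exists by hypothesis), and define a new measurable order $\prec$ on $\sss$ by $x\prec y$ iff either $\mW(x)<\mW(y)$, or $\mW(x)=\mW(y)$ and $x\prec_0 y$. Lemma~\ref{LTA3} then gives $\goxx(W,\prec)=\goxx(W)=0$, and Theorem~\ref{TTx} yields a \oivalued{} monotone kernel on $(\sss,\prec)$ \aex{} equal to $W$. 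The implication (ii)$\Rightarrow$(iii) is immediate, taking the \ops{} to be $(\sss,\prec)$ itself.

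For (iii)$\Rightarrow$(i), suppose $W$ is equivalent to a \oivalued{} monotone kernel $W^*$ on some \ops. Then $\gG_W=\gG_{W^*}$ is both random-free (it admits a \oivalued{} representation) and monotone. By Remark~\ref{RLSz}, every representation of a random-free limit is \aex{} \oivalued, so $W$ itself is \aex{} \oivalued. Since $\gG_W$ is monotone, the implication (v)$\Rightarrow$(ii) of Theorem~\ref{TM2} supplies a measurable order $\prec$ on $\sss$ and a monotone kernel $W'$ on $(\sss,\prec)$ with $W=W'$ \aex; thus $W'$ is also \aex{} \oivalued, and replacing it with $\ett{W'>1/2}$ (which is still monotone and agrees with $W'$ \aex) we may take $W'$ itself to be \oivalued. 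Then $W'(x,z)\le W'(y,z)$ whenever $x\prec y$, so $W'(x,z)(1-W'(y,z))=0$ identically on $\{x\prec y\}$; the boundary term in the definition of $\goxx$ also vanishes, since $W'$ is \oivalued. Hence $\goxx(W)\le\goxx(W,\prec)=\goxx(W',\prec)=0$.

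For the remaining equivalences, (iv)$\Rightarrow$(iii) is trivial. For (iii)$\Rightarrow$(iv), Theorem~\ref{Tmono} produces a monotone kernel $W^{**}$ on $\oi$ equivalent to $W^*$ and hence to $W$; since $\gG_{W^{**}}$ is random-free, Remark~\ref{RLSz} forces $W^{**}$ to be \aex{} \oivalued, so $\ett{W^{**}>1/2}$ is a \oivalued{} monotone kernel on $\oi$ equivalent to $W$. The equivalence (iv)$\Leftrightarrow$(v) is precisely the characterization of threshold graph limits from \cite{SJ238} recalled in the introduction. The main difficulty this route avoids is establishing continuity of $\goxx$ under the cut metric (an analog of Lemma~\ref{Ljeppe}); this would be the natural direct approach to closing the cycle but is genuinely delicate, since Example~\ref{Ediscont} shows that $\gox$ is not continuous in the cut norm. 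Leveraging Theorem~\ref{TM2} together with Remark~\ref{RLSz} sidesteps the issue entirely.
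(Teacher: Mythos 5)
Your proof is correct and uses essentially the same ingredients as the paper's: the order constructed as in Corollary~\ref{CA3} together with Lemma~\ref{LTA3} and Theorem~\ref{TTx} for (i)$\Leftrightarrow$(ii), Theorem~\ref{TM2} (via Theorem~\ref{Tmono}) plus the random-free fact of Remark~\ref{RLSz} and the indicator trick for the equivalences with (iii) and (iv), and the result of \cite{SJ238} for (v). The only difference is cosmetic: you close the cycle via (iii)$\Rightarrow$(i), re-verifying by hand that $\goxx(W',\prec)=0$ for a \oivalued{} monotone kernel, where the paper instead quotes the easy direction of Theorem~\ref{TTx} for (ii)$\Rightarrow$(i) and chains (ii)$\Leftrightarrow$(iii)$\Leftrightarrow$(iv) through Theorem~\ref{TM2}.
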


\begin{proof}
\ref{tt2-goxx}$\implies$\ref{tt2-mono}.
There exists a measurable order $\prec_0$ on $\sss$. 
As in the proof of \refC{CA3}, we define an order 
$\prec$ on $\sss$ by \eqref{ca3}. \refL{LTA3} applies and yields
$\goxx(W,\prec)=\goxx(W)=0$, 
and the result follows by \refT{TTx}.

\ref{tt2-mono}$\implies$\ref{tt2-goxx}.
\refT{TTx} yields $\goxx(W,\prec)=0$ and thus $\goxx(W)\le\goxx(W,\prec)=0$.

\ref{tt2-mono}$\iff$\ref{tt2-equiv}$\iff$\ref{tt2-oi}.
Every kernel equivalent to an \aex{} \oivalued{} kernel is itself \aex{}
\oivalued, see \refR{RLSz} and \cite{SJ249}. Furthermore,
arguing as in the proof of \refT{TTx},
a monotone kernel $W$ that
is \aex{} \oivalued{} is \aex{} equal to the
\oivalued{} monotone kernel $\ett{W>0}$.
Hence, 
\ref{tt2-mono}$\iff$\ref{tt2-equiv}$\iff$\ref{tt2-oi} follows from the
corresponding equivalences in \refT{TM2}.

\ref{tt2-oi}$\iff$\ref{tt2-gG}.
As noted in the introduction, this was proved by \citet{SJ238}.
\end{proof}

We need some more preparation before the proof of \refT{TT}.

\begin{lemma}
  \label{LT2}
Let $W_1$ and $W_2$ be kernels on a probability space $\sss$ with $W_1$
\oivalued, and let $W_1'$ be a \oivalued{} step kernel with $n$ steps. Then
\begin{equation*}
  \norm{W_1-W_2}\qliss
\le n^2\cn{W_1-W_2} +2\norm{W_1-W_1'}\qliss.
\end{equation*}
\end{lemma}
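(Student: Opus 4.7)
The plan is to bound $\norm{W_1-W_2}\qliss$ via the triangle inequality
$$\norm{W_1-W_2}\qliss \le \norm{W_1-W_1'}\qliss + \norm{W_1'-W_2}\qliss,$$
so the work is all in bounding $\norm{W_1'-W_2}\qliss$ by $n^2\cn{W_1-W_2}+\norm{W_1-W_1'}\qliss$; the factor $2$ in the statement then appears from combining the two occurrences of $\norm{W_1-W_1'}\qliss$.

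The central observation I would exploit is that $W_1'$ is a $\cP$-step function for some partition $\cP=(A_i)_{i=1}^n$, and is $0/1$-valued on each rectangle $A_i\times A_j$. Writing $c_{ij}\in\{0,1\}$ for its value on $A_i\times A_j$, the function $W_1'-W_2$ has constant sign on each $A_i\times A_j$: if $c_{ij}=1$ then $W_1'-W_2=1-W_2\ge 0$, while if $c_{ij}=0$ then $W_1'-W_2=-W_2\le 0$. Consequently the absolute value can be pulled outside the integral, giving
$$\norm{W_1'-W_2}\qliss = \sum_{i,j}\Bigabs{\iint_{A_i\times A_j}(W_1'-W_2)\dd\mu\dd\mu}.$$

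Next I would write $W_1'-W_2=(W_1'-W_1)+(W_1-W_2)$ inside each integral and apply the triangle inequality for $|\cdot|$. For the first summand the bound is crude: $\sum_{i,j}\bigabs{\iint_{A_i\times A_j}(W_1'-W_1)}\le\sum_{i,j}\iint_{A_i\times A_j}|W_1'-W_1|=\norm{W_1'-W_1}\qliss$. For the second summand I would appeal directly to the definition \eqref{cutnorm} of the cut norm, with $f=\etta_{A_i}$, $g=\etta_{A_j}$ (which have $L^\infty$-norm at most $1$), to obtain $\bigabs{\iint_{A_i\times A_j}(W_1-W_2)}\le\cn{W_1-W_2}$ for every $(i,j)$. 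Summing over the $n^2$ pairs yields the factor $n^2$.

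Putting the two bounds together gives $\norm{W_1'-W_2}\qliss\le n^2\cn{W_1-W_2}+\norm{W_1-W_1'}\qliss$, and combining with the initial triangle inequality produces the claimed estimate. There is no serious obstacle here — the only non-routine ingredient is the sign-constancy observation in the second paragraph, which is what makes the $0/1$-valued hypothesis on $W_1'$ do work; without it one could not convert the local $L^1$ contributions into signed integrals bounded by the cut norm. The hypothesis that $W_1$ itself be $0/1$-valued is actually not used in this argument, although it is crucial for the applications in which $\norm{W_1-W_1'}\qliss$ will in turn be controlled.
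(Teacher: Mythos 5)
Your proof is correct and follows essentially the same route as the paper's: both arguments exploit that $W_1'-W_2$ has constant sign on each rectangle $A_i\times A_j$ (because $W_1'$ is $0/1$-valued and $W_2\in[0,1]$), bound each rectangle's contribution by $\cn{W_1-W_2}$ plus the local $L^1$ distance from $W_1$ to $W_1'$, sum over the $n^2$ rectangles, and finish with the triangle inequality. Your closing observation that the $0/1$-valuedness of $W_1$ itself is never used is also consistent with the paper's proof, which only needs $W_1,W_2\in[0,1]$.
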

\begin{proof}
  Let $\set{A_i}_1^n$ be a partition of $\sss$ such that $W_1'$ is constant
0 or 1  on each $A_i\times A_j$.

If $W_1'=0$ on $A_i\times A_j$, then
\begin{multline*}
  	\iint_{A_i\times A_j}|W_1'-W_2|
=	
  \iint_{A_i\times A_j}W_2
\le
 \cn{W_1-W_2}+ \iint_{A_i\times A_j}W_1
\\
=
  \cn{W_1-W_2}+\iint_{A_i\times A_j}|W_1-W_1'|.
\end{multline*}

If $W_1'=1$ on $A_i\times A_j$, then
{\multlinegap=0pt
\begin{multline*}
	\iint_{A_i\times A_j}|W_1'-W_2|
=	
  \iint_{A_i\times A_j}(1-W_2)
\le
 \cn{W_1-W_2}+ \iint_{A_i\times A_j}(1-W_1)
\\
=
 \cn{W_1-W_2}+ \iint_{A_i\times A_j}|W_1-W_1'|.
  \end{multline*}
 }
 
Thus, in both cases
$	\iint_{A_i\times A_j}|W_1'-W_2|
\le  \iint_{A_i\times A_j}|W_1-W_1'|+\cn{W_1-W_2}$,
and summing over all $i$ and $j$ yields
\begin{equation*}
\normll{W_1'-W_2}
\le  \normll{W_1-W_1'}+n^2\cn{W_1-W_2}.
\end{equation*}
The result follows by
$\normll{W_1-W_2}\le \normll{W_1-W_1'}+\normll{W_1'-W_2}$.
\end{proof}

\begin{lemma}
  \label{LT3}
Let $W$ and $W_1,W_2,\dots$ be kernels on a probability space $\sss$, and
assume that $W$ is \oivalued. Then $\cn{W_n-W}\to0$ as \ntoo{} if and only
if
$\normlss{W_n-W}\to0$.
\end{lemma}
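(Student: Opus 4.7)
The easy direction is immediate: for any kernels $W, W_n$ on $\sss$, we have $\cn{W_n-W} \le \norm{W_n-W}\qliss$, so $L^1$-convergence implies cut-norm convergence.

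For the nontrivial direction, the plan is to apply \refL{LT2}. That lemma says that if $W_1$ is $0/1$-valued and $W_1'$ is a $0/1$-valued step kernel with $n$ steps, then $\norm{W_1-W_2}\qliss \le n^2 \cn{W_1-W_2}+2\norm{W_1-W_1'}\qliss$. So suppose $\cn{W_n-W}\to0$, and fix $\eps>0$. The key step is to produce a $0/1$-valued step kernel $W'$ with some finite number of steps $k=k(\eps)$ such that $\norm{W-W'}\qliss<\eps/4$. Once this is done, \refL{LT2} gives
\begin{equation*}
 \norm{W_n-W}\qliss \le k^2\cn{W_n-W} + 2\norm{W-W'}\qliss < k^2 \cn{W_n-W} + \eps/2,
\end{equation*}
which is less than $\eps$ for all sufficiently large $n$, proving the result.

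The heart of the argument is thus the approximation of $W$ by a $0/1$-valued step kernel. Since $W$ is $0/1$-valued, $W = \etta_E$ for some symmetric measurable set $E \subseteq \sssq$. A standard measure-theoretic approximation says that the algebra generated by measurable rectangles is dense in the product $\sigma$-algebra in the sense of measure. Hence there is a finite disjoint union $F$ of sets of the form $B_i \times C_j$ (with $B_i, C_j$ measurable in $\sss$) such that $(\mu\times\mu)(E\setdiff F)<\eps/8$. Replacing $F$ by $F\cup F^T$ (where $F^T$ is $F$ with the coordinates swapped) we may take $F$ symmetric at the cost of a factor of $2$ in the measure. By taking the common refinement of the $B_i$'s and $C_j$'s (and their complements) we obtain a finite partition $\cP=\{A_1,\dots,A_k\}$ of $\sss$ such that $F$ is a union of sets $A_i\times A_j$; then $W'\=\etta_F$ is a $0/1$-valued $\cP$-step kernel with $\norm{W-W'}\qliss = (\muu)(E\setdiff F)<\eps/4$, as required.

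The main obstacle is really just this standard approximation step; the rest is assembly. Note that it is essential here that $W$ is $0/1$-valued: for general kernels the natural step approximation $W_\cP$ (averaged over blocks) is not $0/1$-valued, and indeed \refL{LT3} fails without the $0/1$-valued hypothesis, as \refE{Ediscont} shows.
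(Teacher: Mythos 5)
Your proof is correct and follows essentially the same route as the paper's: approximate the symmetric set underlying $W$ in measure by a (symmetrized) finite union of measurable rectangles to get a \oivalued{} step kernel close to $W$ in $L^1$, then apply \refL{LT2} and let $n\to\infty$. The only differences are cosmetic bookkeeping of the $\eps$'s and your more explicit symmetrization step, which the paper leaves implicit.
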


\begin{proof}
  Assume $\cn{W_n-W}\to0$.
$W$ is the indicator function $\etta_A$ of a measurable set $A\subseteq\sssq$. Any
  such set can be approximated in measure by a finite disjoint union of
  rectangle sets $\bigcup_i A_i\times B_i$, and we may assume that this set
  is symmetric since $A$ is; in other words, given any
  $\eps>0$, there exists a \oivalued{} step kernel $W'$ such that
  $\normll{W-W'}<\eps$. 
Let the corresponding partition have $N=N(\eps)$ parts.
\refL{LT2} then yields
\begin{equation*}
  \normll{W-W_n}\le N^2\cn{W-W_n}+2\eps\to2\eps
\end{equation*}
as \ntoo. Hence, $\limsup_\ntoo\normll{W-W_n}=0$.

The converse is obvious.
\end{proof}

\begin{proof}[Proof of \refT{TT}]

Note first that \ref{tt1} is equivalent to $\goxb(\gn)\to0$ by
\eqref{gox31}, and that $\goxb(\gn)=\goxx(W_{\gn})$ by \eqref{goxxg}. 

\ref{tt1}$\implies$\ref{ttt}.
Assume \ref{tt1} and consider a subsequence that converges.
We thus assume that there exists a graph limit $\gG$ with
$\gn\to\gG$. 
Let $W$ be a kernel on \oi{} representing $\gG$.

We have $\gn\to W$, and thus $\dcut(\wgn,W)\to0$.
Moreover, by \cite[Lemma 5.3]{BCLSV1} we may choose the labelling of the
vertices in $\gn$ such that 
\begin{equation}\label{ckw}
\cn{\wgn-W}\to0.
\end{equation}
This labelling yields an order $<$ on $V(\gn)$. 
Let $\prec$ be an order on $V(\gn)$ achieving the minimum in \eqref{gox} for
$\goxb(\gn)$, \ie, such that 
\begin{equation}\label{ckp}
\goxb(\gn,\prec)=\goxb(\gn)=o(1).  
\end{equation}

In general $\prec$ differs from $<$, but it clearly corresponds to some
order $\prec_\nu$ on $\oi$ and, by \eqref{goxxg} again,
\begin{equation}\label{ckoi}
 \goxb(\gn,\prec)=\gox(\wgn,\prec_\nu)=\goxx(\wgn,\prec_\nu). 
\end{equation}

By \refL{LT1} and \eqref{ckw}--\eqref{ckoi}, we then have
\begin{equation*}
  \goxx(W,\prec_\nu)\le \goxx(\wgn,\prec_\nu)+2\cn{W-\wgn}
\to0,
\end{equation*}
as \nutoo; hence $\goxx(W)=0$ and $\gG=\gG_W$ is a threshold graph limit by
\refT{TT2}.

\ref{ttt}$\implies$\ref{ttd}
Suppose that \ref{ttd} fails; then there exists $\eps>0$ and a subsequence 
for which $\de(\gn,\cT)>\eps|\gn|^2$. We may select a subsubsequence such that
$\gn$ converges; we shall show that \ref{ttt} implies \ref{ttd} in this
case, which yields a contradiction.

Suppose then that $\gn\to\gG$
for some graph limit $\gG$, and that \ref{ttt} holds.  By assumption, $\gG$ is a threshold graph
limit.
Let $W$ be a kernel on $\oi$ representing $\gG$. By the result of
\citet{SJ238} discussed in the introduction, we may choose $W$ to be
monotone and \oivalued.

We have $\gn\to W$, and thus $\dcut(\wgn,W)\to0$.
As above, by \cite[Lemma 5.3]{BCLSV1} we may choose the labelling of the
vertices in $\gn$ such that $\cn{\wgn-W}\to0$.
By \refL{LT3}, this implies $\normll{\wgn-W}\to0$.

Since, by assumption, $\gG$ is a threshold graph limit, there exists a
sequence of threshold graphs $\gn'$ such that $\gn'\to\gG$, and we may
further assume that $|\gn'|=|\gn|$. (For example, we may \as{} take $\gn'$
as the random graph $G(n_\nu,W)$ with $n_\nu=|\gn|$.)
Then also  $\dcut(\wgni,W)\to0$, and by
\cite[Lemma 5.3]{BCLSV1} again we may choose the labelling of the
vertices in $\gn'$ such that $\cn{\wgni-W}\to0$, and thus by \refL{LT3}
$\normll{\wgni-W}\to0$.
Consequently,
\begin{equation*}
  \normll{\wgn-\wgni}
\le   \normll{\wgn-W} +   \normll{W-\wgni}\to0.
\end{equation*}

We may identify the vertex sets of $\gn$ and $\gn'$.
Then
\begin{equation*}
  \de(\gn,\cT)\le 
\bigabs{E(\gn)\setdiff E(\gn')}
=\tfrac12|\gn|^2\normll{\wgn-\wgni}
=o(|\gn|^2).
\end{equation*}

\ref{ttd}$\iff$\ref{tte} by the definition \eqref{dea}.  

\ref{tte}$\iff$\ref{ttl1} by
\begin{equation*}
 \norm{W_{\gn}-W_{\gn'}}\qliss
=2\abs{\gn}^{-2} \bigabs{E(\gn)\setdiff E(\gn')}.
\end{equation*}

\ref{ttl1}$\implies$\ref{ttcn} since $\cn{\cdot}\le\norm{\cdot}\qliss$.

\ref{ttcn}$\implies$\ref{tt1}.
Let $<$ be the order on $V(\gn)=V(\gn')$ defined by the degrees of the
vertices in $\gn'$.
Then, since $\gn'$ is a threshold graph, 
$N_{\gn'}(v)\subseteq N_{\gn'}(w)\cup\set w$
  whenever $v<w$, and thus $\goxa(\gn',<)=0$ by \eqref{gox30}.

By \eqref{gox31}, \eqref{goxxg} and \refL{LT1},
\begin{equation*}
  \begin{split}
  \goxa(\gn,<)
&
=   \goxa(\gn,<)-  \goxa(\gn',<)
=   \goxb(\gn,<)-  \goxb(\gn',<)+o(1)
\\&
=\goxx(\wgn,<)	-\goxx(\wgni,<)	+o(1)
\\&
\le 2\cn{\wgn-\wgni} +o(1)
=o(1).
  \end{split}
\end{equation*}
Hence  $\goxa(\gn)\to0$.
\end{proof}

\newcommand\AAP{\emph{Adv. Appl. Probab.} }
\newcommand\JAP{\emph{J. Appl. Probab.} }
\newcommand\JAMS{\emph{J. \AMS} }
\newcommand\MAMS{\emph{Memoirs \AMS} }
\newcommand\PAMS{\emph{Proc. \AMS} }
\newcommand\TAMS{\emph{Trans. \AMS} }
\newcommand\AnnMS{\emph{Ann. Math. Statist.} }
\newcommand\AnnPr{\emph{Ann. Probab.} }
\newcommand\CPC{\emph{Combin. Probab. Comput.} }
\newcommand\JMAA{\emph{J. Math. Anal. Appl.} }
\newcommand\RSA{\emph{Random Struct. Alg.} }
\newcommand\ZW{\emph{Z. Wahrsch. Verw. Gebiete} }
\newcommand\DMTCS{\jour{Discr. Math. Theor. Comput. Sci.} }

\newcommand\AMS{Amer. Math. Soc.}
\newcommand\Springer{Springer-Verlag}
\newcommand\Wiley{Wiley}

\newcommand\vol{\textbf}
\newcommand\jour{\emph}
\newcommand\book{\emph}
\newcommand\inbook{\emph}
\def\no#1#2,{\unskip#2, no. #1,} 
\newcommand\toappear{\unskip, to appear}

\newcommand\webcite[1]{
\texttt{\def~{{\tiny$\sim$}}#1}\hfill\hfill}
\newcommand\webcitesvante{\webcite{http://www.math.uu.se/~svante/papers/}}
\newcommand\arxiv[1]{\webcite{arXiv:#1.}}

\def\nobibitem#1\par{}

\end{document}